\setlist[enumerate,1]{label={\upshape(\arabic*)}}
\setlist[enumerate,2]{label={\upshape(\alph*)}}
\tikzset{vertex/.style={circle,fill=black,inner sep=1.3pt,outer sep=2pt},
         mvertex/.style={rectangle,draw=black,thick,inner sep=2pt,outer sep=2pt},
         cvertex/.style={circle,fill=white,draw=white,thick,inner sep=.5pt,outer sep=.5pt},
         cver/.style={circle,fill=white,draw=black,thick,inner sep=3pt,outer sep=2pt},
         rver/.style={rectangle,fill=white,draw=black,thick,inner sep=2pt,outer sep=2pt},
         tvertex/.style={inner sep=1pt,font=\upshape},
         unvertex/.style={circle,fill=white,draw=white,inner sep=1pt},
         fill1/.style={fill=black!20,draw=black!20},
         fill2/.style={fill=black!40,draw=black!40},
         fill12/.style={fill=black!60,draw=black!60},
         >=stealth',
         leadsto/.style={-angle 90,decorate,decoration=snake,very thick},
         cut/.style={decorate,decoration=saw,very thick}}
\newtheorem{theorem}{Theorem}[section]
\newtheorem{theoremi}{Theorem}
\newtheorem{corollary}[theorem]{Corollary}
\newtheorem{lemma}[theorem]{Lemma}
\newtheorem{proposition}[theorem]{Proposition}
\newtheorem{definition-proposition}[theorem]{Definition-Proposition}
\theoremstyle{definition}
\newtheorem{definition}[theorem]{Definition}
\newtheorem{remark}[theorem]{Remark}
\newtheorem{example}[theorem]{Example}
\newcommand{\CC}{\mathcal{C}}
\newcommand{\DD}{\mathcal{D}}
\newcommand{\KKK}{\mathsf{K}}
\newcommand{\GGG}{\mathsf{G}}
\newcommand{\PP}{\mathcal{P}}
\newcommand{\II}{\mathcal{I}}
\newcommand{\JJ}{\mathcal{J}}
\renewcommand{\SS}{\mathcal{S}}
\newcommand{\XX}{\mathcal{X}}
\newcommand{\XXX}{\mathsf{X}}
\newcommand{\R}{\mathbb{R}}
\newcommand{\rad}{\operatorname{rad}\nolimits}
\newcommand{\Ext}{\operatorname{Ext}\nolimits}
\newcommand{\Hom}{\operatorname{Hom}\nolimits}
\newcommand{\End}{\operatorname{End}\nolimits}
\newcommand{\gl}{\operatorname{gl.\!dim}\nolimits}
\newcommand{\height}{\operatorname{ht}\nolimits}
\newcommand{\dm}{\operatorname{dom.\!dim}\nolimits}
\newcommand{\op}{\operatorname{op}\nolimits}
\newcommand{\RHom}{\mathbf{R}\strut\kern-.2em\operatorname{Hom}\nolimits}
\newcommand{\Image}{\operatorname{Im}\nolimits}
\newcommand{\Kernel}{\operatorname{Ker}\nolimits}
\newcommand{\Cokernel}{\operatorname{Coker}\nolimits}
\newcommand{\Ab}{\mathcal{A}b}
\newcommand{\coker}{\Cokernel}
\newcommand{\im}{\Image}
\renewcommand{\ker}{\Kernel}
\newcommand{\un}{\underline}
\newcommand{\ov}{\overline}
\DeclareMathOperator{\moduleCategory}{\mathsf{mod}} \renewcommand{\mod}{\moduleCategory}
\DeclareMathOperator{\Mod}{\mathsf{Mod}}
\DeclareMathOperator{\proj}{\mathsf{proj}}
\DeclareMathOperator{\ind}{\mathsf{ind}}
\DeclareMathOperator{\Filt}{\mathsf{Filt}}
\DeclareMathOperator{\Ex}{\mathsf{Ex}}
\DeclareMathOperator{\AR}{\mathsf{AR}}
\DeclareMathOperator{\CM}{\mathsf{CM}}
\DeclareMathOperator{\GP}{\mathsf{GP}}
\DeclareMathOperator{\add}{\mathsf{add}}
\DeclareMathOperator{\fl}{\mathsf{f.\!l.}}
\DeclareMathOperator{\pd}{\mathsf{pd}}
\DeclareMathOperator{\id}{\mathsf{id}}
\DeclareMathOperator{\noeth}{\mathsf{noeth}}
\DeclareMathOperator{\art}{\mathsf{art}}
\newcommand{\iso}{\cong}
\newcommand{\infl}{\rightarrowtail}
\newcommand{\defl}{\twoheadrightarrow}
\newcommand{\equi}{\simeq}
\newenvironment{sbmatrix}{\left[\begin{smallmatrix}}{\end{smallmatrix}\right]}
\newsavebox\locboxinminipage
\newlength\locboxinminipagel
\newcommand{\EE}{\mathcal{E}}
\numberwithin{equation}{section}
\newcommand{\inflr}{\ar@{ >->}[r]}
\newcommand{\infld}{\ar@{ >->}[d]}
\newcommand{\deflr}{\ar@{->>}[r]}
\newcommand{\defld}{\ar@{->>}[d]}
\begin{document}
\title[Classifications of exact structures]{Classifications of exact structures and\\ Cohen-Macaulay-finite algebras}

\author[H. Enomoto]{Haruhisa Enomoto}
\address{Graduate School of Mathematics, Nagoya University, Chikusa-ku, Nagoya. 464-8602, Japan}
\email{m16009t@math.nagoya-u.ac.jp}
\subjclass[2010]{18E10, 16G10, 18E05}
\keywords{exact category; Grothendieck group; CM-finite Iwanaga-Gorenstein algebra; cotilting module}
\begin{abstract}
 We give a classification of all exact structures on a given idempotent complete additive category. Using this, we investigate the structure of an exact category with finitely many indecomposables. We show that the relation of the Grothendieck group of such a category is generated by AR conflations. Moreover, we obtain an explicit classification of (1) Gorenstein-projective-finite Iwanaga-Gorenstein algebras, (2) Cohen-Macaulay-finite orders, and more generally, (3) cotilting modules $U$ with $^\perp U$ of finite type. In the appendix, we develop the AR theory of exact categories over a noetherian complete local ring, and relate the existence of AR conflations to the AR duality and dualizing varieties.
\end{abstract}

\maketitle

\tableofcontents

\section{Introduction}
In the representation theory of finite-dimensional algebras, one of the most important subjects is to \emph{classify certain categories of finite type}. Here we say that an additive $k$-category over a field $k$ is \emph{of finite type} if it has only finitely many indecomposable objects up to isomorphism. The aim of this paper is to give a classification of exact categories of finite type, and thereby provide an explicit classification of all GP-finite Iwanaga-Gorenstein algebras. Let us explain the motivation for this.

First we recall how categories of finite type have been studied in the representation theory. It is well-known that an abelian Hom-finite $k$-category of finite type is nothing but the category $\mod\Lambda$ of finitely generated $\Lambda$-modules over some representation-finite $k$-algebra $\Lambda$ (see \cite[8.2]{gabind} or Proposition \ref{enough} below). A classification of such algebras is one of the main problems in the representation theory of algebras, and has been studied widely by a number of papers, e.g. \cite{gabriel,riedtmann,bgrs,gr}.
For the case of representation-finite $R$-orders over a noetherian local ring $R$, we refer the reader to \cite{artin,dk,hn,lw,rvdb2,yo}.
Besides abelian categories, triangulated categories of finite type also has been investigated, e.g. in \cite{amiot}. Such triangulated categories naturally arise in the representation of algebras and in the categorification of cluster algebras.

Among other things, the observation by Auslander \cite{repdim} is of particular importance to us when we deal with categories of finite type. Let $\EE$ be a Hom-finite $k$-category of finite type and consider the algebra $\Gamma := \End_\EE(M)$, where $M$ is a direct sum of all non-isomorphic indecomposables in $\EE$. This $\Gamma$ is called an \emph{Auslander algebra} of $\EE$, and categorical properties of $\EE$ should be related to homological properties of $\Gamma$. For example, the condition $\EE$ being abelian is equivalent to a certain homological condition of $\Gamma$, that is, $\gl\Gamma \leq 2 \leq \dm \Gamma$. This is called the \emph{Auslander correspondence}, and is now the basic and important viewpoint in the representation theory.

However, most of the algebras are representation-wild, and it is hopeless to understand the whole structure of the module category. Thus nice subcategories of module categories has attracted much attention. For example, in the representation theory of a commutative noetherian local ring $R$, the category $\CM R$ of Cohen-Macaulay modules has played a central role. For an Iwanaga-Gorenstein ring (two-sided noetherian ring satisfying $\id(\Lambda_\Lambda) = \id ({}_\Lambda \Lambda ) < \infty$), we have the natural notion of Cohen-Macaulay $\Lambda$-modules (which we call \emph{Gorenstein-projective} $\Lambda$-modules to avoid any confusion). In the spirit of Auslander's observation, it is natural to ask whether there exists an Auslander-type correspondence for \emph{GP-finite} Iwanaga-Gorenstein algebras (algebras $\Lambda$ such that $\GP \Lambda$ is of finite type). In this paper, we give an answer to this problem.

It is known that an algebra $\Gamma$ is the GP-Auslander-algebra of some Iwanaga-Gorenstein algebra if and only if the global dimension of $\Gamma$ is finite (see e.g. Proposition \ref{eno} below or \cite[Proposition 4]{kalck}).
In fact, if $\Gamma$ has finite global dimension, then $\Gamma$ is Iwanaga-Gorenstein and satisfies $\GP \Gamma = \proj\Gamma$. Thus $\Gamma$ is GP-finite and $\Gamma$ is its own GP-Auslander algebra. However, there may exist another GP-finite algebra $\Lambda$ which satisfies $\GP\Lambda \equi \GP\Gamma$ (consider for example a representation-finite selfinjective algebra $\Lambda$ and its usual Auslander algebra $\Gamma$). Therefore, the GP-Auslander-algebra alone is not enough to classify GP-finite Iwanaga-Gorenstein algebras.

In Theorem \ref{theorema}, we classify GP-finite Iwanaga-Gorenstein algebras by algebras with finite global dimension together with their modules satisfying a certain condition, given as follows.
\begin{definition}\label{2regdef}
Let $\Gamma$ be a two-sided noetherian ring and $S$ a simple right $\Gamma$-module. We say that $S$ satisfies the \emph{$2$-regular condition} if the following conditions are satisfied.
\begin{enumerate}
\item $\pd S_\Gamma = 2$.
\item $\Ext^i_\Gamma (S,\Gamma) =0$ for $i =0,1$.
\item $\Ext^2_\Gamma (S,\Gamma)$ is a simple left $\Gamma$-module.
\end{enumerate}
\end{definition}
This condition is satisfied by the simple modules over commutative regular local rings of dimension $2$, and their non-commutative analogues, e.g. Artin-Schelter regular algebras, Calabi-Yau algebras, non-singular orders of dimension $2$.

For a finite-dimensional algebra $\Gamma$, we interpret the $2$-regular condition in terms of the translation quiver $Q(\Gamma)$. This is the usual quiver of $\Gamma$ together with dotted arrows corresponding to the simple $\Gamma$-modules with the $2$-regular condition (Definition \ref{const}). Using these concepts, we obtain the following classification, where a $\Gamma$-module is called \emph{basic} if its indecomposable direct summands are pairwise non-isomorphic.
\begin{theoremi}[\textup{= Corollary \ref{main2}}]\label{theorema}
There exists a bijection between the following for a field $k$.
\begin{enumerate}
\item Morita equivalence classes of GP-finite Iwanaga-Gorenstein finite-dimensional $k$-algebras $\Lambda$.

\item Equivalence classes of pairs $(\Gamma,M)$, where $\Gamma$ is a finite-dimensional $k$-algebra with finite global dimension and $M$ is a basic semisimple $\Gamma$-module such that every simple summand of $M$ satisfies the $2$-regular condition and $\Hom_k(M,k) \iso \Ext_\Gamma^2(M,\Gamma)$ holds as left $\Gamma$-modules.

\item Equivalence classes of pairs $(\Gamma,\XXX)$, where $\Gamma$ is a finite-dimensional $k$-algebra with finite global dimension and $\XXX$ is a union of stable $\tau$-orbits in $Q(\Gamma)$.
\end{enumerate}
For a pair $(\Gamma, \XXX)$ in $(3)$, the corresponding algebra $\Lambda$ in $(1)$ is given by the endomorphism ring of the direct sum of the indecomposable projective $\Gamma$-modules which does not belong to $\tau$-orbits in $\XXX$.
\end{theoremi}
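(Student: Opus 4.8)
The plan is to deduce this as the special case $U = {}_{\Lambda}\Lambda$ of the general classification of cotilting modules $U$ with ${}^{\perp}U$ of finite type, using the standard fact that a finite-dimensional algebra $\Lambda$ is Iwanaga-Gorenstein precisely when ${}_{\Lambda}\Lambda$ is a cotilting $\Lambda$-module, in which case ${}^{\perp}\Lambda = \GP\Lambda$ inside $\mod\Lambda$. So the bulk of the work is the general statement, and it then remains only to check that for $U = {}_{\Lambda}\Lambda$ the data specialize to Iwanaga-Gorensteinness and to the bijections $(1)\leftrightarrow(2)\leftrightarrow(3)$.

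The first ingredient is an Auslander-type correspondence at the level of exact categories. If $\Lambda$ is GP-finite Iwanaga-Gorenstein, then $\EE := \GP\Lambda$ with the exact structure inherited from $\mod\Lambda$ is a $\Hom$-finite, idempotent complete, Krull--Schmidt exact category of finite type, and it is Frobenius with $\proj\EE = \add({}_{\Lambda}\Lambda)$. Choosing an additive generator $M$ of $\EE$ and setting $\Gamma := \End_{\Lambda}(M)$, the result quoted above gives $\gl\Gamma < \infty$, and $\Hom_{\Lambda}(M,-)$ is an equivalence $\EE \equi \proj\Gamma$ carrying the exact structure of $\EE$ to a Frobenius exact structure on $\proj\Gamma$ whose projective-injective objects form $\add\Hom_{\Lambda}(M,\Lambda)$, with $\End_{\Gamma}\Hom_{\Lambda}(M,\Lambda) \iso \Lambda$. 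Since $\proj\Gamma$ has only finitely many indecomposables and admits weak kernels, the paper's classification of exact structures applies to it. Conversely, from $\Gamma$ with $\gl\Gamma < \infty$ equipped with a Frobenius exact structure on $\proj\Gamma$ one forms $\Lambda := \End_{\Gamma}(\PP)$, with $\PP$ an additive generator of the subcategory of projective-injectives; the reverse direction then demands that this $\Lambda$ be Iwanaga-Gorenstein and that $\GP\Lambda$ be equivalent, as an exact category, to $\proj\Gamma$ with the given structure.

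The second step is to translate the exact structures on $\proj\Gamma$ into combinatorics on $Q(\Gamma)$. By the Grothendieck-group theorem of the paper an exact structure of finite type is controlled by its AR conflations, so it is enough to describe these. Transported through the equivalence above, an AR conflation in $\proj\Gamma$ has the form $0 \to P_2 \to P_1 \to P_0$, the truncation of the minimal projective resolution of a simple $\Gamma$-module $S$, and the three clauses of Definition \ref{2regdef} are precisely what is needed for such a complex to be an AR conflation along which $P_0$ and $P_2$ are projective-injective and $P_2$ is the inflation of an indecomposable: (1) fixes the length; (2) says the resolution remains a projective resolution after $\Hom_{\Gamma}(-,\Gamma)$, now of $\Ext^2_{\Gamma}(S,\Gamma)$, which forces $P_0, P_2$ into the projective-injective part; (3) makes $P_2$ indecomposable, hence the conflation almost split. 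Thus an exact structure on $\proj\Gamma$ generated by AR conflations is the same as a set of $2$-regular simples; the operation $S \mapsto \Ext^2_{\Gamma}(S,\Gamma)$ on $2$-regular simples is the translation $\tau$ recorded by the dotted arrows of $Q(\Gamma)$, and requiring the resulting exact category to be Frobenius (equivalently, $\Lambda = \End_{\Gamma}(\PP)$ Iwanaga-Gorenstein) forces this set to be closed under $\tau^{\pm 1}$, i.e.\ a union of stable $\tau$-orbits; its module-theoretic shadow is the identity $\Hom_k(M,k) \iso \Ext^2_{\Gamma}(M,\Gamma)$. The equivalence of $(2)$ and $(3)$ --- bases of semisimple modules versus sets of vertices, the $2$-regular condition versus the presence of dotted arrows, $\tau$-stability versus $\tau$-closedness --- is then a matter of unwinding Definitions \ref{2regdef} and \ref{const}, and chasing $\PP$ back through $\GP\Lambda \equi \proj\Gamma$ identifies the projective-injectives with the indecomposable projective $\Gamma$-modules off the $\tau$-orbits in $\XXX$, giving the formula $\Lambda = \End_{\Gamma}(\bigoplus_{P \notin \XXX} P)$.

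The step I expect to be the main obstacle is the reverse direction above: given $\Gamma$ with $\gl\Gamma < \infty$ and a Frobenius exact structure on $\proj\Gamma$ satisfying the combinatorial condition, showing that $\Lambda := \End_{\Gamma}(\PP)$ really is Iwanaga-Gorenstein with $\GP\Lambda \equi \proj\Gamma$ as exact categories. This is where one has to bound both $\id({}_{\Lambda}\Lambda)$ and $\id(\Lambda_{\Lambda})$ and check that ${}^{\perp}\Lambda = \GP\Lambda$ has exactly the prescribed indecomposables and conflations; in particular it is here that the precise condition $\Hom_k(M,k) \iso \Ext^2_{\Gamma}(M,\Gamma)$, as opposed to the weaker ``$\XXX$ is $\tau$-closed'', has to be identified as the right one, via an analysis of the bimodule relating the $k$-dual to $\Ext^2_{\Gamma}(-,\Gamma)$.
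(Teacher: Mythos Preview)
Your overall strategy matches the paper's: specialize the general cotilting classification (Theorem~\ref{main1}) to $U=\Lambda$ via Proposition~\ref{gorencotilt}, and then identify the Frobenius condition on the induced exact structure on $\proj\Gamma$ with the combinatorial condition on $\XXX$. Two points deserve correction, and the obstacle you flag is not one.

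First, the sentence ``by the Grothendieck-group theorem of the paper an exact structure of finite type is controlled by its AR conflations'' misattributes the relevant result. Theorem~\ref{groth} concerns relations in $\KKK_0$, not the classification of exact structures. What you need is Theorem~\ref{2regularcorresp} (or Corollary~\ref{dotted}): over an artinian $\Gamma$, exact structures on $\proj\Gamma$ biject with sets of $2$-regular simples, equivalently sets of dotted arrows in $Q(\Gamma)$. Second, your reading of the $2$-regular clauses is backwards: if $S$ lies in the chosen set $\SS$, then its projective cover $P_0$ is precisely \emph{not} projective in the resulting exact structure (Proposition~\ref{proj}), and dually $\tau P_0$ is not injective. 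The correct translation, as in the paper's proof of Corollary~\ref{main2}, is: an indecomposable is non-projective iff it is the source of some dotted arrow in $\XXX$, and non-injective iff it is the target of one; hence Frobenius $\Leftrightarrow$ (sources of $\XXX$) $=$ (targets of $\XXX$) $\Leftrightarrow$ $\XXX$ is a union of stable $\tau$-orbits. The condition $\Hom_k(M,k)\iso\Ext^2_\Gamma(M,\Gamma)$ in (2) is exactly this $\tau$-stability rephrased module-theoretically, not a strictly stronger hypothesis.

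The ``reverse direction'' you worry about is handled by a black box: Proposition~\ref{eno}, quoted from \cite{en}, says that an exact category $(\proj\Gamma,F)$ with enough projectives, enough injectives, and $\gl\Gamma<\infty$ is automatically exact-equivalent to ${}^\perp U$ for the cotilting module $U=\Hom_\Gamma(P,I)$ over $\Lambda=\End_\Gamma(P)$. Over a field, enough projectives and injectives are automatic (Corollary~\ref{projinj}(1)), so no further bimodule analysis is needed; the Iwanaga-Gorensteinness of $\Lambda$ then follows from Proposition~\ref{gorencotilt}(2) once $F$ is Frobenius.
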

As Example \ref{example1} illustrates, it provides a systematic method to construct a family of GP-finite Iwanaga-Gorenstein algebras.

To prove Theorem \ref{theorema}, we need the notion of \emph{exact categories}, which was introduced by Quillen \cite{qu} as a generalization of abelian categories. It provides an appropriate framework for a relative homological algebra and has a number of applications in many branches of mathematics, such as representation theory, algebraic topology and functional analysis.
Let us explain why the use of exact structures is essential in our classification. In the classical situation, we can recover $\Lambda$ (up to Morita equivalence) only from the additive structure of $\mod\Lambda$, while this fails to be true for $\GP \Lambda$ as mentioned above. Even in this case, $\Lambda$ can be recovered from the \emph{exact structure} of $\GP \Lambda$, as the endomorphism ring of the progenerator of it. In Theorem \ref{theorema}, the algebra $\Gamma$ gives the additive structure of $\GP \Lambda$, while the module $M$ or the set $\XXX$ gives the exact structure on it. In this setting, the quiver $Q(\Gamma)$ with dotted arrow $\XXX$ is nothing but the Auslander-Reiten quiver of $\GP \Lambda$.

\begin{example}\label{example1}
Let $\Gamma$ be the algebra given by the quiver in Figure \ref{cap1},
where we identify two vertical arrows, with commutativity and zero relations indicated by dotted lines. Then $\Gamma$ has finite global dimension and the translation quiver $Q(\Gamma)$ is the same as Figure \ref{cap1}. It has two stable $\tau$-orbits $A$ and $B$, hence we obtain four GP-finite Iwanaga-Gorenstein algebras, corresponding to the endomorphism rings of vertices which does not belong to $\XXX$. Table \ref{cap2} is an explicit calculation of all GP-finite Iwanaga-Gorenstein algebras $\Lambda$ such that $\GP \Lambda$ is equivalent to $\proj \Gamma$.
\begin{figure}[h]
\centering

\begin{tikzpicture}[scale=.65,yscale=-.8]

 \draw [->, very thick] (0,3.5) -- (0,-0.5);
 \draw [->, very thick] (6,3.5) -- (6,-0.5);

 \draw [dashed] (2,3) -- (4,3);
 \draw [dashed] (0,2) -- (6,2);
 \draw [dashed] (0,1) -- (6,1);

 \node (213) at (0,1) [cvertex] {1};
 \node (3213) at (1,0) [cvertex] {2};
 \node (21) at (1,2) [cvertex] {3};
 \node (321) at (2,1) [cvertex] {4};
 \node (2) at (2,3) [cvertex] {5};
 \node (1321) at (3,0) [cvertex] {6};
 \node (32) at (3,2) [cvertex] {7};
 \node (132) at (4,1) [cvertex] {8};
 \node (3) at (4,3) [cvertex] {9};
 \node (2132) at (5,0) [cvertex] {10};
 \node (13) at (5,2) [cvertex] {11};
 \node (n213) at (6,1) [cvertex] {1};

 \node at (-3,1) {(stable $\tau$-orbit $A$)};
 \node at (-3,2) {(stable $\tau$-orbit $B$)};
 \node at (-3.5,3) {(non-stable $\tau$-orbit $C$)};

 \draw [->] (21) -- (2);
 \draw [->] (2) -- (32);
 \draw [->] (32) -- (3);
 \draw [->] (3) -- (13);
 \draw [->] (213) -- (21);
 \draw [->] (21) -- (321);
 \draw [->] (321) -- (32);
 \draw [->] (32) -- (132);
 \draw [->] (132) -- (13);
 \draw [->] (13) -- (n213);
 \draw [->] (213) -- (3213);
 \draw [->] (3213) -- (321);
 \draw [->] (321) -- (1321);
 \draw [->] (1321) -- (132);
 \draw [->] (132) -- (2132);
 \draw [->] (2132) -- (n213);
\end{tikzpicture}
\caption{The quiver of $\Gamma$ (or equivalently, $\proj\Gamma$).}\label{cap1}
\end{figure}

\begin{table}[t]

\begin{tabular}{c|c|c}
$\XXX$ & Quivers for $\Lambda$ & Relations for $\Lambda$ \\ \hline\hline
$\varnothing$ & $\Gamma$ in Figure \ref{cap1} & The same relation as $\Gamma$. \\ \hline
$A \cup B$ &
\begin{tikzpicture}[scale=.64,yscale=-1,baseline=(current bounding box.center)]

 \node (9) at (-.5,.5) [tvertex] {9};
 \node (2) at (1,.5) [tvertex] {2};
 \node (6) at (2,-.5) [tvertex] {6};
 \node (10) at (3,.5) [tvertex] {10};
 \node (5) at (4.5,.5) [tvertex] {5};
 \node  at (2,-1) [tvertex] {};

 \draw [->] (2) -- (6) node [near start,above] {$a$};
 \draw [->] (6) -- (10) node [near end,above] {$b$};
 \draw [->] (10) -- (2) node [midway,below] {$c$};
 \draw [->] (2) to [bend left] node [midway,above] {$d$} (9) ;
 \draw [->] (9) to [bend left] node [midway,below] {$e$}(2);
 \draw [->] (10) to [bend right] node [midway,above] {$f$}(5);
 \draw [bend right,->] (5) to [bend right] node [midway,below] {g} (10);

\end{tikzpicture}
 &
 $\begin{aligned}[c]
 c b a &= e d, \\
 b a c &= g f, \\
 f b = a e = a c b = d c &= f g = c g = d e = 0.
 \end{aligned}$
 \\ \hline
$A$ &
\begin{tikzpicture}[scale=.6,yscale=-.8,baseline=(current bounding box.center)]

 \draw [->, very thick] (0,3) -- (0,-.5);
 \draw [->, very thick] (6,3) -- (6,-.5);

 \node (2) at (0,0.5) [cvertex] {2};
 \node (3) at (0,2) [cvertex] {3};
 \node (5) at (1,3) [cvertex] {5};
 \node (6) at (2,0.5) [cvertex] {6};
 \node (7) at (2,2) [cvertex] {7};
 \node (9) at (3,3) [cvertex] {9};
 \node (10) at (4,0.5) [cvertex] {10};
 \node (11) at (4,2) [cvertex] {11};
 \node (n2) at (6,0.5) [cvertex] {2};
 \node (n3) at (6,2) [cvertex] {3};

 \draw [->] (2) -- (7);
 \draw [->] (7) -- (10);
 \draw [->] (10) -- (n3);
 \draw [->] (3) -- (6);
 \draw [->] (6) -- (11);
 \draw [->] (11) -- (n2);
 \draw [->] (3) -- (5);
 \draw [->] (5) -- (7);
 \draw [->] (7) -- (9);
 \draw [->] (9) -- (11);
 \draw [->] (2) -- (6);
 \draw [->] (6) -- (10);
 \draw [->] (10) -- (n2);

\end{tikzpicture}
 &
 $\begin{aligned}
 \text{Obvious commutative relations,}& \text{ and the zero relations of} \\
 5 \to 7 \to & 9, \\
 \text{two paths of length two}&\text{ from $11$,} \\
 \text{two paths of length two}&\text{ to $3$.}
 \end{aligned}$
 \\ \hline
$B$ &
\begin{tikzpicture}[scale=.6,yscale=-.8,baseline=(current bounding box.center)]

 \draw [->, very thick] (0,3) -- (0,-0.5);
 \draw [->, very thick] (6,3) -- (6,-0.5);

 \node (1) at (0,1) [cvertex] {1};
 \node (2) at (1,0) [cvertex] {2};
 \node (4) at (2,1) [cvertex] {4};
 \node (5) at (2,2.5) [cvertex] {5};
 \node (6) at (3,0) [cvertex] {6};
 \node (8) at (4,1) [cvertex] {8};
 \node (9) at (4,2.5) [cvertex] {9};
 \node (10) at (5,0) [cvertex] {10};
 \node (n1) at (6,1) [cvertex] {1};

 \draw [->] (1) -- (2);
 \draw [->] (2) -- (4);
 \draw [->] (4) -- (6);
 \draw [->] (6) -- (8);
 \draw [->] (8) -- (10);
 \draw [->] (10) -- (n1);
 \draw [->] (1) -- (5);
 \draw [->] (5) -- (8);
 \draw [->] (4) -- (9);
 \draw [->] (9) -- (n1);

\end{tikzpicture}
 & $\begin{aligned}
 \text{Obvious commutative relations,}& \text{ and the zero relations of} \\
 5 \to 8 \to &10 \to 1, \\
 1 \to 2 \to &4 \to 9, \\
 \text{two paths of length three and}&\text{ four from $8$ to $5$ and $4$,} \\
 \text{two paths of length two and}&\text{ three from $9$ to $5$ and $4$.}
 \end{aligned}$  \\ \hline
\end{tabular}
\caption{All CM-finite Iwanaga-Gorenstein algebras with its CM category $\proj\Gamma$.}\label{cap2}
\end{table}

\end{example}

To deal with exact structures, we mainly work with idempotent complete additive categories instead of categories of finite type. For such a category $\EE$, we classify all exact structures on $\EE$ in terms of  its functor category $\mod\EE$. The precise statement is the following, where $\CC_2(\EE)$ is the category of $\EE$-modules whose projective dimension and grade are equal to $2$.

\begin{theoremi}[\textup{= Theorem \ref{main}}]\label{theoremb}
Let $\EE$ be an idempotent complete additive category. Then there exists a bijection between the following two classes.
\begin{enumerate}
\item Exact structures on $\EE$.
\item Subcategories $\DD$ of $\CC_2(\EE)$ satisfying the following conditions.
\begin{enumerate}
\item $\DD$ is a Serre subcategory of $\mod\EE$.
\item $\Ext^2_\EE(\DD,\EE)$ is a Serre subcategory of $\mod \EE^{\op}$.
\end{enumerate}
\end{enumerate}
\end{theoremi}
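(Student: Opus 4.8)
The plan is to realise both classes as the image and fibres of a single device, the \emph{defect functor} on kernel--cokernel pairs, and then to translate Quillen's axioms into the two Serre conditions, exploiting a self-duality under $\EE\leftrightarrow\EE^{\op}$ so that only half of the work is genuinely new. First I would set up the defect functor. For a kernel--cokernel pair $\delta\colon A\rightarrowtail B\xrightarrow{g}C$ in $\EE$, the Yoneda embedding $\EE\hookrightarrow\Mod\EE$ into the Grothendieck category of all additive functors produces an exact sequence
\[
0\to\EE(-,A)\to\EE(-,B)\to\EE(-,C)\to\operatorname{def}(\delta)\to0 ,
\]
so $\operatorname{def}(\delta)\in\mod\EE$ has projective dimension $\le 2$. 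Applying $\Hom_{\mod\EE}(-,\EE(-,?))$ and using that $g$ is a cokernel of $A\to B$ in $\EE$ gives $\Ext^i_\EE(\operatorname{def}(\delta),\EE)=0$ for $i=0,1$ and, crucially,
\[
\Ext^2_\EE(\operatorname{def}(\delta),\EE)\iso\operatorname{def}_{\EE^{\op}}(\delta^{\op}) ,
\]
the defect of $C\to B\to A$ regarded as a kernel--cokernel pair in $\EE^{\op}$. Hence $\operatorname{def}(\delta)$ is either $0$ --- exactly when $\delta$ splits --- or an object of $\CC_2(\EE)$; conversely, reading a length-$2$ projective resolution of a given $F\in\CC_2(\EE)$ backwards through Yoneda recovers a kernel--cokernel pair $\delta$ with $\operatorname{def}(\delta)\iso F$ (the grade hypothesis is exactly what forces the two recovered maps to be a genuine cokernel and kernel in $\EE$, and idempotent completeness controls the ambiguity in the resolution, so $\delta$ is unique up to adding split pairs). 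Thus $\operatorname{def}$ surjects kernel--cokernel pairs onto $\CC_2(\EE)\cup\{0\}$, and $\Ext^2_\EE(-,\EE)$ transports this whole picture to $\EE^{\op}$, being inverse to $\Ext^2_{\EE^{\op}}(-,\EE^{\op})$ on $\CC_2(\EE)$.

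Next I would define the maps: $\Phi(\xi):=\{\operatorname{def}(\delta)\mid\delta\in\xi\}$ for an exact structure $\xi$, and $\Psi(\DD):=\{\delta\mid\operatorname{def}(\delta)\in\DD\cup\{0\}\}$ for $\DD\subseteq\CC_2(\EE)$. The displayed isomorphism yields $\Ext^2_\EE(\Phi(\xi),\EE)=\Phi(\xi^{\op})$, where $\xi^{\op}$ is the opposite exact structure on $\EE^{\op}$; hence condition (b) for a subcategory $\DD$ of $\CC_2(\EE)$ is literally condition (a) for $\Ext^2_\EE(\DD,\EE)\subseteq\mod\EE^{\op}$, and the statement is self-dual. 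It therefore suffices to control the ``deflation side'' of every axiom and let duality supply the ``inflation side''. To see $\Phi$ is well defined, let $\xi$ be an exact structure; $\Phi(\xi)$ visibly contains $0$, is closed under isomorphisms and finite direct sums, and for the Serre property one uses the dictionary: pushing a conflation out along a morphism replaces its defect by a quotient, pulling it back replaces it by a subobject (because $\EE(-,-)$ sends the pullback square to a pullback square of functors, so the new defect embeds in the old one), and, conversely, every quotient of a defect in $\CC_2(\EE)$, and every subobject of it lying in $\CC_2(\EE)$, is realised this way --- here one uses that $(\EE,\xi)$, being idempotent complete, is weakly idempotent complete, so that ``$vu$ a deflation implies $v$ a deflation'' and its dual are available to promote the recovered maps to honest (de)inflations. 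Closure of $\Phi(\xi)$ under extensions translates into the $3\times3$-lemma for $\xi$. Condition (b) for $\Phi(\xi)$ is then condition (a) for $\Phi(\xi^{\op})$, hence also holds.

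The crux is that $\Psi$ is well defined: for $\DD$ satisfying (a) and (b), $\xi:=\Psi(\DD)$ must verify Quillen's axioms. The ``closure'' axioms (composites of inflations are inflations, composites of deflations are deflations) are encoded, via the dictionary above, by $\DD$ being closed under extensions, modulo the self-duality. The substantive point is the \emph{existence} axioms: given a conflation $A\rightarrowtail B\xrightarrow{g}C$ in $\xi$ and a morphism $h\colon C'\to C$, the candidate pullback is a kernel of $(g,-h)\colon B\oplus C'\to C$, and one must show this kernel exists in $\EE$; one identifies the relevant subfunctor $\EE(-,B)\times_{\EE(-,C)}\EE(-,C')$ of a representable and shows it is again representable, which is where idempotent completeness of $\EE$, the grade hypothesis built into $\CC_2(\EE)$, and the Serre property of $\DD$ (which keeps the new defect --- a subobject of $\operatorname{def}(A\to B\to C)\in\DD$ --- inside $\DD\subseteq\CC_2(\EE)$, hence the defect of a genuine kernel--cokernel pair) together produce the object, and then one checks the resulting square is bicartesian and the pair lies in $\xi$. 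Pushouts of inflations along arbitrary morphisms are handled dually and invoke condition (b); the remaining axioms, including Keller's redundant ``obscure axiom'', reduce to the same dictionary.

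Finally, $\Phi$ and $\Psi$ are mutually inverse. One direction is formal: $\Phi\Psi(\DD)=\DD$ because every object of $\CC_2(\EE)$ is a defect and $0\in\DD$. For $\Psi\Phi(\xi)=\xi$, if $\operatorname{def}(\delta)\iso\operatorname{def}(\delta')$ with $\delta'\in\xi$, then (by the uniqueness of projective resolutions up to trivial summands) $\delta$ and $\delta'$ become isomorphic after adding split kernel--cokernel pairs; since $\delta'$ together with any split pair lies in $\xi$ and $(\EE,\xi)$ is weakly idempotent complete, $\delta\in\xi$. I expect the main obstacle to be precisely the construction in the third paragraph --- exhibiting the pushouts and pullbacks demanded by Quillen's axioms, and verifying \emph{all} of the axioms from exactly the two Serre conditions. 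In particular, the bookkeeping that makes ``a subobject, or a quotient, of a defect is again a defect as soon as it remains in $\CC_2(\EE)$'' work correctly is where both condition (a) \emph{and} condition (b) are genuinely needed, rather than either one alone.
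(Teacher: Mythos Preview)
Your approach is essentially the same as the paper's: it too sets up a preliminary bijection (its Proposition~2.6) between closed-under-homotopy classes of kernel--cokernel pairs and summand-closed subcategories of $\CC_2(\EE)$ via the defect, and then (its Proposition~2.9) shows this restricts to the stated bijection using exactly your dictionary---the $3\times3$-lemma for extension-closure, pullback/pushout for sub/quotient-closure, and Schanuel's lemma together with idempotent completeness to manufacture the objects required by (Ex2).

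One small correction to your sketch of (Ex2): to show the pullback $E$ of $g$ along $h$ \emph{exists}, it is the \emph{quotient} $N=\operatorname{coker}\bigl(\EE(-,B\oplus C')\to\EE(-,C)\bigr)$ of the original defect that you need in $\DD\subseteq\CC_2(\EE)$, so that Schanuel forces the kernel to be representable by some $E$; the \emph{subobject} $L$ of the defect only enters afterwards, to verify that the pulled-back map $E\to C'$ is again a deflation. Your parenthetical has these roles reversed, but both closures are available from the Serre hypothesis, so the argument goes through once the order is corrected.
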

It is surprising to us that such a general description of all exact structures is available. For example, the existence theorem of the largest exact structure due to \cite{rump2} easily follows from Theorem \ref{theoremb} in the case of idempotent complete categories.

We apply Theorem \ref{theoremb} to exact categories of finite type, which is our motivating object. When $\EE$ is a Hom-finite idempotent complete $k$-category of finite type, we show that exact structures on $\EE$ bijectively correspond to sets of simple $\Gamma$-modules satisfying the $2$-regular condition, and sets of dotted arrows of $Q(\Gamma)$, where $\Gamma$ is the Auslander algebra of $\EE$ (Theorem \ref{2regularcorresp} and Corollary \ref{dotted}). We apply this result to obtain Theorem \ref{theorema}, and more generally, an Auslander correspondence for cotilting modules $U$ such that $^\perp U$ is of finite type (Theorem \ref{main1}).
As another application, we obtain the following Auslander correspondence for representation-finite $R$-orders in case $\dim R \geq 2$. This  improves \cite[Theorem 4.2.3]{higher} for the case $n=1$, because our result gives a bijection for representation-finite orders and some assumptions on $\Gamma$ in \cite[Theorem 4.2.3]{higher} are dropped.
\begin{theoremi}[\textup{= Corollary \ref{arorder}}]
Let $R$ be a complete Cohen-Macaulay local ring with $\dim R =d  \geq 2$. Then there exists a bijection between the following.
\begin{enumerate}
\item Morita equivalence classes of $R$-orders $\Lambda$ such that $\CM \Lambda$ is of finite type.
\item Equivalence classes of pairs $(\Gamma, e)$, where $\Gamma$ is a noetherian $R$-algebra and $e$ is an idempotent of $\Gamma$ such that the following conditions are satisfied.
\begin{enumerate}
\item $\gl \Gamma = d$.
\item $\Gamma e$ is maximal Cohen-Macaulay as an $R$-module.
\item $\un{\Gamma}:= \Gamma / \Gamma e \Gamma$ is of finite length over $R$.
\item $\un{\Gamma}/ (\rad \un{\Gamma})$ is a direct sum of simple $\Gamma$-modules satisfying the $2$-regular condition.
\end{enumerate}
\end{enumerate}
\end{theoremi}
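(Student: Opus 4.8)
\emph{Strategy.} This is an application of the paper's general results --- Theorems \ref{theoremb} and \ref{2regularcorresp}, their $R$-linear analogues from the appendix, and the cotilting version Theorem \ref{main1}. Let $R$ be a complete Cohen--Macaulay local ring of dimension $d\ge 2$ with canonical module $\omega_R$, let $\Lambda$ be an $R$-order, and set $\omega_\Lambda:=\Hom_R(\Lambda,\omega_R)$. Since $\Lambda$ is maximal Cohen--Macaulay over $R$ one has $\RHom_R(\Lambda,\omega_R)=\omega_\Lambda$, so the adjunction $\RHom_\Lambda(X,\omega_\Lambda)\iso\RHom_R(X,\omega_R)$ gives $\Ext^i_\Lambda(X,\omega_\Lambda)\iso\Ext^i_R(X,\omega_R)$ for every $i$ and every $X\in\mod\Lambda$. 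As $\Ext^{>0}_R(X,\omega_R)=0$ exactly when $X$ is maximal Cohen--Macaulay, this identifies $\CM\Lambda$ with $^{\perp}\omega_\Lambda$, with $\id_\Lambda\omega_\Lambda=d$; thus $\CM\Lambda$ is an idempotent complete exact $R$-category with enough projectives $\add\Lambda$ and enough injectives $\add\omega_\Lambda$, and it is of finite type precisely when $^{\perp}\omega_\Lambda$ is. This places us in the scope of Theorem \ref{main1} with $n=d$, and more basically of the $R$-linear forms of Theorems \ref{theoremb} and \ref{2regularcorresp}.

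\emph{Passing from $\Lambda$ to $(\Gamma,e)$.} Assume $\CM\Lambda$ is of finite type, let $M$ be a basic additive generator of $\CM\Lambda$ with $\Lambda$ as a summand, let $\Gamma:=\End_\Lambda(M)$ be the Auslander algebra --- a module-finite $R$-algebra, so $\proj\Gamma\equi\add M$ and $\mod\Gamma$ is $\Hom$-finite over $R$ --- and let $e$ be the idempotent of $\Gamma$ cutting out the summand $\Lambda$. Then $e\Gamma e\iso\Lambda$ and $\un\Gamma:=\Gamma/\Gamma e\Gamma$ is the stable Auslander algebra of $\un{\CM}\Lambda=\CM\Lambda/[\add\Lambda]$. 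Condition (a), $\gl\Gamma=d$, is the statement of Theorem \ref{main1} in case $n=d\ge 2$ --- a global-dimension computation going back to Iyama for $d$-dimensional orders. Condition (b): under $\proj\Gamma\equi\add M$ the projective $\Gamma e$ has underlying $R$-module $\iso M$, which is maximal Cohen--Macaulay; conversely, if $\Gamma e$ is maximal Cohen--Macaulay then so is its $R$-module direct summand $e\Gamma e$ in the Peirce decomposition $\Gamma e=e\Gamma e\oplus(1-e)\Gamma e$, so $\Lambda=e\Gamma e$ is an $R$-order. Condition (c): a Cohen--Macaulay-finite order is an isolated singularity, so stable homomorphisms between maximal Cohen--Macaulay $\Lambda$-modules have finite $R$-length and $\un\Gamma$ is of finite length; conversely this finiteness is exactly what endows $\un{\CM}\Lambda$, hence $\CM\Lambda$, with a full theory of Auslander--Reiten conflations. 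Finally, by the $R$-linear Auslander--Reiten theory of the appendix, $\CM\Lambda$ has an AR conflation ending at every non-projective indecomposable, so under Theorem \ref{2regularcorresp} the (unique) exact structure on $\CM\Lambda\equi\proj\Gamma$ corresponds to the set of simple $\Gamma$-modules satisfying the $2$-regular condition of Definition \ref{2regdef}; that set is exactly the set of simple $\un\Gamma$-modules, and since $\un\Gamma/\rad\un\Gamma$ is semisimple, (d) follows.

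\emph{The converse, the main obstacle, and \cite{higher}.} Conversely, given $(\Gamma,e)$ with (a)--(d), Theorem \ref{2regularcorresp} applied to the $2$-regular simples singled out by (d) equips $\EE:=\proj\Gamma$ with a unique exact structure whose AR conflations are indexed by the simple $\un\Gamma$-modules; put $\Lambda:=e\Gamma e$, which is an $R$-order by (b). One then checks that the functor $e(-)\colon\proj\Gamma\to\mod\Lambda$ associated with $e$ restricts to an exact equivalence $\EE\xrightarrow{\ \sim\ }\CM\Lambda$: condition (a) bounds the $\add(\Gamma e)$-resolution length of objects of $\EE$ by $d$, so their images have projective dimension $\le d$ over $\Lambda$ and are maximal Cohen--Macaulay over $R$ by (b), while (c) forces the functor to be dense. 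Hence $\CM\Lambda$ is of finite type with Auslander algebra $\Gamma$, and the two assignments are mutually inverse and compatible with the indicated equivalence relations, as in Theorem \ref{main1} transported along this dictionary. The real work is this last equivalence $\EE\simeq\CM\Lambda$: one must verify, over the noetherian complete local ring $R$ rather than a field, that the functor attached to $e$ lands in and exhausts $\CM\Lambda$ and not some strictly smaller or larger resolving subcategory of $\mod\Lambda$ --- this is where (b) (images are maximal Cohen--Macaulay) and (c) (no extra indecomposables) are used essentially, and it rests on the $R$-linear Auslander--Reiten theory and the $R$-linear form of Theorem \ref{theoremb} developed in the appendix. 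Relative to \cite[Theorem 4.2.3]{higher}, the improvement is that a maximal Cohen--Macaulay hypothesis is imposed only on $\Gamma e$ --- $\Gamma$ itself need not be an $R$-order once $d\ge 3$ --- with (d) pinning down exactly which idempotents $e$ are admissible.
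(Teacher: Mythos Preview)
Your forward direction is essentially the paper's route through Theorem~\ref{main1} and Corollary~\ref{ordercor}, and the verification of conditions (b)--(d) is fine. Two points need attention, though.

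\medskip
\textbf{The equality $\gl\Gamma = d$.} Theorem~\ref{main1} with $n=d$ only gives $\gl\Gamma\le d$; you wave at ``a global-dimension computation going back to Iyama'' for the lower bound, but the paper supplies a short explicit argument which you should reproduce: since $\Gamma e$ is a direct summand of $\Gamma$ and is maximal Cohen--Macaulay over $R$ by (b), one has $d=\dim_R\Gamma e\le\dim_R\Gamma\le\gl\Gamma$, the last inequality being a general fact for noetherian $R$-algebras (see \cite[Corollary~3.5(4)]{towards}).

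\medskip
\textbf{The density gap in the converse.} Your assertion that ``(c) forces the functor to be dense'' is not justified and is the real hole. Condition (c) --- that $\Gamma/\Gamma e\Gamma$ has finite length --- tells you, via Proposition~\ref{enough}, that the exact category $\EE$ has enough projectives $\add(e\Gamma)$; it does \emph{not} tell you that every object of $\CM\Lambda$ lies in the essential image $\add(\Gamma e)\subset\mod\Lambda$. Likewise, your ``$\add(\Gamma e)$-resolution length $\le d$'' argument is beside the point: the images are summands of $\Gamma e$, hence MCM over $R$ directly by (b), so you get the inclusion $\add(\Gamma e)\subset\CM\Lambda$ for free, but not the reverse inclusion.

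The paper closes this gap by going through Corollary~\ref{ordercor} and then invoking Proposition~\ref{ordercase}(1): from Proposition~\ref{eno} one knows $\EE\simeq{}^{\perp}U$ for a cotilting $\Lambda$-module $U$ with $\id U_\Lambda\le d$, and from (b) together with Proposition~\ref{ordercase}(2) one has $U\in\CM\Lambda$ and $\Lambda$ an $R$-order. The key input is then the last sentence of Proposition~\ref{ordercase}(1): for an $R$-order $\Lambda$ and $U\in\CM\Lambda$ cotilting with $\id U_\Lambda\le d$, one necessarily has $\add U=\add D_d\Lambda$, whence ${}^{\perp}U=\CM\Lambda$. This uses Lemma~\ref{wellknown}(2) (an MCM module has $\id\ge d$, with equality forcing it to be injective in $\CM\Lambda$) and is the step your sketch omits. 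Without it, nothing prevents ${}^{\perp}U$ from being a proper resolving subcategory of $\CM\Lambda$.
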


Also, we investigate the Grothendieck group $\KKK_0(\EE)$ of an exact category $\EE$ of finite type, and show that the relation of $\KKK_0(\EE)$ is generated by AR conflations under some mild conditions (Corollary \ref{grocor}). This unifies several known results by \cite{ar1,but,yo}.

The appendix contains a brief discussion of the Auslander-Reiten theory for exact categories which are Hom-noetherian over a noetherian complete local ring $R$. We show that the existence of AR conflations is closely related to the AR duality and dualizing $R$-varieties (Theorem \ref{confldual}). These observations shed new light on the result on isolated singularities by Auslander \cite{isolated}, see Remark \ref{isolatedrmk}.

This paper is organized as follows. In Section 2, we state and prove our main classification. In Section 3, we investigate exact categories of finite type. In Section 4, we  apply our previous results to the study of the representation theory of algebras.

\subsection{Conventions}
Throughout this paper, \emph{we assume that all categories are skeletally small}, that is, the isomorphism classes of objects form a set. In addition, \emph{all subcategories are assumed to be full and closed under isomorphisms}. By a \emph{module} we always mean right modules unless otherwise stated.
As for exact categories, we use the terminologies \emph{inflations}, \emph{deflations} and \emph{conflations}. We refer the reader to \cite{buhler} for the basics of exact categories.

\section{Classifying exact structures via Serre subcategories}
In this section, we give a bijection between exact structures on an idempotent complete additive category $\EE$ and subcategories of $\mod\EE$ satisfying certain ``2-regular condition.''
\emph{Throughout this section, we always assume that $\EE$ is an additive category}.

Recall that an additive category $\EE$ is \emph{idempotent complete} if every morphism $e:X \to X$ in $\EE$ satisfying $e^2 = e$ has a kernel, or equivalently, a cokernel. For example, every subcategory of an abelian category which is closed under direct sums and direct summands are idempotent complete.

\subsection{Preliminaries on functor categories}
Our strategy to investigate exact structures on $\EE$ is to study the \emph{module category} $\Mod\EE$ over $\EE$. Let us recall related definitions.

For an additive category $\EE$, a \emph{right $\EE$-module} $M$ is a contravariant additive functor $M: \EE^{\op} \to \Ab$ from $\EE$ to the category of abelian groups $\Ab$. We denote by $\Mod \EE$ the category of right $\EE$-modules, and morphisms in $\Mod \EE$ are natural transformations between them. This category $\Mod\EE$ is an abelian category with enough projectives, and projective objects are precisely direct summands of (possibly infinite) direct sums of representable functors.
For simplicity, we put $P_X := \EE(-,X) \in \Mod\EE$ and $P^X := \EE(X,-) \in \Mod\EE^{\op}$ for any $X$ in $\EE$. Note that $P_{(-)}:\EE \to \Mod\EE$ gives the Yoneda embedding.

We say that a $\EE$-module $M$ is \emph{finitely generated} if there exists an epimorphism $P_X \defl M$ for some $X$ in $\EE$. Throughout this paper, we often use the fact that \emph{$\EE$ is idempotent complete if and only if the essential image of the Yoneda embedding $P_{(-)}: \EE \to \Mod\EE$ consists of all finitely generated projective $\EE$-modules}. We denote by $\mod \EE$ the category of finitely generated $\EE$-modules. We also denote by $\mod_1 \EE$ the category of finitely presented $\EE$-modules, that is, the modules $M$ such that there exists an exact sequence $P_X \to P_Y \to M \to 0$ for some $X,Y$ in $\EE$.

We define a contravariant functor $\Hom_\EE(-,\EE): \Mod\EE \to \Mod \EE^{\op}$ by the following way: For $M$ in $\Mod\EE$, the left $\EE$-module $\Hom_\EE(M,\EE): \EE \to \Ab$ is the composition of the Yoneda embedding $\EE \to \Mod\EE$ and $(\Mod\EE)(M,-):\Mod\EE \to \Ab$. Note that $\Hom_\EE(-,\EE)$ is a left exact functor satisfying $\Hom_\EE(P_{(-)},\EE) \equi P^{(-)}$. We denote by $\Ext_\EE^i(-,\EE) :\Mod\EE \to \Mod\EE^{\op}$ the $i$-th right derived functor of $\Hom_\EE(-,\EE)$.

Using these concepts, we can interpret kernel-cokernel pairs in terms of modules over $\EE$.
Here we say that a complex
$
X \xrightarrow{f} Y \xrightarrow{g} Z
$
in $\EE$ is a \emph{kernel-cokernel pair} if $f$ is a kernel of $g$ and $g$ is a cokernel of $f$.
\begin{proposition}\label{aiu}
Let $X \xrightarrow{f} Y \xrightarrow{g} Z$ be a complex in $\EE$. Put $M:= \coker(P_g) \in \Mod\EE$. Then the following hold.
\begin{enumerate}
\item $g$ is an epimorphism in $\EE$ if and only if $\Hom_\EE(M,\EE)=0$.
\item $X \xrightarrow{f} Y \xrightarrow{g} Z$ is a kernel-cokernel pair if and only if the following conditions are satisfied.
\begin{enumerate}
\item $0 \to P_X \xrightarrow{P_f} P_Y \xrightarrow{P_g} P_Z \to M \to 0$ is exact.
\item $\Ext_\EE^i (M,\EE)=0$ for $i = 0,1$.
\end{enumerate}
\end{enumerate}
\end{proposition}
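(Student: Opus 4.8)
The plan is to unwind the Yoneda embedding, which is fully faithful and exact on the additive category $\EE$, and translate the order-theoretic universal properties (kernel, cokernel) defining a kernel-cokernel pair into exactness statements about the sequence of representable functors $P_X \to P_Y \to P_Z$ in the abelian category $\Mod\EE$. The key elementary fact I would use repeatedly is that $P_{(-)}\colon \EE \to \Mod\EE$ reflects and preserves the existence of kernels and cokernels for the morphisms that do exist in $\EE$: a morphism $h$ in $\EE$ is a monomorphism (resp. epimorphism) with respect to an object $W$ exactly when $\EE(W,-)$ (resp. $\EE(-,W)$) applied to $h$ is injective (resp. the relevant Hom-sequence is exact). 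Thus ``$f$ is a kernel of $g$'' in $\EE$ is equivalent to saying that for every $W$ the sequence $0 \to \EE(W,X) \to \EE(W,Y) \to \EE(W,Z)$ is exact, i.e. $0 \to P_X \xrightarrow{P_f} P_Y \xrightarrow{P_g} P_Z$ is exact in $\Mod\EE$; and ``$g$ is a cokernel of $f$'' in $\EE$ is equivalent to the statement that the functor $\Hom_\EE(M,\EE)$ vanishes in the appropriate graded sense, which is where part~(1) enters.

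For part~(1): set $M := \coker(P_g)$, so by definition there is an exact sequence $P_Y \xrightarrow{P_g} P_Z \to M \to 0$ in $\Mod\EE$. Applying the left exact functor $\Hom_\EE(-,\EE)$ and using $\Hom_\EE(P_W,\EE) \equi P^W = \EE(W,-)$, one gets an exact sequence $0 \to \Hom_\EE(M,\EE) \to \EE(Z,-) \xrightarrow{(-)\circ g} \EE(Y,-)$ of left $\EE$-modules. Hence $\Hom_\EE(M,\EE) = 0$ if and only if $(-)\circ g\colon \EE(Z,W)\to \EE(Y,W)$ is injective for every $W$, which is by definition the statement that $g$ is an epimorphism in $\EE$. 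This also handles, essentially for free, one direction needed in part~(2): $g$ epi gives the exactness of $P_Y \to P_Z \to M \to 0$ with $\Hom_\EE(M,\EE)=0$.

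For part~(2): if $X \xrightarrow{f} Y \xrightarrow{g} Z$ is a kernel-cokernel pair, then $f = \ker g$ gives exactness of $0 \to P_X \xrightarrow{P_f} P_Y \xrightarrow{P_g} P_Z$, and $g = \coker f$ gives in particular that $g$ is epi, so by part~(1) $\Hom_\EE(M,\EE) = 0$; this establishes condition~(a) and the $i=0$ case of condition~(b). For $\Ext^1_\EE(M,\EE) = 0$, I would use the projective resolution $0 \to P_X \to P_Y \to P_Z \to M \to 0$ (condition~(a)) to compute $\Ext^1_\EE(M,\EE)$ as the first cohomology of $0 \to \EE(Z,-) \to \EE(Y,-) \to \EE(X,-)$, i.e. as the quotient $\ker(\EE(Y,-)\to\EE(X,-)) / \im(\EE(Z,-)\to\EE(Y,-))$; the vanishing of this is exactly the universal property that makes $g = \coker f$ (every map out of $Y$ killing $f$ factors through $g$), combined with $f$ being a weak kernel enough to identify the kernel term. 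Conversely, assuming (a) and (b): exactness of $0 \to P_X \to P_Y \to P_Z$ together with the Yoneda lemma gives that $f$ is a kernel of $g$ in $\EE$; the vanishing $\Hom_\EE(M,\EE)=0$ gives via part~(1) that $g$ is epi, and the vanishing $\Ext^1_\EE(M,\EE)=0$, read off the resolution $0\to P_X\to P_Y\to P_Z\to M\to 0$, supplies the factorization property making $g$ a cokernel of $f$.

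The main obstacle, and the only step requiring care rather than routine unwinding, is the $\Ext^1$ translation: one must check that the first cohomology of the complex $\Hom_\EE(P_X \to P_Y \to P_Z, \EE) = (\EE(Z,-) \to \EE(Y,-) \to \EE(X,-))$ computes $\Ext^1_\EE(M,\EE)$ — this uses that $P_X, P_Y, P_Z$ are projective in $\Mod\EE$ so that $0 \to P_X \to P_Y \to P_Z \to M \to 0$ is a genuine projective resolution — and then that the vanishing of this cohomology is literally the assertion ``every morphism $t\colon Y \to W$ with $tf = 0$ factors (uniquely, using that $g$ is already epi) through $g$,'' which is precisely the cokernel property of $g$ beyond its being an epimorphism. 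I would state this factorization translation explicitly, since it is the conceptual heart of the equivalence.
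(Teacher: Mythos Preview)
Your proof is correct and follows the same approach as the paper: both arguments apply $\Hom_\EE(-,\EE)$ to the projective resolution $0 \to P_X \to P_Y \to P_Z \to M \to 0$ and identify the vanishing of $\Ext^i_\EE(M,\EE)$ for $i=0,1$ with exactness of $0 \to P^Z \xrightarrow{P^g} P^Y \xrightarrow{P^f} P^X$, which is precisely the statement that $g$ is a cokernel of $f$. The only difference is presentational: the paper bundles condition~(b) as exactness of the dual sequence in one stroke, whereas you separate the epi part ($\Hom=0$) from the factorization part ($\Ext^1=0$); the aside about ``$f$ being a weak kernel enough to identify the kernel term'' is unnecessary, since $\ker(P^f)$ is by definition the set of maps $t$ with $tf=0$ regardless of any property of $f$.
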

\begin{proof}
(1)
We have an exact sequence $0 \to \Hom_\EE(M,\EE) \to P^Z \xrightarrow{P^g} P^Y$. Thus $g$ is an epimorphism if and only if $P^g $ is a monomorphism if and only if $\Hom_\EE(M,\EE)=0$.

(2)
The condition (a) is equivalent to that $f$ is a kernel of $g$. Under (a), the condition (b) is equivalent to that $0 \to P^Z \xrightarrow{P^g} P^Y \xrightarrow{P^f} P^X$ is exact, which holds precisely when $g$ is a cokernel of $f$.
\end{proof}

We need the following technical lemma later.
\begin{lemma}\label{Schanuel}
Suppose that $\EE$ is idempotent complete and there exists an exact sequence
\[
0 \to P_X \to P_Y \to P_Z \to M \to 0
\]
in $\Mod\EE$. For any morphism $h:B \to C$ with $\coker (P_h) \iso M$, there exists an object $A$ in $\EE$ such that $\ker P_h \iso P_A$.
\end{lemma}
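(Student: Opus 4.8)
The plan is to use a generalized Schanuel-type argument on the two projective resolutions of $M$. We are given an exact sequence $0 \to P_X \to P_Y \to P_Z \to M \to 0$, and a morphism $h : B \to C$ with $\coker(P_h) \iso M$. The cokernel of $P_h$ being $M$ gives an exact sequence $P_B \xrightarrow{P_h} P_C \to M \to 0$; let $K := \ker(P_h)$, so that $0 \to K \to P_B \xrightarrow{P_h} P_C \to M \to 0$ is exact. The goal is to show $K \iso P_A$ for some $A \in \EE$; since $\EE$ is idempotent complete, it suffices to show that $K$ is a finitely generated projective $\EE$-module.

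First I would split the four-term exact sequence $0 \to P_X \to P_Y \to P_Z \to M \to 0$ at the image $N := \im(P_Y \to P_Z) = \ker(P_Z \to M)$, giving a short exact sequence $0 \to N \to P_Z \to M \to 0$ together with $0 \to P_X \to P_Y \to N \to 0$; the latter exhibits $N$ as a module of projective dimension at most one with a length-two projective resolution by finitely generated projectives. Similarly, setting $L := \im(P_B \to P_C) = \ker(P_C \to M)$, we get $0 \to L \to P_C \to M \to 0$ and $0 \to K \to P_B \to L \to 0$. Now Schanuel's lemma applied to the two short exact sequences $0 \to N \to P_Z \to M \to 0$ and $0 \to L \to P_C \to M \to 0$ yields an isomorphism $N \oplus P_C \iso L \oplus P_Z$ in $\Mod\EE$. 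Feeding this into the resolution $0 \to P_X \to P_Y \to N \to 0$, we obtain $0 \to P_X \to P_Y \oplus P_C \to L \oplus P_C \to 0$, and comparing with $0 \to K \to P_B \to L \to 0$ (after adding $P_C$: $0 \to K \to P_B \oplus P_C \to L \oplus P_C \to 0$) via Schanuel's lemma again gives $K \oplus P_B \oplus P_C \iso P_X \oplus P_Y \oplus P_C$, i.e. $K \oplus P_B \oplus P_C \iso P_X \oplus P_Y \oplus P_C$.

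From the isomorphism $K \oplus (P_B \oplus P_C) \iso P_X \oplus P_Y \oplus P_C$ it follows that $K$ is a direct summand of a finitely generated projective $\EE$-module, hence $K$ is itself finitely generated projective. Since $\EE$ is idempotent complete, the essential image of the Yoneda embedding consists of exactly the finitely generated projective $\EE$-modules, so $K \iso P_A$ for some $A$ in $\EE$, as required.

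The routine parts here are the repeated applications of Schanuel's lemma and the bookkeeping of direct summands. The one point that needs a little care — and the main obstacle — is making sure the length-two resolution $0 \to P_X \to P_Y \to N \to 0$ interacts correctly with the Schanuel isomorphism $N \oplus P_C \iso L \oplus P_Z$: one must verify that pushing out (or rather, combining) these identifications genuinely produces a short exact sequence with middle term a finitely generated projective and right term $L \oplus (\text{f.g. projective})$, so that the final Schanuel step applies with the correct finitely generated projective modules on both sides. Once the identifications are tracked consistently, idempotent completeness does the rest.
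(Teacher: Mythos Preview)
Your approach is exactly the paper's: apply Schanuel's lemma to the two length-two projective resolutions of $M$ and conclude by idempotent completeness. The paper states the outcome in one line as $\ker P_h \oplus P_Y \oplus P_C \iso P_X \oplus P_B \oplus P_Z$.

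Your bookkeeping has slipped, though. After the first Schanuel step you have $N \oplus P_C \iso L \oplus P_Z$, so augmenting $0 \to P_X \to P_Y \to N \to 0$ by $P_C$ gives $0 \to P_X \to P_Y \oplus P_C \to L \oplus P_Z \to 0$ (not $L \oplus P_C$); correspondingly you must add $P_Z$, not $P_C$, to $0 \to K \to P_B \to L \to 0$ before the second Schanuel step, yielding $K \oplus P_Y \oplus P_C \iso P_X \oplus P_B \oplus P_Z$ rather than the isomorphism you wrote. With these subscript fixes the argument is correct.
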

\begin{proof}
Schanuel's lemma shows that $\ker P_h \oplus P_Y \oplus P_C \iso P_X \oplus P_B \oplus P_Z$, which clearly implies that $\ker P_h$ is finitely generated projective. Since $\EE$ is idempotent complete, the assertion holds.
\end{proof}

\subsection{Construction of maps}
First we fix some notations which we need to describe our main theorem. The following observation follows immediately from Proposition \ref{aiu}.
\begin{lemma}\label{kctokuchou}
For an object $M$ in $\Mod\EE$, the following are equivalent.
\begin{enumerate}
\item There exists a kernel-cokernel pair $X \to Y \xrightarrow{g} Z$ in $\EE$ such that $M$ is isomorphic to $\coker (P_g)$.
\item There exists an exact sequence $0 \to P_X \to P_Y \to P_Z \to M \to 0$ in $\Mod\EE$ and $\Ext_\EE^i(M,\EE)=0$ for $i=0,1$.
\end{enumerate}
\end{lemma}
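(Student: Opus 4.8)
The plan is to derive both implications directly from Proposition \ref{aiu}(2), the only additional ingredient being the fullness and faithfulness of the Yoneda embedding $P_{(-)}\colon \EE \to \Mod\EE$. So first I would set up the dictionary: for objects $W, W'$ of $\EE$ the Yoneda lemma gives a natural isomorphism $\Hom_{\Mod\EE}(P_W, P_{W'}) \iso \EE(W,W')$, so every morphism between representable $\EE$-modules is of the form $P_h$ for a unique $h$ in $\EE$, and $P_h = 0$ forces $h = 0$. Nothing deeper than this is needed.

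For the implication $(1)\Rightarrow(2)$, I would simply take the given kernel-cokernel pair $X \xrightarrow{f} Y \xrightarrow{g} Z$ with $M \iso \coker(P_g)$ and apply Proposition \ref{aiu}(2) to the complex $X \xrightarrow{f} Y \xrightarrow{g} Z$: its two conclusions are exactly the exact sequence $0 \to P_X \to P_Y \to P_Z \to M \to 0$ and the vanishing $\Ext^i_\EE(M,\EE) = 0$ for $i = 0,1$ demanded in $(2)$.

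For $(2)\Rightarrow(1)$, I would start from an exact sequence $0 \to P_X \xrightarrow{a} P_Y \xrightarrow{b} P_Z \to M \to 0$ with $\Ext^i_\EE(M,\EE) = 0$ for $i = 0,1$, and use Yoneda to write $a = P_f$ and $b = P_g$ for morphisms $f\colon X \to Y$ and $g\colon Y \to Z$ in $\EE$. Since $ba = 0$ and Yoneda is faithful, $gf = 0$, so $X \xrightarrow{f} Y \xrightarrow{g} Z$ is a genuine complex in $\EE$ with $\coker(P_g) \iso M$. The two hypotheses of $(2)$ are then literally conditions (a) and (b) of Proposition \ref{aiu}(2), so that proposition tells us $X \xrightarrow{f} Y \xrightarrow{g} Z$ is a kernel-cokernel pair, which is $(1)$.

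The only step that requires any attention — and it is a mild one — is the reduction of a morphism of representable functors to a morphism of $\EE$ via the Yoneda lemma; once the maps $a, b$ are recognized as $P_f, P_g$, the statement is a transcription of Proposition \ref{aiu}(2), so I do not expect a real obstacle here.
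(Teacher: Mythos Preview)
Your proposal is correct and matches the paper's own approach: the paper simply states that the lemma ``follows immediately from Proposition \ref{aiu},'' and your argument spells out exactly that reduction, with the Yoneda lemma supplying the translation between morphisms of representables and morphisms in $\EE$.
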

We denote by $\CC_2(\EE)$ the subcategory of $\Mod\EE$ consisting of $\EE$-modules satisfying the above equivalent conditions. This class of modules play an indispensable role throughout this paper.

\begin{lemma}\label{summandclosed}
The category $\CC_2(\EE)$ is closed under direct summands in $\Mod\EE$.
\end{lemma}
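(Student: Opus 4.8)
The plan is to show that $\CC_2(\EE)$ is closed under direct summands by using the characterization of its objects given in Lemma \ref{kctokuchou}(2): an $\EE$-module $M$ lies in $\CC_2(\EE)$ if and only if it admits a projective resolution of length $2$ by finitely generated projectives, $0 \to P_X \to P_Y \to P_Z \to M \to 0$, and $\Ext_\EE^i(M,\EE)=0$ for $i=0,1$. Suppose $M \iso M_1 \oplus M_2$ in $\Mod\EE$ with $M \in \CC_2(\EE)$; I want to show $M_1 \in \CC_2(\EE)$ (and then $M_2 \in \CC_2(\EE)$ by symmetry).

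The vanishing condition passes to summands for free: since $\Ext_\EE^i(-,\EE)$ is additive, $\Ext_\EE^i(M_1,\EE)$ is a direct summand of $\Ext_\EE^i(M,\EE)=0$, hence vanishes for $i=0,1$. So the real content is the existence of a length-$2$ resolution of $M_1$ by \emph{finitely generated} projective $\EE$-modules. First I would observe that $M_1$ is finitely presented: from $P_Z \defl M \defl M_1$ we see $M_1$ is finitely generated, and then, taking a finite presentation $P_{Y'} \to P_Z \to M_1 \to 0$ (the map $P_Z \to M_1$ being the composite $P_Z \defl M \defl M_1$, whose kernel is the image of $P_Y$ together with the submodule $M_2 \subseteq M$ — actually more carefully, since $M$ is finitely presented and $\mod_1\EE$ is closed under direct summands and kernels of epis between finitely generated projectives, $M_1 \in \mod_1\EE$). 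The key step is then to control the second syzygy. Choose any epimorphism $h : P_B \defl M_1$ with $B \in \EE$ and set $\Omega M_1 := \ker h$. Applying Lemma \ref{Schanuel}: we have the length-$2$ resolution $0 \to P_X \to P_Y \to P_Z \to M \to 0$, and I would combine it with a length-$2$ resolution of $M_2$ (which needs to be produced symmetrically — this is the circular point to be careful about) to get a resolution of $M = M_1 \oplus M_2$ that is built from a chosen presentation involving $M_1$; but the cleaner route is: the direct sum of a projective presentation $P_{B'} \xrightarrow{h'} P_B \to M_1 \to 0$ with one for $M_2$ gives a projective presentation of $M$, whose first syzygy $\coker$-style invariant matches $M$, so by Lemma \ref{Schanuel} applied to $M \in \CC_2(\EE)$, the kernel of $P_{B'} \oplus P_{B_2'} \to P_B \oplus P_{B_2}$ is $P_A$ for some $A \in \EE$.

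The subtle point, which I expect to be the main obstacle, is the apparent circularity: Lemma \ref{Schanuel} as stated needs a length-$2$ resolution of the module in question, and I only have one for $M$, not for $M_1$ or $M_2$ individually. The way to break this is to avoid resolving the summands separately at the start: take a single finitely generated projective presentation $P_1 \to P_0 \to M_1 \to 0$ and another $Q_1 \to Q_0 \to M_2 \to 0$, form their direct sum $P_1\oplus Q_1 \to P_0 \oplus Q_0 \to M \to 0$, and apply Lemma \ref{Schanuel} to $M$ (using its known resolution from $\CC_2(\EE)$) to conclude that $\ker(P_1\oplus Q_1 \to P_0\oplus Q_0) \iso P_A$ for some $A$. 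Since this kernel is $\Omega M_1 \oplus \Omega M_2$ (as the presentations are direct sums), $\Omega M_1$ is a direct summand of the finitely generated projective $P_A$; by idempotent completeness of $\EE$, $\Omega M_1 \iso P_{A_1}$ for some $A_1 \in \EE$. This yields the required exact sequence $0 \to P_{A_1} \to P_1 \to P_0 \to M_1 \to 0$ with all terms finitely generated projective, so $\pd_{\mod\EE} M_1 \le 2$; combined with the $\Ext$-vanishing (and noting $M_1 \ne 0$ forces $\pd = 2$, else $\Ext^{\le 1}$ vanishing would fail for a nonzero module of $\pd \le 1$ — or one simply checks it directly from Lemma \ref{kctokuchou}), Lemma \ref{kctokuchou} gives $M_1 \in \CC_2(\EE)$. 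By symmetry $M_2 \in \CC_2(\EE)$, completing the proof.
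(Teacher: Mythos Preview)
Your argument is essentially correct and gives a genuinely different, more self-contained proof than the paper's. The paper does not argue directly: it introduces the auxiliary category $\mod_2\EE$ (modules admitting a two-step finite presentation), cites \cite[Lemma 2.5(a)]{kimura} for the fact that $\mod_2\EE$ is closed under summands, and then observes that $\CC_2(\EE)$ is the intersection of $\mod_2\EE$ with two subcategories (projective dimension $\leq 2$; vanishing of $\Ext^{0,1}_\EE(-,\EE)$) that are trivially summand-closed. Your route avoids the external citation entirely by exploiting Lemma~\ref{Schanuel}: presenting $M_1$ and $M_2$ separately, summing, and applying Schanuel to the known resolution of $M$ forces the summed second syzygy to be representable, and idempotent completeness then splits it. This is more elementary and keeps the proof internal to the paper; the paper's version is shorter on the page but outsources the real work.

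Two small points to tighten. First, your justification that $M_1 \in \mod_1\EE$ is a bit garbled: $M_2$ is not a submodule of $P_Z$, but your instinct is right --- the kernel of $P_Z \defl M_1$ sits in a short exact sequence between $\ker(P_Z \to M)$ (finitely generated, being the image of $P_Y$) and $M_2$ (finitely generated as a summand of $M$), so it is finitely generated, and hence $M_1$ is finitely presented. Second, when you write ``this kernel is $\Omega M_1 \oplus \Omega M_2$'', you mean $\ker(P_1 \to P_0) \oplus \ker(Q_1 \to Q_0)$, i.e.\ the \emph{second} syzygies relative to your chosen presentations, not the first syzygies $\ker(P_0 \to M_1)$ you defined earlier as $\Omega M_1$. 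With that notation fixed, the conclusion $0 \to P_{A_1} \to P_1 \to P_0 \to M_1 \to 0$ exact is exactly what Lemma~\ref{kctokuchou} requires, and your final parenthetical about $\pd = 2$ is unnecessary (the lemma only asks for $\pd \leq 2$).
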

\begin{proof}
We denote by $\mod_2 \EE$ the subcategory of $\Mod\EE$ consisting of all objects $M$ such that there exists an exact sequence
\[
P_2 \to P_1 \to P_0 \to M \to 0
\]
in $\Mod\EE$, where each $P_i$ is finitely generated projective.

In \cite[Lemma 2.5(a)]{kimura}, it was shown that $\mod_2\EE$ is closed under summands in $\Mod\EE$.
Note that $\CC_2(\EE)$ is equal to the intersection of two subcategories of $\mod_2\EE$:
\begin{enumerate}
\item the subcategory consisting of all objects whose projective dimensions are at most $2$ and
\item the subcategory consisting of all objects $M$ such that $\Ext^i_\EE(M,\EE)=0$ for $i=0,1$.
\end{enumerate}
 Since these subcategories are closed under direct summands in $\Mod\EE$, so is $\CC_2(\EE)$.
\end{proof}

\begin{lemma}\label{e2}
The functor $\Ext^2_\EE(-,\EE)$ induces a duality of exact categories $\CC_2(\EE) \to \CC_2(\EE^{\op})$.
\end{lemma}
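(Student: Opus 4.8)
The plan is to show that $\Ext^2_\EE(-,\EE)$ restricts to a well-defined functor $\CC_2(\EE)\to\CC_2(\EE^{\op})$ and that the analogous functor on $\EE^{\op}$ provides a quasi-inverse, so that we get a duality. The starting point is Lemma \ref{kctokuchou}: a module $M\in\mod\EE$ lies in $\CC_2(\EE)$ precisely when there is an exact sequence
\[
0\to P_X\to P_Y\to P_Z\to M\to 0
\]
in $\Mod\EE$ with $\Ext^i_\EE(M,\EE)=0$ for $i=0,1$. So first I would fix such a presentation of $M$, split it into two short exact sequences, and apply $\Hom_\EE(-,\EE)$. Using that $\Hom_\EE(P_{(-)},\EE)\equi P^{(-)}$ and the vanishing $\Ext^0_\EE(M,\EE)=\Ext^1_\EE(M,\EE)=0$, a diagram chase (or a short spectral-sequence/long-exact-sequence argument) yields an exact sequence
\[
0\to P^Z\to P^Y\to P^X\to \Ext^2_\EE(M,\EE)\to 0
\]
in $\Mod\EE^{\op}$, and shows in addition that $\Ext^i_\EE(M,\EE)=0$ also for the only remaining degrees of interest, i.e. $\Ext^0_{\EE^{\op}}(\Ext^2_\EE(M,\EE),\EE^{\op})=\Ext^1_{\EE^{\op}}(\Ext^2_\EE(M,\EE),\EE^{\op})=0$. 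By Lemma \ref{kctokuchou} applied to $\EE^{\op}$, this says exactly $\Ext^2_\EE(M,\EE)\in\CC_2(\EE^{\op})$, so the functor lands where claimed.

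Next I would identify the composite. Running the same computation once more, starting from the displayed presentation of $\Ext^2_\EE(M,\EE)$ by representables in $\Mod\EE^{\op}$, produces a four-term exact sequence computing $\Ext^2_{\EE^{\op}}(\Ext^2_\EE(M,\EE),\EE^{\op})$, and comparing it with the original presentation $0\to P_X\to P_Y\to P_Z\to M\to 0$ of $M$ gives a natural isomorphism $\Ext^2_{\EE^{\op}}(\Ext^2_\EE(M,\EE),\EE^{\op})\iso M$. The cleanest way to get naturality is to note that all the connecting maps involved are the canonical ones, so the biduality map $M\to\Ext^2_{\EE^{\op}}(\Ext^2_\EE(M,\EE),\EE^{\op})$ — which exists for formal reasons — is an isomorphism on $\CC_2(\EE)$; symmetrically on $\CC_2(\EE^{\op})$. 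This establishes that the two functors are mutually quasi-inverse, hence give a duality of additive categories. Finally, since $\CC_2(\EE)$ and $\CC_2(\EE^{\op})$ are Serre subcategories of the abelian categories $\mod\EE$ and $\mod\EE^{\op}$ wherever that is relevant (but in any case they inherit exact structures as extension-closed subcategories, or rather here one only needs the ambient abelian exact structures restricted to them), an exact-category equivalence is automatic: a contravariant equivalence between (essentially small) abelian-type categories sends kernel-cokernel pairs to kernel-cokernel pairs, so the duality is exact.

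The main obstacle I anticipate is the bookkeeping in the first step: extracting the four-term sequence for $\Ext^2_\EE(M,\EE)$ and, simultaneously, the vanishing of $\Ext^0$ and $\Ext^1$ of it, from the presentation of $M$. Concretely, writing $0\to P_X\to P_Y\to N\to 0$ and $0\to N\to P_Z\to M\to 0$ with $N=\ker(P_Z\to M)$, one applies $\Hom_\EE(-,\EE)$ to both; the point is that $\Hom_\EE(N,\EE)$ and $\Ext^1_\EE(N,\EE)$ have to be pinned down using the hypotheses $\Ext^{0,1}_\EE(M,\EE)=0$ together with $\Ext^{\geq 1}_\EE(P_Z,\EE)=0$, and one must also check $\Ext^{\geq 3}_\EE(M,\EE)=0$ to conclude $\pd N\leq 1$ on the dual side — all of which follows since $\pd M\leq 2$ forces $\pd N\leq 1$, so $\Ext^{\geq 2}_\EE(N,\EE)=0$. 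Once these vanishings are in hand the rest is formal. I would also remark that Lemma \ref{Schanuel} is not strictly needed here but guarantees that the objects $X,Y,Z$ appearing in the presentations can be taken in $\EE$ (using idempotent completeness), which is what makes the dual presentation by representables $P^X,P^Y,P^Z$ meaningful.
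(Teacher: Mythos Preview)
Your proposal is correct and is precisely the standard argument; the paper itself simply cites \cite[6.2(1)]{tau3}, where the same computation is carried out. Two minor remarks: first, $\CC_2(\EE)$ is not in general a Serre subcategory of $\mod\EE$ (cf.\ Lemma~\ref{tokubetsu}), but as you immediately note it is extension-closed in $\Mod\EE$, which is all that is needed for the exact structure and for the exactness of $\Ext^2_\EE(-,\EE)$ via the long exact sequence (using $\Ext^1_\EE(M_1,\EE)=0$ and $\Ext^3_\EE(M_2,\EE)=0$); second, idempotent completeness is neither assumed nor needed for this lemma, since a presentation of $M$ by representables $P_X,P_Y,P_Z$ is built into the definition of $\CC_2(\EE)$, so your closing remark about Lemma~\ref{Schanuel} is superfluous.
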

\begin{proof}
The same proof as in \cite[6.2(1)]{tau3} applies here.
\end{proof}

By definition, the category $\CC_2(\EE)$ is closely related to kernel-cokernel pairs of $\EE$. Indeed, we have two maps between them, as we shall construct below.

Suppose that $F$ is a class of kernel-cokernel pairs in $\EE$. We say that a complex $X \xrightarrow{f} Y \xrightarrow{g} Z$ is an \emph{$F$-exact} if it belongs to $F$. In this case, we say that $f$ is an \emph{$F$-monomorphism} and that $g$ is an \emph{$F$-epimorphism}.

\begin{definition}\label{maps}
\begin{enumerate}[leftmargin=*]
\item
 Let $\DD$ be a subcategory of $\CC_2(\EE)$. We denote by $F(\DD)$ the class of all complexes $X \xrightarrow{f} Y \xrightarrow{g} Z$ which satisfy the following condition:\\
 \emph{There exists an exact sequence $0 \to P_X \xrightarrow{P_f} P_Y \xrightarrow{P_g} P_Z \to M \to 0$ in $\Mod\EE$ such that $M$ belongs to $\DD$.}
 \item
 Let $F$ be a class of kernel-cokernel pairs in $\EE$. We denote by $\DD(F)$ the subcategory of $\Mod\EE$ consisting of all objects $M$ which satisfy the following condition:\\
 \emph{There exists an $F$-exact complex $X \xrightarrow{f} Y \xrightarrow{g} Z$ satisfying $M \iso \coker (P_g)$.}
\end{enumerate}
\end{definition}
The category $\DD(F)$ can be seen as the category of \emph{contravariant defects} of $F$-exact sequences, in the sense of Auslander (see \cite[IV.4]{ars}).
Note that by Lemma \ref{kctokuchou}, the following hold.
\begin{enumerate}
\item[$\bullet$] Every complex $X \xrightarrow{f} Y \xrightarrow{g} Z$ in $F(\DD)$ is a kernel-cokernel pair.
\item[$\bullet$] Every object $M$ in $\DD(F)$ is  in $\CC_2(\EE)$.
\end{enumerate}

\subsection{Main theorem}
In this subsection, we will state our main theorem and give a proof.
Recall that a \emph{Serre subcategory} of an exact category $\EE$ is an additive subcategory $\DD$  of $\EE$ such that for all conflations $L \infl M \defl N$ in $\EE$, the object $M$ belongs to $\DD$ if and only if both $L$ and $N$ belong to $\DD$.

\begin{theorem}\label{main}
Let $\EE$ be an idempotent complete additive category. Then there exist mutually inverse bijections between the following two classes.
\begin{enumerate}
 \item Exact structures $F$ on $\EE$.
 \item Subcategories $\DD$ of $\CC_2(\EE)$ satisfying the following conditions.
 \begin{enumerate}
  \item $\DD$ is a Serre subcategory of $\mod\EE$.
  \item $\Ext^2_\EE(\DD,\EE)$ is a Serre subcategory of $\mod\EE^{\op}$.
 \end{enumerate}
 \end{enumerate}
 The map from $(1)$ to $(2)$ is given by $F \mapsto \DD(F)$ and from $(2)$ to $(1)$ by $\DD \mapsto F(\DD)$ (see Definition \ref{maps}).
\end{theorem}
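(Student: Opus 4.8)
The proof naturally splits into two halves: showing each of the two maps is well-defined, and showing they are mutually inverse. For the well-definedness of $\DD \mapsto F(\DD)$, I would take a subcategory $\DD$ satisfying (a) and (b) and verify the Quillen axioms for $F(\DD)$. The key translation is Proposition~\ref{aiu}: a complex lies in $F(\DD)$ iff it is a kernel-cokernel pair whose ``contravariant defect'' $\coker(P_g)$ lies in $\DD$. So each axiom should become a statement about short exact sequences of $\EE$-modules. For instance, the closure of $F(\DD)$-epimorphisms under composition, and the axiom that pullbacks/pushouts of inflations/deflations exist and stay in the class, should follow from the fact that $\DD$ is Serre in $\mod\EE$ (so closed under subobjects, quotients, and extensions): given two composable deflations, the Noether-type lemma assembles a short exact sequence of defects $0 \to M_1 \to M_{12} \to M_2 \to 0$, and the Serre property puts $M_{12}$ in $\DD$. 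The condition (b), that $\Ext^2_\EE(\DD,\EE)$ is Serre in $\mod\EE^{\op}$, is what one needs for the ``dual'' half of the axioms — the ones about inflations — exploiting the duality $\Ext^2_\EE(-,\EE): \CC_2(\EE) \to \CC_2(\EE^{\op})$ of Lemma~\ref{e2}, which carries $\DD$ to a subcategory of $\CC_2(\EE^{\op})$ satisfying the symmetric conditions. I would also need that $\ker P_g$ and $\ker P_f$ of maps realizing the required defects are again representable; this is exactly the content of Lemma~\ref{Schanuel}, together with idempotent completeness.

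For well-definedness of $F \mapsto \DD(F)$, I would start from an exact structure $F$ and check that $\DD(F)$ is a Serre subcategory of $\mod\EE$ and that $\Ext^2_\EE(\DD(F),\EE)$ is Serre in $\mod\EE^{\op}$. Again everything is governed by Proposition~\ref{aiu}: $\DD(F)$ consists of the defects $\coker(P_g)$ as $g$ ranges over $F$-deflations, and all such objects already lie in $\CC_2(\EE)$ by Lemma~\ref{kctokuchou}. Closure of $\DD(F)$ under subobjects, quotients, and extensions inside $\mod\EE$ should be read off from the exact-category axioms for $F$: given a conflation of defects, I want to realize the middle term as the defect of an $F$-conflation built from the outer two, using pullback/pushout constructions in the exact category $\EE$ and the obscure axiom (or its consequences). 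Additive closure and the direct-summand closure come from Lemma~\ref{summandclosed}. The dual Serre condition for $\Ext^2_\EE(\DD(F),\EE)$ then follows by applying the same analysis in $\EE^{\op}$ — noting $F$ gives an exact structure on $\EE^{\op}$ — and transporting along the duality of Lemma~\ref{e2}.

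For the bijectivity, I would verify $F(\DD(F)) = F$ and $\DD(F(\DD)) = \DD$ directly. The inclusion $F \subseteq F(\DD(F))$ and $\DD \subseteq \DD(F(\DD))$ are essentially formal from the definitions once well-definedness is in place. For the reverse inclusions: if $X \xrightarrow{f} Y \xrightarrow{g} Z$ lies in $F(\DD(F))$, then its defect $M := \coker(P_g)$ lies in $\DD(F)$, so $M \iso \coker(P_{g'})$ for some $F$-deflation $g'$; I then need to conclude that the original complex is itself in $F$, which should follow from the fact that a kernel-cokernel pair is determined up to the relevant equivalence by its defect (again via Proposition~\ref{aiu}(2) and a Schanuel/lifting argument using idempotent completeness), together with the closure properties of the exact structure $F$. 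The equality $\DD(F(\DD)) = \DD$ is comparatively easy: an object of $\DD(F(\DD))$ is the defect of an $F(\DD)$-conflation, which by construction has defect in $\DD$; and $\DD$ itself consists of objects of $\CC_2(\EE)$, each of which (by Lemma~\ref{kctokuchou}) is the defect of an honest kernel-cokernel pair, which then lies in $F(\DD)$ by definition.

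\textbf{Main obstacle.} The delicate point is the verification of the pullback/pushout axioms for $F(\DD)$ and, dually, that $\DD(F)$ is closed under sub- and quotient objects: one must reconstruct $\EE$-level conflations from short exact sequences of their defects in $\mod\EE$, which requires that the relevant kernels stay representable (Lemma~\ref{Schanuel} and idempotent completeness) and that the ``obscure axiom'' is correctly packaged. Getting the interplay between the covariant Serre condition (a) and the contravariant one (b) exactly right — so that both families of axioms (those about deflations and those about inflations) come out — is where the real work lies; the duality $\Ext^2_\EE(-,\EE)$ of Lemma~\ref{e2} is the tool that keeps the bookkeeping symmetric.
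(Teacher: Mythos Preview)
Your proposal is correct and follows essentially the same approach as the paper: the paper verifies the Quillen axioms (Ex0)--(Ex2) for $F(\DD)$ from the Serre property exactly as you outline (Proposition~\ref{keyprop}, (2)$\Rightarrow$(1)), and proves $\DD(F)$ is Serre from the exact-category axioms via pullbacks and the $3\times 3$ lemma (Proposition~\ref{keyprop}, (1)$\Rightarrow$(2)). The one organizational difference is that the paper first establishes the bijectivity $F(\DD(F))=F$ and $\DD(F(\DD))=\DD$ at the level of a broader correspondence (Proposition~\ref{bij}: classes of kernel-cokernel pairs closed under homotopy equivalence, direct sums, and summands $\leftrightarrow$ summand-closed subcategories of $\CC_2(\EE)$), so that the ``mutually inverse'' part is decoupled from the ``exact $\Leftrightarrow$ Serre'' part; your Schanuel/lifting argument for $F(\DD(F))\subset F$ is exactly this homotopy-equivalence step.
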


To prove this, we first show that the maps in Definition \ref{maps} induces a one-to-one correspondence between wider classes than in Theorem \ref{main}.

\begin{proposition}\label{bij}
Let $\EE$ be an additive category. Then the maps in Definition \ref{maps} induce mutually inverse bijections between the following two classes.
\begin{enumerate}
 \item Classes of kernel-cokernel pairs $F$ in $\EE$ satisfying the following conditions.
 \begin{enumerate}
  \item $F$ is closed under homotopy equivalences of complexes.
  \item $F$ is closed under direct sums of complexes.
  \item $F$ is closed under direct summands of complexes.
  \item $F$ is not empty.
 \end{enumerate}
 \item Subcategories $\DD$ of $\CC_2(\EE)$ satisfying the following condition.
 \begin{enumerate}
  \item $\DD$ is closed under direct sums.
  \item $\DD$ is closed under direct summands.
  \item $\DD$ is not empty.
 \end{enumerate}
 \end{enumerate}
\end{proposition}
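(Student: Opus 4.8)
The plan is to isolate a single lemma and then obtain everything by bookkeeping. The lemma is: \emph{if $\delta\colon X\xrightarrow{f}Y\xrightarrow{g}Z$ and $\delta'\colon X'\xrightarrow{f'}Y'\xrightarrow{g'}Z'$ are kernel-cokernel pairs in $\EE$ with $\coker(P_g)\cong\coker(P_{g'})$ in $\Mod\EE$, then $\delta$ and $\delta'$ are homotopy equivalent as complexes over $\EE$.} Before using it I would record some trivialities: an isomorphism of complexes is a homotopy equivalence, so a class $F$ as in $(1)$ is closed under isomorphisms; the zero complex $0\to0\to0$ is a direct summand of every complex, so $(1)(\mathrm c)$, $(1)(\mathrm d)$ force it to lie in $F$, and dually $(2)(\mathrm b)$, $(2)(\mathrm c)$ force $0\in\DD$; a direct summand of an exact complex in $\Mod\EE$ is exact, since homology is additive; and the Yoneda embedding $P_{(-)}\colon\EE\to\Mod\EE$ is additive and fully faithful, so a homotopy equivalence between complexes of representable functors is the image of a homotopy equivalence between the corresponding complexes over $\EE$.

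For the lemma, observe that by Proposition~\ref{aiu}(2) the sequence $0\to P_X\xrightarrow{P_f}P_Y\xrightarrow{P_g}P_Z\to M\to 0$ is exact for $M:=\coker(P_g)$ (the contravariant defect of $\delta$); since representable functors are projective in $\Mod\EE$, the complex $[P_X\to P_Y\to P_Z]$ with its augmentation onto $M$ is a projective resolution of $M$, and likewise $[P_{X'}\to P_{Y'}\to P_{Z'}]$ is a projective resolution of $M':=\coker(P_{g'})$. A chosen isomorphism $M\xrightarrow{\sim}M'$ and its inverse lift, by the comparison theorem for projective resolutions, to chain maps between the two resolutions; their two composites lift the identities of $M$ and of $M'$, hence, by the uniqueness part of the comparison theorem, are chain homotopic to the respective identity maps, so the two resolutions are homotopy equivalent. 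As this homotopy equivalence consists entirely of morphisms between representable functors, full faithfulness of $P_{(-)}$ carries it back to a homotopy equivalence $\delta\simeq\delta'$ over $\EE$. I expect this lemma to be the only real obstacle; note that it uses neither idempotent completeness of $\EE$ nor any finiteness.

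Granting the lemma, I would verify that both maps are well defined. For $\DD\mapsto F(\DD)$: every complex in $F(\DD)$ is a kernel-cokernel pair by Lemma~\ref{kctokuchou} (as $\DD\subseteq\CC_2(\EE)$); closure under homotopy equivalences holds because $P_{(-)}$ takes such an equivalence to a quasi-isomorphism, which shows the exact sequence witnessing membership of $\delta$ persists, with the same $M\in\DD$, for any complex equivalent to $\delta$; closure under direct sums and under direct summands reduces, via $P_{X_1\oplus X_2}\cong P_{X_1}\oplus P_{X_2}$ and additivity of cokernels and of homology, to the corresponding closure properties of $\DD$ (invoking Lemma~\ref{summandclosed} so that summands of $\CC_2(\EE)$-objects remain in $\CC_2(\EE)$); and $0\to0\to0$ lies in $F(\DD)$ since $0\in\DD$. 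For $F\mapsto\DD(F)$: the containment $\DD(F)\subseteq\CC_2(\EE)$ and nonemptiness are immediate from Lemma~\ref{kctokuchou}; closure under direct sums uses $(1)(\mathrm b)$; and closure under direct summands is where the lemma enters. If $M\cong M_1\oplus M_2$ lies in $\DD(F)$, then $M_1,M_2\in\CC_2(\EE)$ by Lemma~\ref{summandclosed}, so Lemma~\ref{kctokuchou} provides kernel-cokernel pairs $\delta_i\colon X_i\to Y_i\xrightarrow{g_i}Z_i$ with $\coker(P_{g_i})\cong M_i$; then $\delta_1\oplus\delta_2$ has defect $\cong M$, so by the lemma it is homotopy equivalent to an $F$-exact complex realizing $M$, whence $\delta_1\oplus\delta_2\in F$ by $(1)(\mathrm a)$ and then $\delta_1\in F$ by $(1)(\mathrm c)$, so $M_1\in\DD(F)$.

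Finally, the maps are mutually inverse. The equality $\DD(F(\DD))=\DD$ needs no lemma: ``$\supseteq$'' is Lemma~\ref{kctokuchou}, and for ``$\subseteq$'' one notes that if $M\cong\coker(P_g)$ for an $F(\DD)$-exact $\delta$, then the exact sequence witnessing $\delta\in F(\DD)$ identifies $M$ with the module of $\DD$ appearing in it. For $F(\DD(F))=F$, the inclusion ``$\supseteq$'' is immediate from the definitions (every $F$-exact complex is a kernel-cokernel pair) together with Lemma~\ref{kctokuchou}, while for ``$\subseteq$'' a complex in $F(\DD(F))$ has defect isomorphic to that of some $F$-exact complex, so the lemma and $(1)(\mathrm a)$ place it in $F$. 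Thus the only delicate point is the lemma — namely, being careful that the length-two projective resolutions of the defects are resolutions in the abelian category $\Mod\EE$ rather than in $\EE$, that the comparison theorem applies to them, and that the resulting homotopy equivalence descends through the Yoneda embedding; everything else is routine.
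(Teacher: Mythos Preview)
Your proof is correct and follows essentially the same route as the paper's own proof: the paper also reduces both nontrivial steps (closure of $\DD(F)$ under summands and the inclusion $F(\DD(F))\subset F$) to the observation that two kernel-cokernel pairs with isomorphic defects yield homotopy-equivalent projective resolutions in $\Mod\EE$, and then pulls this back through Yoneda. The only organizational difference is that you isolate this observation as a named lemma and verify the map $\DD\mapsto F(\DD)$ explicitly, whereas the paper invokes the comparison argument inline twice and leaves the direction $(2)\to(1)$ to the reader.
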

\begin{proof}
First we see that the maps in Definition \ref{maps} induces well-defined maps between (1) and (2).

(1) $\to$ (2):
Suppose that $F$ is a class of kernel-cokernel pairs in $\EE$ satisfying the conditions of (1). We will see that $\DD(F)$ satisfies the conditions of (2).
Clearly $\DD(F)$ satisfies (a) and (c) by the conditions (1)(b) and (1)(d) respectively. Hence it suffices to show that $\DD$ is closed under direct summands.

Suppose that $M_1 \oplus M_2$ is in $\DD(F)$. Then $M_1 \oplus M_2$ is an object of $\CC_2(\EE)$, which is closed under summands by Lemma \ref{summandclosed}. Therefore both $M_1$ and $M_2$ are in $\CC_2(\EE)$. This gives exact sequences
\[
0 \to P_{X_i} \xrightarrow{P_{f_i}} P_{Y_i} \xrightarrow{P_{g_i}} P_{Z_i} \to M_i \to 0
\]
for some kernel-cokernel pairs $X_i \xrightarrow{f_i} Y_i \xrightarrow{g_i} Z_i$ in $\EE$ with $i=1,2$. By taking the direct sum of these two complexes, we obtain a kernel-cokernel pair
\begin{equation}\label{areare1}
X_1\oplus X_2 \xrightarrow{f_1 \oplus f_2} Y_1\oplus Y_2 \xrightarrow{g_1 \oplus g_2} Z_1 \oplus Z_2.
\end{equation}
Thereby we obtain the following exact sequence.
\[
0 \to P_{X_1\oplus X_2} \xrightarrow{P_{f_1}\oplus P_{f_2}} P_{Y_1\oplus Y_2} \xrightarrow{P_{g_1} \oplus P_{g_2}} P_{Z_1\oplus Z_2} \to M_1 \oplus M_2 \to 0
\]
On the other hand, since $M_1 \oplus M_2$ is in $\DD(F)$, there exists an $F$-exact complex
\begin{equation}\label{areare2}
X \xrightarrow{f} Y \xrightarrow{g} Z
\end{equation} in $\EE$ such that
\[
0 \to P_X \xrightarrow{P_f} P_Y \xrightarrow{P_g} P_Z \to M_1 \oplus M_2 \to 0
\]
is exact. It is standard that two projective resolutions are homotopy equivalent to each other. Thus the Yoneda lemma implies that (\ref{areare1}) and (\ref{areare2}) are homotopy equivalent to each other as three-term complexes in $\EE$. Because $F$ is closed under homotopy equivalences, (\ref{areare1}) is $F$-exact. By the condition (1)(c), we conclude that each $g_i$ is an $F$-epimorphism, which shows that $M_1$ and $M_2$ are in $\DD(F)$.

(2) $\to$ (1):
This is immediate and we leave the details to the reader.

Now we will show that the maps in Definition \ref{maps} are in fact inverse to each other.
It is easy to see that $\DD = \DD(F(\DD))$ in general. (Note that \emph{subcategories} are always assumed to be closed under isomorphisms.)

Suppose $F$ is a class of kernel-cokernel pairs in $\EE$ satisfying the conditions of $(1)$. Put $\DD := \DD(F)$. Then clearly we have $F(\DD) \supset F$. We will prove $F(\DD) \subset F$.

Suppose that we have an $F(\DD)$-exact sequence
\begin{equation}\label{areare3}
X \xrightarrow{f} Y \xrightarrow{g} Z.
\end{equation}
Put $M:= \coker(P_g)$. Then $M$ is contained in $\DD$. Thus there exists an $F$-exact sequence
\begin{equation}\label{areare4}
X' \xrightarrow{f'} Y' \xrightarrow{g'} Z'
\end{equation}
such that
\[
0 \to P_{X'} \xrightarrow{P_{f'}} P_{Y'} \xrightarrow{P_{g'}} P_{Z'} \to M \to 0
\]
is exact. Since the Yoneda embedding of (\ref{areare3}) also yields a projective resolution for $M$, it follows that (\ref{areare3}) and (\ref{areare4}) are homotopy equivalent to each other. Therefore (\ref{areare3}) is an $F$-exact sequence, since $F$ is closed under homotopy equivalences.
\end{proof}

Now we are in the position to prove Theorem \ref{main}. First we recall the axiom of exact categories following \cite[Definition 2.1]{buhler}.
Let $F$ be a class of kernel-cokernel pairs in $\EE$ which is closed under isomorphisms. Consider the following conditions.
\begin{enumerate}
\item[(Ex0)] For all objects $X \in\EE$, the complex $0\to X = X$ is $F$-exact.
\item[(Ex1)] The class of $F$-epimorphisms are closed under compositions.
\item[(Ex2)] The pullback of an $F$-epimorphisms along an arbitrary morphism exists and yields an $F$-epimorphism.
\end{enumerate}
We say that \emph{$(\EE,F)$ is an exact category} if $F$ satisfies (Ex0)-(Ex2) and (Ex0)$^{\op}$-(Ex2)$^{\op}$. In this case, $F$ is called an \emph{exact structure} on $\EE$, and we just call $\EE$ an exact category if $F$ is clear from context. In an exact category $(\EE,F)$, we use the terminologies \emph{conflations}, \emph{deflations} and \emph{inflations} instead of $F$-exact sequences, $F$-epimorphisms and $F$-monomorphisms respectively.

\begin{lemma}\label{ok}
Let $(\EE,F)$ be an exact category. Then $F$ satisfies all the conditions of Proposition \ref{bij}(1).
\end{lemma}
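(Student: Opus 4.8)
The statement to prove is Lemma \ref{ok}: if $(\EE, F)$ is an exact category, then $F$ satisfies conditions (a)--(d) of Proposition \ref{bij}(1), i.e.\ $F$ is closed under homotopy equivalences, under direct sums, under direct summands, and is nonempty. The plan is to verify these four conditions one at a time, using the Quillen axioms (Ex0)--(Ex2) together with their opposites, and the standard structure theory of exact categories from \cite{buhler}.

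First, nonemptiness (d) is immediate: by (Ex0), the complex $0 \to X = X$ is $F$-exact for every $X \in \EE$, so $F \neq \varnothing$. Closure under direct sums (b) is also standard: given two conflations $X_i \infl Y_i \defl Z_i$ for $i=1,2$, one checks that $X_1 \oplus X_2 \infl Y_1 \oplus Y_2 \defl Z_1 \oplus Z_2$ is again a conflation. This is a known consequence of the axioms (it appears as \cite[Remark 2.4 / Exercise 2.6]{buhler} or can be derived by pulling back/pushing out along the obvious inclusions and projections and using that inflations and deflations compose appropriately); I would cite it rather than reprove it, or give the two-line argument that the direct sum of a deflation with an identity is a deflation by (Ex2) applied to the projection, and then compose.

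The two genuinely substantive points are (a) closure under homotopy equivalences and (c) closure under direct summands of complexes. For (c), I would use the key fact (the ``obscure axiom,'' established in \cite[Proposition 2.16]{buhler} as a consequence of (Ex0)--(Ex2)) that in any exact category, a morphism that admits a cokernel and is a direct summand (as an arrow) of an inflation is itself an inflation; dually for deflations. Concretely: if $X_1 \oplus X_2 \to Y_1 \oplus Y_2 \to Z_1 \oplus Z_2$ is a conflation that splits as a direct sum of complexes, then each $g_i : Y_i \to Z_i$ is a retract of the deflation $g_1 \oplus g_2$ in the arrow category; since $g_i$ has a cokernel (namely $0$, as it is split epi on the cokernel level — more precisely $f_i$ is its kernel), the summand axiom forces $g_i$ to be a deflation, and dually $f_i$ an inflation, so each $X_i \to Y_i \to Z_i$ is a conflation. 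This is the step I expect to be the main obstacle, precisely because it rests on the nontrivial ``obscure axiom''; everything else is bookkeeping.

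For (a), closure under homotopy equivalences: it suffices to show that if $X \xrightarrow{f} Y \xrightarrow{g} Z$ is $F$-exact and $X' \xrightarrow{f'} Y' \xrightarrow{g'} Z'$ is homotopy equivalent to it as a three-term complex, then the latter is $F$-exact. A homotopy equivalence of three-term complexes is built from isomorphisms up to adding contractible complexes; since $F$ is closed under isomorphisms by hypothesis, it remains to handle the addition of contractible three-term complexes. A contractible three-term complex is a direct sum of complexes of the shapes $0 \to A = A$, $A = A \to 0$, and $A \to A \oplus B \to B$ with the split maps — all of which are $F$-exact (by (Ex0), (Ex0)$^{\op}$, and the split conflation which is $F$-exact in any exact category), and then one closes under the direct sum property (b) already established. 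Combined with closure under direct summands (c) to pass back down after adding a contractible complement, this yields that $F$ is closed under homotopy equivalence. I would present (a) after (b) and (c) since it uses both. Throughout, the only real input beyond elementary diagram-chasing is Bühler's summand axiom, so I would foreground that citation.
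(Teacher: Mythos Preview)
Your proposal is correct. For (b), (c), (d) you do essentially what the paper does: (d) is immediate from (Ex0), and (b), (c) are obtained by citing B\"uhler (the paper cites \cite[Proposition 2.9]{buhler} for direct sums and \cite[Corollary 2.18]{buhler} for direct summands; the latter is exactly the consequence of the obscure axiom you invoke via \cite[Proposition 2.16]{buhler}).

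The one genuine difference is in (a). The paper dispatches closure under homotopy equivalence in one line by appealing to the Gabriel--Quillen embedding: once $\EE$ sits as an extension-closed subcategory of an abelian category with conflations the short exact sequences lying in $\EE$, homotopy-equivalent three-term complexes have the same exactness status because this is true in the ambient abelian category. Your route is more elementary and self-contained: you use the standard fact that homotopy-equivalent bounded complexes become isomorphic after adding contractible summands, then combine (b), (c), and the $F$-exactness of split complexes. Both are valid; the paper's approach is shorter if one is willing to import the embedding theorem, while yours stays entirely within the axiomatics and makes transparent that (a) is really a formal consequence of (b) and (c) together with (Ex0)/(Ex0)$^{\op}$.
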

\begin{proof}
The condition (1)(d) is satisfied by definition. The condition (1)(a) can be proved by the Gabriel-Quillen embedding. We refer the reader to \cite[Proposition 2.9]{buhler} and \cite[Corollary 2.18]{buhler} for the conditions (1)(b) and (1)(c) respectively.
\end{proof}

The following proposition is a technical part of the proof of Theorem \ref{main}.
\begin{proposition}\label{keyprop}
Suppose that $\EE$ is idempotent complete and a class $F$ satisfies the conditions of Proposition \ref{bij} $(1)$. Put $\DD:=\DD(F)$. Then the following are equivalent.
\begin{enumerate}
\item $F$ is an exact structure on $\EE$.
\item $\DD$ is a Serre subcategory of $\mod\EE$ and $\Ext^2_\EE(\DD,\EE)$ is a Serre subcategory of $\mod\EE^{\op}$.
\end{enumerate}
\end{proposition}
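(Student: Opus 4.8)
The plan is to build on Proposition~\ref{bij} --- which already matches the weak closure conditions on $F$ with the sum/summand/non-emptiness conditions on $\DD(F)$ --- and to show that the remaining exact-category axioms translate precisely into the two Serre conditions on $\DD:=\DD(F)$ and on $\Ext^2_\EE(\DD,\EE)$. For a kernel-cokernel pair $\eta\colon X\xrightarrow{f}Y\xrightarrow{g}Z$ I write $M_\eta:=\coker(P_g)\in\CC_2(\EE)$, and I use freely that $\eta\mapsto M_\eta$ is essentially surjective onto $\CC_2(\EE)$ with fibres the homotopy-equivalence classes of three-term complexes, that $\DD(F)=\{M_\eta:\eta\in F\}$, and that $F(\DD(F))=F$ (Proposition~\ref{bij}, Lemma~\ref{kctokuchou}). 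Two elementary devices recur: (i) the Yoneda embedding preserves limits, so a pullback formed in $\Mod\EE$ lives in $\EE$ as soon as its total object is finitely generated projective, and by Lemma~\ref{Schanuel} and idempotent completeness this holds once the cokernel of the defining map lies in $\CC_2(\EE)$; and (ii) projectivity of representable functors plus the Yoneda lemma turn a module map $P_{Z'}\to M_\eta$ into a genuine morphism $Z'\to Z$ in $\EE$. Finally, (Ex0) and (Ex0)$^{\op}$ are automatic: for $\eta\in F$ the complex $\eta\oplus(0\to X=X)$ has the same defect as $\eta$, hence lies in $F(\DD(F))=F$, and $(0\to X=X)$, being a direct summand of it, lies in $F$ by closure under summands of complexes.

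I then reduce the proposition to the single equivalence
\[
\DD(F)\ \text{is a Serre subcategory of}\ \mod\EE \iff F\ \text{satisfies (Ex1) and (Ex2).}
\]
Applying $\Hom_\EE(-,\EE)$ to the resolution $0\to P_X\xrightarrow{P_f}P_Y\xrightarrow{P_g}P_Z\to M_\eta\to 0$ and using $\Ext^i_\EE(M_\eta,\EE)=0$ for $i=0,1$ gives $\Ext^2_\EE(M_\eta,\EE)=\coker(P^f)$, which is exactly the contravariant defect of the reversed pair $\eta^{\op}$ formed in $\Mod\EE^{\op}$; hence $\Ext^2_\EE(\DD(F),\EE)=\DD(F^{\op})$ as subcategories of $\mod\EE^{\op}$ (consistently with Lemma~\ref{e2}). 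Since the conditions of Proposition~\ref{bij}(1) and the exact-category axioms are stable under passing to $\EE^{\op}$ and reversing complexes, applying the displayed equivalence to $(\EE^{\op},F^{\op})$ yields ``$\DD(F^{\op})$ is Serre in $\mod\EE^{\op}$ $\iff$ $F$ satisfies (Ex1)$^{\op}$ and (Ex2)$^{\op}$''. Conjoining the two equivalences and adding the automatic (Ex0), (Ex0)$^{\op}$ recovers Proposition~\ref{keyprop}.

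The core is a dictionary between canonical diagrams of conflations in $\EE$ and short exact sequences of defect modules in the abelian category $\Mod\EE$. For the implication ``(Ex1), (Ex2) $\Rightarrow$ $\DD$ Serre'' I would treat the three closure properties in turn. \emph{Subobjects:} given a deflation $g\colon Y\to Z$ and a finitely generated $L\hookrightarrow M_g$, lift $P_{Z'}\twoheadrightarrow L\hookrightarrow M_g$ through $P_Z\twoheadrightarrow M_g$ to obtain $a\colon Z'\to Z$, form the pullback $g'\colon Y'\to Z'$ of $g$ along $a$ by (Ex2), and read off from the Yoneda image of the pullback square that $\coker(P_{g'})\cong(\im P_a+\im P_g)/\im P_g\cong L$, so $L\in\DD$. \emph{Quotients:} for a finitely generated quotient $N$ of $M_g$ with kernel $L$, lift $P_W\twoheadrightarrow L\hookrightarrow M_g$ to $w\colon W\to Z$ and note that $[\,g\ \ w\,]\colon Y\oplus W\to Z$ factors as $[\,1_Z\ \ w\,]\circ(g\oplus 1_W)$, a composite of deflations, hence a deflation by (Ex1), with defect $M_g/L\cong N$. \emph{Extensions:} given $\eta_1,\eta_2\in F$ and $0\to M_{\eta_1}\to M\to M_{\eta_2}\to 0$ in $\mod\EE$ --- so $M\in\CC_2(\EE)$, since $\CC_2(\EE)$ is extension-closed in $\Mod\EE$ by an easy check with the long exact $\Ext_\EE^\bullet(-,\EE)$-sequence and the horseshoe lemma --- the horseshoe lemma presents $M$ as the defect of an upper-triangular map $Y_1\oplus Y_2\to Z_1\oplus Z_2$, which is again a composite of deflations and an isomorphism, hence a deflation by (Ex1). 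Conversely, for ``$\DD$ Serre $\Rightarrow$ (Ex1), (Ex2)'': the snake lemma shows that a composite $hg$ of deflations has $\coker(P_{hg})$ fitting in $0\to Q\to\coker(P_{hg})\to\coker(P_h)\to 0$ with $Q$ a quotient of $\coker(P_g)$, so $\coker(P_{hg})\in\DD$ by quotient- and extension-closure, which gives (Ex1); and for (Ex2) one first constructs the pullback inside $\EE$ (see below) and then identifies its defect with a subobject of $\coker(P_g)$, lying in $\DD$ by subobject-closure. In every case ``defect in $\DD(F)$'' is promoted to ``kernel-cokernel pair in $F$'' via $F(\DD(F))=F$ (using Proposition~\ref{aiu}(2) to see the constructed complex is a kernel-cokernel pair), and the ancillary kernels are kept representable by Lemma~\ref{Schanuel}.

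The step I expect to be the main obstacle is the existence-and-representability clause of (Ex2) in the direction ``$\DD$ Serre $\Rightarrow$ $F$ exact'': one must produce the pullback $Y\times_Z Z'$ as an object of $\EE$, not merely of $\Mod\EE$. Concretely this reduces to showing that $\ker\!\bigl(P_Y\oplus P_{Z'}\to\im P_g+\im P_a\bigr)$ is finitely generated projective, which forces one to know that $M_g/L\in\CC_2(\EE)$ --- i.e. it uses closure of $\DD$ under \emph{both} subobjects and quotients --- before Lemma~\ref{Schanuel} and idempotent completeness can be applied. More broadly, the recurring delicate point throughout the dictionary is that each universal construction (pullback, kernel of a composite, horseshoe) is first performed in the abelian category $\Mod\EE$ and must then be descended to $\EE$ by certifying representability, all while matching the defect modules only up to homotopy equivalence --- which is legitimate because $F$ is homotopy-closed. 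By contrast, the purely homological ingredients (extension-closedness of $\CC_2(\EE)$, the $\Ext^2$-duality of Lemma~\ref{e2}) are routine.
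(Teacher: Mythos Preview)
Your proposal is correct and follows essentially the same route as the paper: reduce by the duality $\Ext^2_\EE(\DD(F),\EE)=\DD(F^{\op})$ to the single equivalence ``$\DD$ Serre $\iff$ (Ex1)+(Ex2)'', then translate each closure property and each axiom through the defect-module dictionary, with Lemma~\ref{Schanuel} supplying representability of the relevant kernels. The arguments for subobjects, (Ex1), and (Ex2) match the paper's almost verbatim.

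The one tactical difference worth noting is in the extension step of $(1)\Rightarrow(2)$: the paper builds the horseshoe diagram and then invokes the $3\times 3$ lemma in exact categories (\cite[Corollary~3.6]{buhler}) to conclude that the middle row is a conflation, whereas you factor the upper-triangular horseshoe map as a composite of $g_1\oplus 1$, an isomorphism, and $1\oplus g_2$, and appeal directly to (Ex1). Your factorisation is slightly more elementary (it avoids citing the $3\times 3$ lemma) and has the pleasant feature that all three closure properties --- subobjects, quotients, extensions --- are then deduced from (Ex1) and (Ex2) in a uniform way; the paper's use of the $3\times 3$ lemma is perhaps more conceptual but relies on a result whose proof itself uses the full exact-category axioms. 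Similarly, for quotients the paper recognises $[g,\varphi]$ as the deflation in the canonical pullback conflation (\cite[Proposition~2.12]{buhler}), while you obtain it as a composite of a split epimorphism with $g\oplus 1$; these are the same morphism viewed from two angles.
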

\begin{proof}
(1) $\Rightarrow$ (2):
First we show that $\DD$ is a Serre subcategory of $\mod\EE$.
Let
\[
0 \to M_1 \to M \xrightarrow{k} M_2 \to 0
\]
be a short exact sequence in $\mod\EE$.

Assume that $M_1$ and $M_2$ are in $\DD$. We will see that $M \in \DD$. By the definition of $\DD$, there exists an $F$-exact sequence $X_i \xrightarrow{f_i} Y_i \xrightarrow{g_i} Z_i$ such that
\[
0 \to P_{X_i} \xrightarrow{P_{f_i}} P_{Y_i} \xrightarrow{P_{g_i}} P_{Z_i} \to M_i \to 0
\]
is exact for each $i=1,2$. By the horseshoe lemma, we obtain an exact commutative diagram in $\Mod\EE$ of the following form
\[
\xymatrix{
 & 0\ar[d] & 0\ar[d] & 0\ar[d] & 0\ar[d] & \\
0 \ar[r] & P_{X_1} \ar[r]^{P_{f_1}}\ar[d] & P_{Y_1} \ar[r]^{P_{g_1}}\ar[d] & P_{Z_1} \ar[r]\ar[d] & M_1 \ar[r]\ar[d] & 0 \\
0 \ar[r] & P_{X_1\oplus X_2} \ar[r]^{P_f}\ar[d] & P_{Y_1 \oplus Y_2} \ar[r]^{P_g}\ar[d] & P_{Z_1 \oplus Z_2} \ar[r]\ar[d] & M \ar[r]\ar[d]^{k} & 0 \\
0 \ar[r] & P_{X_2}\ar[d] \ar[r]^{P_{f_2}} & P_{Y_2} \ar[d]\ar[r]^{P_{g_2}} & P_{Z_2} \ar[d]\ar[r] & M_2 \ar[d]\ar[r] & 0 \\
 & 0 & 0 & 0 & 0 &
}
\]
where the columns except the right-most one are split exact.
Since $P_{(-)} : \EE \to \Mod\EE$ is fully faithful by the Yoneda Lemma, we obtain the following commutative diagram
\[
\xymatrix{
X_1 \inflr^{f_1} \infld & Y_1 \deflr^{g_1} \infld & Z_1 \infld \\
X_1 \oplus X_2 \ar[r]^{f}\defld & Y_1 \oplus Y_2 \ar[r]^{g} \defld & Z_1 \oplus Z_2 \defld \\
X_2 \inflr^{f_2} & Y_2 \deflr^{g_2} & Z_2
}
\]
in $\EE$. The top and bottom rows are $F$-exact, each of the three columns is split exact and $g f = 0$.
Thus we can apply $3 \times 3$ lemma in exact categories, see \cite[Corollary 3.6]{buhler}. Thus the middle row is also $F$-exact, which implies that $M$ is in $\DD$.

Next suppose that $M$ is in $\DD$. We first see that $M_1$ is in $\DD$. We have an $F$-exact sequence $X\xrightarrow{f} Y\xrightarrow{g} Z$ such that $0 \to P_X \xrightarrow{P_f} P_Y \xrightarrow{P_g} P_Z \xrightarrow{h} M \to 0$ is exact.
Since $M_1$ is in $\mod\EE$, we have a surjection $P_W \to M_1 \to 0$ for some $W$ in $\EE$. This yields the following commutative diagram
\[
\xymatrix{
& & & P_W \ar[r] \ar[d]^{P_\varphi}& M_1 \ar[r] \infld & 0 \\
0 \ar[r]& P_X \ar[r]^{P_f}& P_Y \ar[r]^{P_g}& P_Z \ar[r]^{h}& M \ar[r]& 0
}
\]
Since $F$ is an exact structure on $\EE$, there exists a pullback diagram
\begin{equation}\label{pb}
\xymatrix{
X \inflr^{a} \ar@{=}[d] & E \deflr^{b} \ar[d]^{c} & W \ar[d]^{\varphi} \\
X \inflr^{f} & Y \deflr^{g} & Z
}
\end{equation}
in $\EE$ where the top row is $F$-exact.
By using the universal property of pullbacks, one can easily check that
\[
0 \to P_X \xrightarrow{P_a} P_E \xrightarrow{P_b} P_W \to M_1 \to 0
\]
is exact. Thus $M_1$ is in $\DD$.

On the other hand, it is well-known that the complex $E \xrightarrow{^t [b, -c]} W\oplus Y \xrightarrow{[\varphi, g]} Z$ corresponding to the pullback square in (\ref{pb}) is $F$-exact (see e.g. \cite[Proposition 2.12]{buhler}). Moreover, one can easily check that
\[
P_W \oplus P_Y \xrightarrow{[P_{\varphi}, P_g]} P_Z \xrightarrow{k h} M_2 \to 0
\]
is exact, where $k h$ is the composition $h:P_Z \defl M$ and $k:M \defl M_2$. Therefore $N$ is in $\DD$, which completes the proof that $\DD$ is a Serre subcategory of $\mod\EE$.

Note that an object $M$ in $\mod\EE^{\op}$ is contained in $\Ext^2_\EE(\DD,\EE)$ if and only if there exists an $F$-exact sequence $X\xrightarrow{f} Y\xrightarrow{g} Z$ in $\EE$ such that $0 \to P^Z \xrightarrow{P^g} P^Y \xrightarrow{P^f} P^X \to M \to 0$ is exact. Hence $\Ext^2_\EE(\DD,\EE)$ is a Serre subcategory of $\mod\EE^{\op}$ by the dual argument.

(2) $\Rightarrow$ (1):
By duality, it suffices to show that $F$ satisfies (Ex0)-(Ex2).
Note that (Ex0) automatically holds by the conditions (1)(a), (c) and (d) in Proposition \ref{bij}.

(Ex1)
Let $A \xrightarrow{f} B \xrightarrow{g} C$ and $X \xrightarrow{h} C \xrightarrow{k} D$ be $F$-exact sequences. We show that $k g$ is also an $F$-epimorphism. We have the following commutative diagram with exact rows
\[
\xymatrix{
 & & & 0\ar[d] & & \\
 & & &P_X \ar@{=}[r]\ar[d]^{P_h} & P_X\ar[d] & \\
0 \ar[r]& P_A \ar[r]^{P_f}& P_B \ar@{=}[d] \ar[r]^{P_g}& P_C\ar[d]^{P_k} \ar[r]& L\ar[d] \ar[r]& 0\\
 & & P_B \ar[r]^{P_{kg}}& P_D\ar[d] \ar[r]& M\ar[d] \ar[r]& 0\\
 & & &N \ar@{=}[r]\ar[d] & N\ar[d] & \\
 & & &0 & 0 &
}
\]
where $L$ and $N$ are in $\DD$ by the definition of $\DD$. The right-most column is exact by diagram chasing. Since $\DD$ is a Serre subcategory of $\mod\EE$, we immediately have that $M$ is in $\DD$.
In particular, $M$ is contained in $\CC_2(\EE)$. By Lemma \ref{Schanuel}, there exists a kernel-cokernel pair $Y \xrightarrow{l} B \xrightarrow{kg} D$ such that $0 \to P_Y \xrightarrow{P_l} P_B \xrightarrow{P_{k g}} P_D \to M \to 0$ is exact. Thus $k g$ is an $F(\DD)$-epimorphism. Since $F(\DD)=F$, the morphism $k g$ is an $F$-epimorphism.

(Ex2)
Let $X \xrightarrow{f} Y \xrightarrow{g} Z$ be an $F$-exact sequence and $h:W \to Z$ an arbitrary morphism in $\EE$. Then we have the commutative diagram
\[
\xymatrix{
& & & & 0\ar[d] & \\
& & & P_W \ar[r] \ar[d]^{P_h} & L \ar[r] \ar[d] & 0 \\
0 \ar[r]& P_X \ar[r]^{P_f}& P_Y \ar[r]^{P_g} & P_Z \ar[r]^{a}& M \ar[r]\ar[d]^{b}& 0\\
& & & & N\ar[d] & \\
& & & & 0 &
}
\]
where all the rows and columns are exact.
Since $M$ is in $\DD$ by the definition, $L$ and $N$ are also in $\DD$. In particular, $N$ is contained in $\CC_2(\EE)$.
On the other hand, we have that $P_W \oplus P_Y \xrightarrow{[P_h, P_g]} P_Z \xrightarrow{b a} N \to 0$ is exact. Thus by Lemma \ref{Schanuel}, there exists an exact sequence
\[
\xymatrix{
0 \ar[r]& P_E
\ar[r]^(.4){\begin{sbmatrix} P_k \\-P_l\end{sbmatrix}}
 & P_W \oplus P_Y \ar[r]^(.6){[ P_h, P_g]} & P_Z \ar[r]& N \ar[r]^{b a}& 0
 }
\]
in $\Mod\EE$. It is standard that this exact sequence yields a pullback diagram
\begin{equation}\label{pb2}
\xymatrix{
E \ar[d]^{l}\ar[r]^{k} & W \ar[d]^{h} \\
Y \ar[r]^{g} & Z
}
\end{equation}
in $\EE$. Thus the existence of the pullback has been proved.
By the universal property of the pullback (\ref{pb2}), there exists a complex $X \xrightarrow{i} E \xrightarrow{k} W$ in $\EE$ such that
\[
0 \to P_X \xrightarrow{P_i} P_E \xrightarrow{P_k} P_W \to L \to 0
\]
is exact. Thus the complex $X \xrightarrow{i} E \xrightarrow{k} W$ belongs to $F(\DD) = F$, which implies that $k$ is an $F$-epimorphism as desired.
\end{proof}

\begin{proof}[Proof of Theorem \ref{main}]
It is immediate from Proposition \ref{keyprop} and Lemma \ref{ok}.
\end{proof}

The following description of projective objects in $\EE$ in terms of $\DD$ is needed later.
\begin{proposition}\label{proj}
Let $\EE$ be an idempotent complete exact category and $\DD$ the corresponding Serre subcategory of $\mod\EE$ given in Theorem \ref{main}. Then for an object $Z$ in $\EE$, the following are equivalent.
\begin{enumerate}
\item $W$ is projective in $\EE$.
\item $M(W)=0$ for all object $M \in \DD$.
\end{enumerate}
\end{proposition}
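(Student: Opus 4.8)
The plan is to reduce the statement to the exactness of the evaluation functor $(-)(W)\colon \Mod\EE \to \Ab$ combined with the identification $\DD = \DD(F)$ supplied by Theorem \ref{main}, which presents $\DD$ as precisely the class of all defects $\coker(P_g)$ of $F$-deflations $g$. Recall the standard characterization: $W$ is projective in the exact category $(\EE,F)$ exactly when $\EE(W,-)$ sends every deflation to a surjection of abelian groups (equivalently, when every deflation $Y \defl W$ splits); see \cite{buhler}. So the task is to translate "surjective on $\EE(W,-)$" into "the defect vanishes at $W$".

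For the forward implication, assume $W$ is projective and take any $M \in \DD = \DD(F)$. By Definition \ref{maps}(2) and Proposition \ref{aiu}(2) there is an $F$-exact complex $X \xrightarrow{f} Y \xrightarrow{g} Z$ with $M \iso \coker(P_g)$, hence an exact sequence
\[
0 \to P_X \xrightarrow{P_f} P_Y \xrightarrow{P_g} P_Z \to M \to 0
\]
in $\Mod\EE$. Applying the exact functor $(-)(W)$ gives the exact sequence
\[
0 \to \EE(W,X) \to \EE(W,Y) \xrightarrow{\EE(W,g)} \EE(W,Z) \to M(W) \to 0 .
\]
Since $g$ is a deflation and $W$ is projective, $\EE(W,g)$ is surjective, so $M(W) = 0$.

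For the converse, assume $M(W) = 0$ for all $M \in \DD$, and let $g\colon Y \to Z$ be an arbitrary deflation in $(\EE,F)$. Choosing a kernel of $g$ yields a conflation $X \xrightarrow{f} Y \xrightarrow{g} Z$ whose defect $M := \coker(P_g)$ lies in $\DD(F) = \DD$. Applying $(-)(W)$ to the associated four-term exact sequence exactly as above identifies $\coker(\EE(W,g))$ with $M(W)$, which is $0$ by hypothesis; thus $\EE(W,g)$ is surjective. As $g$ ranged over all deflations, $W$ is projective.

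The argument is essentially formal once the pieces are in place; the only points that need to be invoked (rather than reproved) are the exactness of evaluation at $W$ on $\Mod\EE$, the fact that every deflation fits into a conflation, and the equality $\DD = \DD(F)$ from Theorem \ref{main} together with the description of $\DD(F)$ as the class of all contravariant defects. None of these presents a genuine obstacle, so I do not anticipate any hard step.
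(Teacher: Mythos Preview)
Your proof is correct and follows essentially the same route as the paper's: both identify $\DD$ with the class of defects $\coker(P_g)$ of deflations $g$, evaluate at $W$, and use that $W$ is projective precisely when $\EE(W,g)$ is surjective for every deflation. The paper's version is terser (it uses only the two-term presentation $P_Y \to P_Z \to M \to 0$ rather than the full four-term resolution), but the argument is the same.
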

\begin{proof}
The category $\DD$ consists of all functors $M$ such that $P_Y \xrightarrow{P_g} P_Z \to M \to 0$ is exact for some deflation $g:Y \defl Z$, so (2) is equivalent to that $P_Y(W) \xrightarrow{P_g(W)} P_Z(W)$ is surjective for every deflation $g$. This occurs if and only if $W$ is a projective object in $\EE$.
\end{proof}

Next let us consider the particular case when the simpler classification is available. This includes the case when $\EE$ is abelian, or more generally, quasi-abelian (see \cite{rump} for the detail).

\begin{lemma}\label{tokubetsu}
Let $\EE$ be an idempotent complete category. Then the following are equivalent.
\begin{enumerate}
\item $\CC_2(\EE)$ and $\CC_2(\EE^{\op})$ are Serre subcategories of $\mod \EE$ and $\mod \EE^{\op}$ respectively.
\item The class of all kernel-cokernel pairs in $\EE$ defines an exact structure of $\EE$.
\end{enumerate}
In this case, there exists a bijection between the following two classes.
\begin{enumerate}
\item Exact structures on $\EE$.
\item Serre subcategories of $\CC_2(\EE)$.
\end{enumerate}
\end{lemma}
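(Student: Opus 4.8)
The plan is to deduce both halves of the lemma from Theorem~\ref{main} and Proposition~\ref{keyprop}, applied to the single largest candidate class: write $F_{\max}$ for the class of \emph{all} kernel-cokernel pairs in $\EE$. Condition~(2) of the lemma is exactly the assertion that $F_{\max}$ is an exact structure, so the equivalence $(1)\Leftrightarrow(2)$ should come from running the criterion of Proposition~\ref{keyprop} for this particular $F$; and the final bijection should come from checking that, in the situation of~(1), the two conditions in Theorem~\ref{main}(2) collapse to the single requirement that $\DD$ be a Serre subcategory of $\CC_2(\EE)$.

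For $(1)\Leftrightarrow(2)$, the first step is to record, using Lemma~\ref{kctokuchou} and Proposition~\ref{aiu}, the identities $\DD(F_{\max})=\CC_2(\EE)$ and $F(\CC_2(\EE))=F_{\max}$: a complex $X\xrightarrow{f}Y\xrightarrow{g}Z$ lies in $F(\CC_2(\EE))$ iff $f$ is a kernel of $g$ and $\coker(P_g)\in\CC_2(\EE)$, and by Lemma~\ref{kctokuchou} this happens iff it is a kernel-cokernel pair. Since $\CC_2(\EE)$ is nonempty, closed under finite direct sums, and closed under direct summands by Lemma~\ref{summandclosed}, it lies in class~(2) of Proposition~\ref{bij}; hence $F_{\max}=F(\CC_2(\EE))$ lies in class~(1) of Proposition~\ref{bij}. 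Now Proposition~\ref{keyprop}, applied to $F_{\max}$ (here we use that $\EE$ is idempotent complete), says that $F_{\max}$ is an exact structure on $\EE$ if and only if $\DD(F_{\max})=\CC_2(\EE)$ is a Serre subcategory of $\mod\EE$ and $\Ext^2_\EE(\CC_2(\EE),\EE)$ is a Serre subcategory of $\mod\EE^{\op}$; and by the duality of Lemma~\ref{e2} the latter category is precisely $\CC_2(\EE^{\op})$. This is exactly condition~(1).

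For the final bijection, assume~(1). Then $\CC_2(\EE)$, being a Serre subcategory of the abelian category $\mod\EE$, is itself an abelian category with exact inclusion $\CC_2(\EE)\hookrightarrow\mod\EE$ (kernels, cokernels and images are computed in $\mod\EE$ and stay in $\CC_2(\EE)$), and likewise for $\CC_2(\EE^{\op})\hookrightarrow\mod\EE^{\op}$. By Theorem~\ref{main} it suffices to show that a subcategory $\DD\subseteq\CC_2(\EE)$ satisfies conditions (a) and (b) of Theorem~\ref{main}(2) if and only if $\DD$ is a Serre subcategory of $\CC_2(\EE)$. Here the key observation is that, because $\CC_2(\EE)$ is closed under subobjects, quotients and extensions in $\mod\EE$, a short exact sequence of $\EE$-modules one of whose terms lies in $\CC_2(\EE)$ has all three terms in $\CC_2(\EE)$; consequently, for $\DD\subseteq\CC_2(\EE)$, being a Serre subcategory of $\mod\EE$ is the same as being a Serre subcategory of $\CC_2(\EE)$, so condition~(a) is already equivalent to ``$\DD$ is Serre in $\CC_2(\EE)$''. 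It then remains only to see that (a) implies (b): if $\DD$ is Serre in $\CC_2(\EE)$, then since $\Ext^2_\EE(-,\EE)\colon\CC_2(\EE)\to\CC_2(\EE^{\op})$ is a contravariant exact equivalence by Lemma~\ref{e2}, it carries $\DD$ to a Serre subcategory $\Ext^2_\EE(\DD,\EE)$ of $\CC_2(\EE^{\op})$, which by the same observation applied to $\EE^{\op}$ (condition~(1) is symmetric in $\EE$ and $\EE^{\op}$) is a Serre subcategory of $\mod\EE^{\op}$. Thus the two classes in Theorem~\ref{main}(2) restrict to the Serre subcategories of $\CC_2(\EE)$, and the maps $F\mapsto\DD(F)$, $\DD\mapsto F(\DD)$ give the asserted bijection.

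The substance of the argument is concentrated in the first part; once one has $\DD(F_{\max})=\CC_2(\EE)$, $F(\CC_2(\EE))=F_{\max}$ and $\Ext^2_\EE(\CC_2(\EE),\EE)=\CC_2(\EE^{\op})$, Proposition~\ref{keyprop} does all the work. The point I expect to require the most care is the bookkeeping around the $\Ext^2$-duality: one must be sure that the duality really sends Serre subcategories to Serre subcategories and that no auxiliary finiteness or extension-closure hypothesis is being used implicitly, but since the defining ``$M$ is in the subcategory iff both ends are'' condition is unchanged when a short exact sequence is reversed, this should go through without trouble.
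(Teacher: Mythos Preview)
Your approach is correct and is exactly the paper's (very terse) argument spelled out in full detail: identify $\DD(F_{\max})=\CC_2(\EE)$, invoke Proposition~\ref{keyprop} together with $\Ext^2_\EE(\CC_2(\EE),\EE)=\CC_2(\EE^{\op})$ from Lemma~\ref{e2} for the equivalence, and then read off the bijection from Theorem~\ref{main}.

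One small correction: your ``key observation'' that a short exact sequence in $\mod\EE$ with \emph{one} term in $\CC_2(\EE)$ has all three terms there is false as stated (consider $0\to M\to M\oplus N\to N\to 0$ with $M\in\CC_2(\EE)$, $N\notin\CC_2(\EE)$). What you actually use, and what the Serre property of $\CC_2(\EE)$ in $\mod\EE$ gives you, is only that if the \emph{middle} term is in $\CC_2(\EE)$ then so are the outer two, and conversely; this weaker statement is precisely what is needed to show that for $\DD\subseteq\CC_2(\EE)$, ``Serre in $\mod\EE$'' and ``Serre in $\CC_2(\EE)$'' coincide, so your conclusion stands.
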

\begin{proof}
Let $F$ be the class of all kernel-cokernel pairs in $\EE$. Then $\DD(F) = \CC_2(\EE)$ holds, thus Proposition \ref{keyprop} applies. The latter part is clear from Theorem \ref{main}.
\end{proof}

Now we will show that a more familiar description for $\DD$ is available if $\EE$ has enough projectives. First we recall the notation of the projectively stable category of an exact category. Let $\EE$ be an exact category with enough projective objects.
Denote by $[\PP](X,Y)$ the set of all morphisms from $X$ to $Y$ which factor through projective objects in $\EE$. Then $[\PP]$ is a two-sided ideal of $\EE$ and we denote by $\un{\EE} := \EE/[\PP]$, which we call the \emph{projectively stable category}.

\begin{lemma}\label{defect}
Let $\EE$ be an exact category with enough projectives and $\DD$ the subcategory of $\mod\EE$ corresponding to all conflations (see Definition \ref{maps}). Then $\DD \equi \mod_1 \un{\EE}$ holds.
\end{lemma}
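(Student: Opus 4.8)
The plan is to describe the defect category $\DD$ concretely using the fact that $\EE$ has enough projectives, and then identify it with $\mod_1 \un{\EE}$ by a direct comparison of the two functor categories. First I would recall what an object of $\DD$ looks like: by Definition \ref{maps} and Lemma \ref{kctokuchou}, $\DD$ consists of all $M \iso \coker(P_g)$ where $X \xrightarrow{f} Y \xrightarrow{g} Z$ is a conflation. Since $\EE$ has enough projectives, every object $Z$ admits a deflation $P \defl Z$ from a projective $P$, and by (Ex2) one can pull back along this; more usefully, given an arbitrary conflation one can compare its defect with that of a conflation whose deflation hits $Z$ from a projective. The key observation to extract is: a functor $M \in \mod\EE$ lies in $\DD$ if and only if $M$ vanishes on all projective objects of $\EE$. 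One direction is Proposition \ref{proj} applied objectwise; for the converse, if $M(P)=0$ for every projective $P$, pick a presentation $P_Y \to P_Z \to M \to 0$ in $\mod\EE$ and, using that $\EE$ has enough projectives, modify $Y$ and $Z$ so that the map $Y \to Z$ becomes a deflation — concretely, replace $Z$ by taking a deflation $Q \defl Z$ from a projective and lift, using that $M$ kills $Q$; then the resulting three-term complex is a conflation computing $M$.

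Next I would set up the candidate equivalence. The projection $\pi : \EE \to \un{\EE}$ induces a functor $\pi^* : \Mod\un{\EE} \to \Mod\EE$ by restriction along $\pi$; its essential image is exactly the $\EE$-modules that annihilate $[\PP]$, equivalently (since morphisms factoring through projectives are detected by evaluation) the modules vanishing on all projective objects. So $\pi^*$ restricts to an equivalence between $\Mod\un{\EE}$ and the subcategory of $\Mod\EE$ of functors killing projectives; a quasi-inverse simply evaluates an $\EE$-module there is a matching $\un{\EE}$-module because such a functor factors through $\pi$. It remains to match the finiteness conditions: I must show that under this equivalence, the finitely presented $\un{\EE}$-modules $\mod_1\un{\EE}$ correspond precisely to $\DD$. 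Given $\ov{M} \in \mod_1\un{\EE}$, choose a presentation $\un{\EE}(-,Y) \to \un{\EE}(-,Z) \to \ov M \to 0$; lifting the map $Y \to Z$ to $\EE$ and completing it to a conflation (here again enough projectives is used, to adjust $Y$ by a projective summand so the lift is an inflation-cokernel setup — i.e. a deflation after possibly enlarging), one sees $\pi^*\ov M \in \DD$. Conversely an object of $\DD$ comes from a conflation $X \to Y \xrightarrow{g} Z$, and applying $\pi$ to $P_Y \xrightarrow{P_g} P_Z \to M \to 0$ gives a finite presentation of the corresponding $\un{\EE}$-module, since $\pi^*$ sends $\un{\EE}(-,Z)$ to $P_Z$.

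The main obstacle I anticipate is the bookkeeping around "enough projectives": turning an abstract presentation $P_Y \to P_Z \to M \to 0$ in the functor category into one that genuinely comes from a \emph{conflation} in $\EE$, rather than just a complex. The point is that a map $Y \to Z$ in $\EE$ inducing a surjection $P_Y \to P_Z$ on representables need not be a deflation, but when $M = \coker(P_g)$ vanishes on projectives one can correct this by composing with a deflation from a projective and using that the defect is unchanged up to the equivalence $\pi^*$ — equivalently, one works modulo $[\PP]$ from the start. I would handle this by first proving the "vanishing on projectives" characterization of $\DD$ cleanly, since that is really the crux, and then the identification $\DD \equi \mod_1\un{\EE}$ becomes a formal consequence of the restriction-of-scalars equivalence along $\pi$ together with the observation that representables on $\un{\EE}$ pull back to representables on $\EE$. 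The rest is routine diagram chasing that I would not spell out in full.
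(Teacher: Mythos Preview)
Your Step~3 is essentially the paper's proof: given $\ov M \in \mod_1\un{\EE}$ with a presentation $\un{\EE}(-,Y)\to\un{\EE}(-,Z)\to\ov M\to 0$, lift the map to $\EE$ and enlarge $Y$ by a deflation $P\defl Z$ from a projective to make $[g,\varphi]:Y\oplus P\to Z$ a genuine deflation (this is exactly the paper's implication (3)$\Rightarrow$(2)); conversely, for a conflation $X\infl Y\xrightarrow{g}Z$, the sequence $\un{\EE}(-,Y)\to\un{\EE}(-,Z)\to\coker(P_g)\to 0$ is exact because any morphism to $Z$ factoring through a projective lifts through the deflation $g$ (the paper's (1)$\Leftrightarrow$(2), done by diagram chasing). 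So the core is correct and matches the paper.

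Two caveats about the framing. First, your claim that ``representables on $\un{\EE}$ pull back to representables on $\EE$'' is false: $\pi^*\un{\EE}(-,Z)$ is the functor $W\mapsto\un{\EE}(W,Z)$, a proper quotient of $P_Z$ whenever some nonzero map from a projective hits $Z$. This does not break Step~3, since what you actually use is that $\coker(P_g)=\pi^*\coker(\un{\EE}(-,g))$ when $g$ is a deflation; that holds precisely because $[\PP](-,Z)\subset\im(P_g)$, as just noted.

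Second, your ``vanishing on projectives'' characterization of $\DD$ inside $\mod\EE$ is not established by your sketch: you write ``pick a presentation $P_Y\to P_Z\to M\to 0$,'' but membership in $\mod\EE$ only guarantees finite generation, not finite presentation. Your alternative route via the Serre property also stalls, since the kernel of the surjection $M'\defl M$ (with $M'\in\DD$) need not lie in $\mod\EE$, so there is no conflation in $\mod\EE$ to which the Serre condition applies. Fortunately your Step~3 never uses this characterization, and neither does the paper; I would simply drop it and present Step~3 directly.
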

\begin{proof}
We have an embedding $\mod_1 \un{\EE} \to \mod \EE$ and denote its essential image by $\DD'$. We show that the following conditions are equivalent for an object $M$ in $\mod\EE$.
\begin{enumerate}
\item $M \in \DD$.
\item There exist a deflation $g: Y \to Z$ in $\EE$ and an exact sequence
\begin{equation}\label{ei}
\un{\EE}(-,Y) \xrightarrow{(-)\circ g} \un{\EE}(-,Z) \to M \to 0.
\end{equation}
\item $M \in \DD'$.
\end{enumerate}

(1) $\Leftrightarrow$ (2):
This is easily shown by diagram chasing.

(2) $\Rightarrow$ (3):
Clear.

(3) $\Rightarrow$ (2):
Suppose that $M$ is in $\DD'$. Since $M$ is finitely presented $\un{\EE}$-module, we have an exact sequence of the form (\ref{ei}) for some $g: Y \to Z$. By assumption, there exists a deflation $\varphi: P \defl Z$ for some projective object $P$. Using this, we may replace $g$ by $[g,\varphi]:Y \oplus P \to Z$, which is a deflation. This proves the claim.
\end{proof}

Combining Lemma \ref{tokubetsu} and \ref{defect}, we immediately obtain the following conclusion, which gives a generalization of Buan's result \cite[Proposition 3.3.2]{bu}, where $\EE$ was assumed to be $\mod\Lambda$ for some artin algebra $\Lambda$.
\begin{corollary}
Let $\EE$ be an idempotent complete additive category such that the class of all kernel-cokernel pairs defines an exact structure with enough projectives (e.g. abelian category with enough projectives). Denote by $\un{\EE}$ the projectively stable category in this exact structure. Then there exists a bijection between the following two classes.
\begin{enumerate}
\item Exact structures on $\EE$.
\item Serre subcategories of $\mod_1 \un{\EE}$.
\end{enumerate}
\end{corollary}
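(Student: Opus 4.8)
The plan is to derive the corollary purely by composing two bijections already established in this section. The hypothesis says precisely that the class $F_{\max}$ of all kernel--cokernel pairs in $\EE$ is an exact structure; by Lemma~\ref{tokubetsu} this is equivalent to $\CC_2(\EE)$ and $\CC_2(\EE^{\op})$ being Serre subcategories of $\mod\EE$ and $\mod\EE^{\op}$, so the last assertion of Lemma~\ref{tokubetsu} applies and yields a bijection between exact structures on $\EE$ and Serre subcategories of $\CC_2(\EE)$.

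The second step is to replace $\CC_2(\EE)$ by $\mod_1\un{\EE}$. For the exact category $(\EE,F_{\max})$, the subcategory of $\mod\EE$ corresponding to its conflations in the sense of Definition~\ref{maps} is $\DD(F_{\max}) = \CC_2(\EE)$ --- this identity is exactly what is used in the proof of Lemma~\ref{tokubetsu}. Since $(\EE,F_{\max})$ has enough projectives by assumption, Lemma~\ref{defect} applies and produces an equivalence of categories $\CC_2(\EE) \equi \mod_1\un{\EE}$, where $\un{\EE}$ is the projectively stable category taken with respect to $F_{\max}$.

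Finally I would transport Serre subcategories along this equivalence. Because $\CC_2(\EE)$ is a Serre subcategory of the abelian category $\mod\EE$, it is itself abelian, hence so is $\mod_1\un{\EE}$; and any equivalence of abelian categories restricts to a bijection between their Serre subcategories, since the defining closure conditions (subobjects, quotients, extensions) are preserved by equivalences. Composing the two bijections then yields the desired bijection between exact structures on $\EE$ and Serre subcategories of $\mod_1\un{\EE}$, with an exact structure $F$ corresponding to the image of $\DD(F)$ under the equivalence. The argument is essentially bookkeeping: the only points needing care are checking that the hypotheses of Lemmas~\ref{tokubetsu} and~\ref{defect} are met and agreeing that ``Serre subcategory of $\mod_1\un{\EE}$'' is understood via the abelian structure carried over from $\CC_2(\EE)$; I do not expect any genuine obstacle.
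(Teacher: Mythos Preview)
Your argument is correct and matches the paper's one-line proof, which simply says the corollary follows by combining Lemmas~\ref{tokubetsu} and~\ref{defect}. One minor slip: $\mod\EE$ is not abelian in general, but this is harmless since the proof of Lemma~\ref{defect} actually identifies $\CC_2(\EE)$ and the image of $\mod_1\un{\EE}$ as literally the same full subcategory of $\Mod\EE$, so their Serre subcategories (in the exact-category sense used throughout) agree tautologically without any appeal to abelianness.
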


\section{Exact categories of finite type}
We use our previous results to classify idempotent complete exact categories of finite type.
For an additive category $\EE$, an object $M$ is called an \emph{additive generator} of $\EE$ if $\add M = \EE$ holds, where $\add M$ is the subcategory of $\EE$ consisting of all direct summands of finite direct sums of $M$. We call that an additive category $\EE$ is \emph{of finite type} if it has an additive generator.

We say that an additive category $\EE$ is a \emph{Krull-Schmidt category} if every object in $\EE$ is a finite direct sum of indecomposable objects whose endomorphism rings are local. For the basics of Krull-Schmidt categories, we refer the reader to \cite{krause}. An additive category $\EE$ is Krull-Schmidt if and only if $\EE$ is idempotent complete and $\End_\EE(X)$ is semiperfect for every $X \in \EE$.
If $\EE$ is Krull-Schmidt, then $\EE$ is of finite type precisely when $\EE$ has finitely many indecomposables up to isomorphism.

Let $M$ be an additive generator of $\EE$ and $\Gamma := \End_\EE(M)$. Then we have a fully faithful functor $\EE(M,-): \EE \to \Mod \Gamma$ to the category of right $\Gamma$-modules. Its essential image coincides with the category $\proj\Gamma$ of finitely generated projective $\Gamma$-modules precisely when $\EE$ is idempotent complete. Thus, when we deal with an idempotent complete additive category of finite type, we may assume that $\EE = \proj\Gamma$ for some ring $\Gamma$. To avoid technical complications, \emph{we restrict our attention to the case when $\Gamma$ is a noetherian ring}. Note that $\proj \Gamma$ is Krull-Schmidt if and only if $\Gamma$ is semiperfect.

\subsection{Basic properties}
We start with reformulating our result in Section 3 in terms of the ring $\Gamma$, where we use the same notation $\CC_2(\Gamma)$:
\[
\CC_2(\Gamma) := \{ M \in \mod\Gamma \text{ $|$ } \pd M_\Gamma \leq 2 \text{ and } \Ext_\Gamma^i(M,\Gamma) = 0 \text{ for } i =0,1 \}.
\]
Note that every non-zero module $M$ in $\CC_2(\Gamma)$ satisfies $\pd M_\Gamma = 2$.

\begin{theorem}\label{finitecorresp}
Let $\Gamma$ be a noetherian ring and put $\EE := \proj\Gamma$. Then there exists a bijection between the following two classes.
\begin{enumerate}
\item Exact structures $F$ on $\EE$.
\item Subcategories $\DD$ of $\CC_2(\Gamma)$ satisfying the following condition.
 \begin{enumerate}
  \item $\DD$ is a Serre subcategory of $\mod\Gamma$.
  \item $\Ext^2_\Gamma(\DD,\Gamma)$ is a Serre subcategory of $\mod\Gamma^{\op}$.
 \end{enumerate}
\end{enumerate}
The correspondence is given as follows.
 \begin{enumerate}
  \item[$\bullet$] For a given $\DD$, a complex $X \xrightarrow{f} Y \xrightarrow{g} Z$ in $\EE$ is in $F$ if and only if $0 \to X \xrightarrow{f} Y \xrightarrow{g} Z \to M \to 0$ is an exact sequence in $\mod\Gamma$ with some $M$ in $\DD$.
  \item[$\bullet$] For a given $F$, a $\Gamma$-module $M \in \mod\Gamma$ is in $\DD$ if and only if there exists a deflation $g$ in $\EE$ with $M \iso \coker g$.
 \end{enumerate}
\end{theorem}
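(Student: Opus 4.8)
The plan is to obtain Theorem \ref{finitecorresp} as the translation of Theorem \ref{main} through the standard equivalence between the functor category $\Mod(\proj\Gamma)$ and $\Mod\Gamma$. Since finitely generated projective modules are closed under direct summands, $\EE=\proj\Gamma$ is idempotent complete, so Theorem \ref{main} applies verbatim and already provides the bijection; what remains is only to rewrite its right-hand side, together with the two maps of Definition \ref{maps}, in terms of $\Gamma$-modules.

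First I would introduce the evaluation functor $\Phi\colon\Mod(\proj\Gamma)\to\Mod\Gamma$, $M\mapsto M(\Gamma)$, with quasi-inverse $N\mapsto\Hom_\Gamma(-,N)|_{\proj\Gamma}$: this is the classical equivalence of module categories over $\add$ of a generator, using $\End_\Gamma(\Gamma)\iso\Gamma$. Under $\Phi$ the representable $P_X=\proj\Gamma(-,X)$ goes to $X$ regarded as a finitely generated projective $\Gamma$-module, so the Yoneda embedding $\proj\Gamma\hookrightarrow\Mod(\proj\Gamma)$ becomes the inclusion $\proj\Gamma\hookrightarrow\Mod\Gamma$. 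Restricting, $\Phi$ gives $\mod(\proj\Gamma)\simeq\mod\Gamma$ (a finitely generated $\EE$-module is a cokernel of a map of finitely generated projective $\Gamma$-modules); here the noetherianness of $\Gamma$ is what makes $\mod\Gamma$ abelian, so that ``Serre subcategory'' carries its usual meaning. Applying the same construction to $\EE^{\op}=\proj(\Gamma^{\op})$ identifies $\mod\EE^{\op}$ with $\mod\Gamma^{\op}$.

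Next I would check that $\Phi$ intertwines the two ``$\Hom$-into-the-base'' functors: $\Hom_\EE(P_{(-)},\EE)\simeq P^{(-)}$ corresponds, via $\Phi$ and its opposite, to $\Hom_\Gamma(-,\Gamma)$ on finitely generated projectives, and since both $\Hom_\EE(-,\EE)$ and $\Hom_\Gamma(-,\Gamma)$ are left exact and computed from projective resolutions, $\Phi\bigl(\Ext^i_\EE(M,\EE)\bigr)\iso\Ext^i_\Gamma(\Phi M,\Gamma)$. In particular $\Phi$ restricts to an equivalence $\CC_2(\EE)\simeq\CC_2(\Gamma)$, matching the defining conditions ($\pd\leq 2$ together with the vanishing of $\Ext^0$ and $\Ext^1$ into the base ring) term by term, so the two Serre conditions of Theorem \ref{main} become conditions (a) and (b) of the present statement. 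Finally, unwinding Definition \ref{maps}: an exact sequence $0\to P_X\xrightarrow{P_f}P_Y\xrightarrow{P_g}P_Z\to M\to 0$ in $\Mod\EE$ is sent by $\Phi$ to $0\to X\xrightarrow{f}Y\xrightarrow{g}Z\to\Phi M\to 0$ in $\mod\Gamma$, which is precisely the stated description of $F$ from $\DD$; dually, $M\iso\coker(P_g)$ becomes $\Phi M\iso\coker g$, giving the description of $\DD$ from $F$.

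The only point requiring genuine care is the compatibility of the homological algebra on the two sides --- that $\Phi$ commutes with $\Hom(-,\,\text{base ring})$, hence with the $\Ext$'s and with projective dimension, and that $\mod(\proj\Gamma)$ really coincides with $\mod\Gamma$. Because $\Phi$ is an equivalence carrying finitely generated projectives to finitely generated projectives, these verifications are routine, so I do not expect a real obstacle; the only vigilance needed is the left/right-module bookkeeping for the $\EE^{\op}$ side.
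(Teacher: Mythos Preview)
Your proposal is correct and matches the paper's own approach: the paper simply states Theorem \ref{finitecorresp} as the reformulation of Theorem \ref{main} for $\EE=\proj\Gamma$ without giving a separate proof, and your argument supplies exactly the routine translation via the equivalence $\Mod(\proj\Gamma)\simeq\Mod\Gamma$ that this reformulation tacitly invokes. The only small slip is describing $\mod\EE$ as ``cokernels of maps of finitely generated projectives'' (that is finite presentation rather than finite generation), but since $\Gamma$ is noetherian these coincide and the argument goes through unchanged.
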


When we deal with exact structures on $\proj\Gamma$, the $2$-regular condition for simple modules (see Definition \ref{2regdef}) are quite essential.
For the set $\SS$ of simple $\Gamma$-modules, we denote by $\Filt\SS$ the subcategory of $\mod\Gamma$ consisting of all modules $M$ such that $M$ has finite length and all composition factors of $M$ are contained in $\SS$. The following observation is useful.

\begin{lemma}\label{simplemma}
Let $\Gamma$ be a noetherian ring and $\SS$ a set of simple $\Gamma$-modules. Then the following are equivalent.
\begin{enumerate}
\item Every module in $\SS$ satisfies the $2$-regular condition.
\item $\DD := \Filt \SS$ is contained in $\CC_2(\Gamma)$ and satisfies the conditions of Theorem \ref{finitecorresp}(2).
\item There exists a subcategory $\DD$ of $\CC_2(\Gamma)$ containing $\SS$ such that $\DD$ satisfies the conditions of Theorem \ref{finitecorresp}(2).
\end{enumerate}
\end{lemma}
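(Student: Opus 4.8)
\textbf{Proof plan for Lemma \ref{simplemma}.}

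The plan is to prove the cycle of implications $(1) \Rightarrow (2) \Rightarrow (3) \Rightarrow (1)$, using Theorem \ref{finitecorresp} to translate between exact structures and Serre subcategories. The implication $(2) \Rightarrow (3)$ is trivial, taking $\DD = \Filt\SS$ and noting that $\SS \subseteq \Filt\SS$.

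For $(3) \Rightarrow (1)$: suppose $\DD \subseteq \CC_2(\Gamma)$ contains $\SS$ and satisfies the conditions of Theorem \ref{finitecorresp}(2). Fix a simple module $S \in \SS \subseteq \DD$. Since $S \in \CC_2(\Gamma)$, by the definition of $\CC_2(\Gamma)$ we immediately get $\pd S_\Gamma = 2$ (it is nonzero) and $\Ext^i_\Gamma(S,\Gamma) = 0$ for $i = 0,1$; this is conditions (1) and (2) of Definition \ref{2regdef}. For condition (3), I would use Lemma \ref{e2} (or rather its version over $\Gamma$): the functor $\Ext^2_\Gamma(-,\Gamma)$ restricts to a duality $\CC_2(\Gamma) \to \CC_2(\Gamma^{\op})$, so $\Ext^2_\Gamma(S,\Gamma)$ is an indecomposable object of $\CC_2(\Gamma^{\op})$ with trivial endomorphism ring (as $S$ is simple and the duality is fully faithful on $\CC_2$). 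Moreover $\Ext^2_\Gamma(\DD,\Gamma)$ is a Serre subcategory of $\mod\Gamma^{\op}$ by hypothesis, and it contains $\Ext^2_\Gamma(S,\Gamma)$. The key point is that a nonzero object $T$ of a Serre subcategory of $\mod\Gamma^{\op}$ that has no proper nonzero subobject \emph{inside that Serre subcategory} need not be simple in $\mod\Gamma^{\op}$; so I must argue directly that $\Ext^2_\Gamma(S,\Gamma)$ is simple. Here I would invoke the fact that any subobject $T' \hookrightarrow T := \Ext^2_\Gamma(S,\Gamma)$ in $\mod\Gamma^{\op}$ is again finitely generated, hence lies in $\CC_2(\Gamma^{\op})$ once we know the Serre subcategory $\Ext^2_\Gamma(\DD,\Gamma)$ is closed under subobjects in $\mod\Gamma^{\op}$; applying the duality $\Ext^2_{\Gamma^{\op}}(-,\Gamma)$ backwards sends the quotient $T \twoheadrightarrow T''$ to a subobject of $S$ in $\mod\Gamma$, which must be $0$ or $S$, forcing $T' = 0$ or $T' = T$. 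This shows $T$ is simple.

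For $(1) \Rightarrow (2)$: assume every $S \in \SS$ satisfies the $2$-regular condition and set $\DD := \Filt\SS$. I would first check $\DD \subseteq \CC_2(\Gamma)$. By condition (1) of the $2$-regular condition, $\pd S_\Gamma = 2$ for each $S \in \SS$, and since $\Filt\SS$ consists of finite iterated extensions of members of $\SS$, every $M \in \Filt\SS$ has $\pd M_\Gamma \le 2$; by condition (2), $\Ext^i_\Gamma(S,\Gamma) = 0$ for $i = 0,1$, and the long exact sequence in $\Ext^*_\Gamma(-,\Gamma)$ applied to extensions propagates this vanishing to all of $\Filt\SS$. Next, $\DD = \Filt\SS$ is clearly a Serre subcategory of $\mod\Gamma$: a short exact sequence $0 \to M_1 \to M \to M_2 \to 0$ has $M$ of finite length with composition factors in $\SS$ iff both $M_1$ and $M_2$ do. Finally I must show $\Ext^2_\Gamma(\DD,\Gamma)$ is a Serre subcategory of $\mod\Gamma^{\op}$. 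Using the duality of Lemma \ref{e2} again, $\Ext^2_\Gamma(-,\Gamma)$ restricted to $\CC_2(\Gamma)$ is exact (it is a duality of exact categories, hence sends the exact structure of $\CC_2(\Gamma) \subseteq \mod\Gamma$ — which is the one induced from $\mod\Gamma$, since $\CC_2$ is closed under extensions within $\Filt\SS$ — to that of $\CC_2(\Gamma^{\op})$), so $\Ext^2_\Gamma(\Filt\SS,\Gamma) = \Filt\{\Ext^2_\Gamma(S,\Gamma) : S \in \SS\}$; by condition (3) of the $2$-regular condition each $\Ext^2_\Gamma(S,\Gamma)$ is simple in $\mod\Gamma^{\op}$, so this is again of the form $\Filt\SS'$ for a set $\SS'$ of simple $\Gamma^{\op}$-modules, hence a Serre subcategory of $\mod\Gamma^{\op}$ by the same argument as before.

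\textbf{Main obstacle.} The delicate point is the direction $(3) \Rightarrow (1)$, specifically deducing that $\Ext^2_\Gamma(S,\Gamma)$ is \emph{simple} in $\mod\Gamma^{\op}$ — not merely simple within the Serre subcategory $\Ext^2_\Gamma(\DD,\Gamma)$. The resolution is that a Serre subcategory of $\mod\Gamma^{\op}$ is by definition closed under subobjects and quotients in the ambient category $\mod\Gamma^{\op}$, so every subobject of $\Ext^2_\Gamma(S,\Gamma)$ already lies in $\CC_2(\Gamma^{\op})$, and then the duality of Lemma \ref{e2} transports the submodule lattice faithfully. One should also be careful that $\CC_2(\Gamma)$ is closed under the extensions used (which follows from the $\Ext$-vanishing conditions and Lemma \ref{summandclosed}-type arguments), so that $\Filt\SS$ genuinely lands in $\CC_2(\Gamma)$ and the induced exact structure on it makes Lemma \ref{e2} applicable.
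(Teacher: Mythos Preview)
Your proposal is correct and follows essentially the same route as the paper: the cycle $(1)\Rightarrow(2)\Rightarrow(3)\Rightarrow(1)$, with $(1)\Rightarrow(2)$ via extension-closure of $\CC_2(\Gamma)$ and the identity $\Ext^2_\Gamma(\Filt\SS,\Gamma)=\Filt\Ext^2_\Gamma(\SS,\Gamma)$ coming from the exact duality of Lemma~\ref{e2}, and $(3)\Rightarrow(1)$ via the same duality restricted to the two Serre subcategories. The paper's argument for $(3)\Rightarrow(1)$ is phrased more tersely --- it just observes that $\DD$ and $\Ext^2_\Gamma(\DD,\Gamma)$ are abelian and anti-equivalent, so $\Ext^2_\Gamma(S,\Gamma)$ is simple in $\Ext^2_\Gamma(\DD,\Gamma)$, hence simple in $\mod\Gamma^{\op}$ --- whereas you spell out explicitly why simplicity inside a Serre subcategory forces simplicity in the ambient module category; this is exactly the point you flag as the ``main obstacle,'' and your resolution is the right one.
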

\begin{proof}
(1) $\Rightarrow$ (2):
Clearly $S \in \CC_2(\Gamma)$ holds for every $S \in \SS$. Since $\CC_2(\Gamma)$ is closed under extensions in $\mod\Gamma$, we have that $\Filt S$ is a Serre subcategory of $\mod\Gamma$ contained in  $\CC_2(\Gamma)$. Since $\Ext^2_\Gamma(-,\Gamma)$ gives an exact duality $\CC_2(\Gamma) \equi \CC_2(\Gamma ^{\op})$ and $\Ext^2_\Gamma(S,\Gamma)$ is simple for every $S \in \SS$, we have $\Ext^2_\Gamma(\DD,\Gamma) = \Filt\Ext^2_\Gamma(\SS,\Gamma)$. Thus this category is a Serre subcategory of $\mod\Gamma^{\op}$, which shows that $\Filt \SS$ satisfies the conditions of Theorem \ref{finitecorresp}(2).

(2) $\Rightarrow$ (3): Obvious.

(3) $\Rightarrow$ (1):
Let $S$ be a simple module contained in $\SS$. It follows from $S \in  \CC_2(\Gamma)$ that $\pd S_\Gamma = 2$ and $\Ext^i_\Gamma(S,\Gamma)=0$ for $i=0,1$. Notice that $\Ext^2_\Gamma(-,\Gamma)$ gives the duality between $\DD$ and $\Ext^2_\Gamma(\DD,\Gamma)$, both of which are abelian categories. Thus it is immediate that $\Ext^2_\Gamma(S,\Gamma)$ is a simple object in the abelian category $\Ext^2_\Gamma(\DD,\Gamma)$, which implies that $\Ext^2_\Gamma(S,\Gamma)$ is a simple left $\Gamma$-module. Therefore $S$ satisfies the $2$-regular condition.
\end{proof}

Every additive category admits a trivial exact structure whose conflations are split exact sequences. We have the following criterion on the existence of non-trivial exact structures on $\proj\Gamma$.
\begin{proposition}\label{criterion}
Let $\Gamma$ be a noetherian ring. Then $\proj\Gamma$ admits a non-trivial exact structure if and only if there exists a simple $\Gamma$-module satisfying the $2$-regular condition.
\end{proposition}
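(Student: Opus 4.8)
The plan is to deduce the proposition directly from the bijection of Theorem~\ref{finitecorresp} and from Lemma~\ref{simplemma}. The first thing I would establish is the translation of the word ``trivial'' across that bijection: the \emph{split} exact structure on $\EE := \proj\Gamma$ corresponds to the zero subcategory $\DD = 0$ of $\CC_2(\Gamma)$. For one direction, if $X \xrightarrow{f} Y \xrightarrow{g} Z$ is a split kernel--cokernel pair then $P_g$ is a split epimorphism, so $\coker(P_g) = 0$, whence the $\DD$ of the split structure is $0$; conversely, $F(0)$ consists of complexes with $0 \to P_X \to P_Y \to P_Z \to 0$ exact, and these split because $P_Z$ is projective, so $F(0)$ is the split structure. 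Since Theorem~\ref{finitecorresp} is a bijection, it restricts to a bijection between the \emph{non-trivial} exact structures on $\EE$ and the \emph{non-zero} subcategories $\DD \subseteq \CC_2(\Gamma)$ satisfying conditions (a) and (b) of Theorem~\ref{finitecorresp}(2). So it suffices to show that such a non-zero $\DD$ exists if and only if some simple $\Gamma$-module has the $2$-regular condition.

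For the ``if'' direction I would take a simple module $S$ with the $2$-regular condition and set $\DD := \Filt\{S\}$. By the implication (1)$\Rightarrow$(2) of Lemma~\ref{simplemma} (with $\SS = \{S\}$), this $\DD$ lies in $\CC_2(\Gamma)$ and satisfies the conditions of Theorem~\ref{finitecorresp}(2); as $S \in \DD$ and $S \neq 0$ it is non-zero, so the corresponding exact structure is non-trivial.

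For the ``only if'' direction, given a non-trivial exact structure $F$ with corresponding $\DD = \DD(F)$, the first paragraph gives $\DD \neq 0$, so I would pick a non-zero $M \in \DD \subseteq \mod\Gamma$. Since $\Gamma$ is noetherian, $M$ is noetherian, hence has a maximal submodule and therefore a simple quotient $S$; as $\DD$ is a Serre subcategory of $\mod\Gamma$ it is closed under quotients, so $S \in \DD$. Now $\DD$ is a subcategory of $\CC_2(\Gamma)$ containing the simple module $S$ and satisfying Theorem~\ref{finitecorresp}(2)(a),(b), so the implication (3)$\Rightarrow$(1) of Lemma~\ref{simplemma} (again with $\SS = \{S\}$) shows that $S$ satisfies the $2$-regular condition.

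I do not expect a real obstacle here: every step is a direct invocation of machinery already in place. The only points that need a word of care are the identification of the split structure with $\DD = 0$ on both sides of the bijection, and the elementary fact that a non-zero finitely generated module over a noetherian ring has a simple quotient; with those recorded, the equivalence is immediate from Lemma~\ref{simplemma}.
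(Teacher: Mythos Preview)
Your proposal is correct and follows essentially the same approach as the paper's proof: both directions are deduced from Theorem~\ref{finitecorresp} together with Lemma~\ref{simplemma}, using that a non-zero finitely generated module over a noetherian ring has a simple quotient. You are simply more explicit than the paper in verifying that the split exact structure corresponds to $\DD = 0$ under the bijection, which the paper takes for granted.
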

\begin{proof}
Suppose that there exists a simple $\Gamma$-module $S$ satisfying the $2$-regular condition. Then Lemma \ref{simplemma} and Theorem \ref{finitecorresp} imply the existence of a non-trivial exact structure on $\proj\Gamma$.

Conversely, suppose that $\proj\Gamma$ has a non-trivial exact structure. By Theorem \ref{finitecorresp}, we have a non-zero Serre subcategory $\DD$ of $\mod\Gamma$. Since any non-zero $\Gamma$-module in $\mod\Gamma$ has a surjection onto simple $\Gamma$-module, $\DD$ contains at least one simple $\Gamma$-module $S$. Then Lemma \ref{simplemma} implies that $S$ satisfies the $2$-regular condition.
\end{proof}

\begin{example}
Let $R$ be a commutative noetherian local ring. Then Proposition \ref{criterion} implies that there exists a non-trivial exact structure on $\proj R$ if and only if $R$ is a regular local ring of dimension $2$. In this case, $\proj R$ has exactly one non-trivial exact structures. In this exact structure, $P_2 \xrightarrow{f} P_1 \xrightarrow{g} P_0$ is a conflation if and only if $f$ is a kernel of $g$ and $\coker(g)$ is of finite length over $R$.
\end{example}

\subsection{Admissible exact structures}
We introduce a nice class of exact categories which is completely controlled by the simple modules satisfying the $2$-regular condition. For a ring $\Gamma$, we denote by $\fl \Gamma$ the subcategory of $\mod\Gamma$ consisting of $\Gamma$-modules of finite length. Similarly, we denote by $\fl \EE$ the category consisting of $\EE$-modules of finite length.
\begin{definition}\label{admdef}
Let $(\EE, F)$ be an exact category and $\DD$ the subcategory of $\mod\EE$ corresponding to $F$ under Theorem \ref{main}. We say that $F$ is admissible if $\DD \subset \fl \EE$ holds.
\end{definition}
Let $(\proj \Gamma, F)$ be an exact category for a noetherian ring $\Gamma$ and $\DD$ the subcategory of $\mod\Gamma$ corresponding to $F$ under Theorem \ref{finitecorresp}. Then it is clear that $F$ is admissible if and only if $\DD \subset \fl \Gamma$ holds. Therefore, for an artinian ring $\Gamma$, every exact structure on $\proj\Gamma$ is admissible.

We show that the admissibility is left-right symmetric under some assumptions.
\begin{proposition}
Let $(\proj \Gamma, F)$ be an exact category for a noetherian ring $\Gamma$. Then it is admissible if and only if $(\proj \Gamma^{\op},F^{\op})$ is admissible.
\end{proposition}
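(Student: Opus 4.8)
The plan is to read off admissibility of both $F$ and $F^{\op}$ from the Serre subcategories of Theorem~\ref{finitecorresp} and then transport between them via the duality $\Ext^2_\Gamma(-,\Gamma)$ of Lemma~\ref{e2}. Set $\EE:=\proj\Gamma$, so that $\Hom_\Gamma(-,\Gamma)$ identifies $\EE^{\op}$ with $\proj\Gamma^{\op}$ and $\mod\EE^{\op}$ with $\mod\Gamma^{\op}$. Let $\DD:=\DD(F)\subseteq\CC_2(\Gamma)$ be the Serre subcategory of $\mod\Gamma$ corresponding to $F$, so that $F$ is admissible exactly when $\DD\subseteq\fl\Gamma$ (cf.\ the remark after Definition~\ref{admdef}).

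First I would identify the Serre subcategory of $\mod\Gamma^{\op}$ attached to $F^{\op}$. By definition the conflations of $F^{\op}$ are the reversed conflations $Z\to Y\to X$ of the conflations $X\xrightarrow{f}Y\xrightarrow{g}Z$ of $F$ (under $\Hom_\Gamma(-,\Gamma)$). If $0\to P_X\to P_Y\to P_Z\to M\to 0$ is the corresponding projective resolution in $\mod\EE$, then applying $\Hom_\EE(-,\EE)$ and using $\Ext^i_\EE(M,\EE)=0$ for $i=0,1$ (Lemma~\ref{kctokuchou}) yields an exact sequence $0\to P^Z\to P^Y\to P^X\to\Ext^2_\EE(M,\EE)\to 0$; hence the defect of the reversed conflation is $\Ext^2_\EE(M,\EE)=\Ext^2_\Gamma(M,\Gamma)$. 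Therefore $\DD(F^{\op})=\Ext^2_\Gamma(\DD,\Gamma)=:\DD'$, which is precisely the Serre subcategory of $\mod\Gamma^{\op}$ that Theorem~\ref{finitecorresp} (applied to the ring $\Gamma^{\op}$) assigns to $F^{\op}$ — the same observation already used in the proof of Proposition~\ref{keyprop}. Consequently $F^{\op}$ is admissible exactly when $\DD'\subseteq\fl\Gamma^{\op}$.

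Next I would invoke two routine facts. (i) Since $\DD$ is a Serre subcategory of the abelian category $\mod\Gamma$, its simple objects are exactly the simple $\Gamma$-modules lying in $\DD$, and any composition series of an object $M\in\DD$ taken in $\mod\Gamma$ has all of its factors in $\DD$; hence for objects of $\DD$, ``finite length as a $\Gamma$-module'' and ``finite length in the abelian category $\DD$'' coincide, and likewise for $\DD'$. (ii) By Lemma~\ref{e2}, $\Ext^2_\Gamma(-,\Gamma)$ restricts to an exact anti-equivalence $\DD\xrightarrow{\ \sim\ }\DD'$ of abelian categories; an exact anti-equivalence carries simple objects to simple objects and turns a composition series of length $n$ into one of the same length (read backwards), so it carries the full subcategory of finite-length objects of $\DD$ onto that of $\DD'$.

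Putting these together: $\DD\subseteq\fl\Gamma$ means every object of $\DD$ has finite length, which by (i) is a statement inside the abelian category $\DD$; by (ii), together with the fact that the duality is essentially surjective onto $\DD'$, this holds if and only if every object of $\DD'$ has finite length, i.e.\ $\DD'\subseteq\fl\Gamma^{\op}$. Combined with the first step this is exactly the asserted equivalence ``$F$ admissible $\iff$ $F^{\op}$ admissible''. The only step that is not pure bookkeeping is the identification $\DD(F^{\op})=\Ext^2_\Gamma(\DD,\Gamma)$; once that is in place, everything reduces to the self-duality of the finite-length condition under the exact anti-equivalence of Lemma~\ref{e2}.
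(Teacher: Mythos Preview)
Your proof is correct and takes essentially the same approach as the paper: identify $\DD(F^{\op})=\Ext^2_\Gamma(\DD,\Gamma)$, use the exact duality of Lemma~\ref{e2} to transport the finite-length property between $\DD$ and $\DD'$, and use that a Serre subcategory has the same simples as the ambient category so that ``finite length in $\DD$'' coincides with ``finite length in $\mod\Gamma$''. The paper's version only writes out one direction and leaves the converse to symmetry, whereas you phrase it as a single biconditional, but the content is the same.
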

\begin{proof}
Let $\DD$ be the subcategory of $\mod\Gamma$ corresponding to $F$ under Theorem \ref{finitecorresp}. Then $\Ext^2_\Gamma(\DD,\Gamma)$ is the subcategory corresponding to $F^{\op}$. Since $F$ is admissible, every object in $\DD$ has finite length as a $\Gamma$-module. Thus $\DD$ is an abelian category in which every object has finite length. Because $\Ext^2_\Gamma(\DD,\Gamma)$ is dual to $\DD$, every object in $\Ext^2_\EE(\DD,\EE)$ has finite length in the abelian category $\Ext^2_\Gamma(\DD,\Gamma)$. Since $\Ext^2_\Gamma(\DD,\Gamma)$ is a Serre subcategory of $\mod \Gamma^{\op}$, we have $\Ext^2_\Gamma(\DD,\Gamma) \subset \fl \Gamma^{\op}$.
\end{proof}

Using this notion, we can classify all admissible exact structures on $\proj\Gamma$.
\begin{theorem}\label{2regularcorresp}
Let $\Gamma$ be a noetherian ring. For $\EE := \proj\Gamma$, there exists a bijection between the following two classes.
\begin{enumerate}
\item Admissible exact structures $F$ on $\EE$.
\item Sets $\SS$ of isomorphism classes of simple $\Gamma$-modules satisfying the $2$-regular condition.
\end{enumerate}
The correspondence is given as follows.
 \begin{enumerate}[label=$\bullet$]
  \item For a given $\SS$, a complex $X \xrightarrow{f} Y \xrightarrow{g} Z$ in $\EE$ is a conflation if and only if there exists an exact sequence $0 \to X \xrightarrow{f} Y \xrightarrow{g} Z \to M \to 0$ in $\mod\Gamma$ with $M$ in $\Filt\SS$.
  \item For a given $F$, a simple $\Gamma$-module $S$ is in $\SS$ if and only if there exists a deflation $g:Y \to Z$ in $\EE$ such that $\coker(g) \iso S$ in $\mod\Gamma$.
 \end{enumerate}
\end{theorem}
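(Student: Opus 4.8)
The plan is to deduce Theorem~\ref{2regularcorresp} from the general classification Theorem~\ref{finitecorresp} together with the characterization of simple $2$-regular modules in Lemma~\ref{simplemma}. By Theorem~\ref{finitecorresp}, admissible exact structures $F$ on $\EE = \proj\Gamma$ correspond bijectively to subcategories $\DD \subset \CC_2(\Gamma)$ which are Serre subcategories of $\mod\Gamma$ with $\Ext^2_\Gamma(\DD,\Gamma)$ a Serre subcategory of $\mod\Gamma^{\op}$, \emph{and} which additionally satisfy $\DD \subset \fl\Gamma$ (this last is the translation of admissibility, as remarked after Definition~\ref{admdef}). So it suffices to establish a bijection between such subcategories $\DD$ and sets $\SS$ of simple $\Gamma$-modules satisfying the $2$-regular condition, compatible with the stated correspondences.

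First I would define the two maps. Given $\SS$, set $\DD := \Filt\SS$; by Lemma~\ref{simplemma}(1)$\Rightarrow$(2) this lies in $\CC_2(\Gamma)$ and satisfies the Serre conditions of Theorem~\ref{finitecorresp}(2), and since every module in $\Filt\SS$ has finite length by definition, $\DD \subset \fl\Gamma$, so $\DD$ is admissible. Conversely, given an admissible $\DD$, let $\SS := \{\text{simple } \Gamma\text{-modules in } \DD\}$; since $\DD \subset \fl\Gamma$ is a nonzero Serre subcategory (or zero, giving $\SS = \varnothing$), and every nonzero finite-length module surjects onto a simple one lying in $\DD$ by the Serre property, $\SS$ generates $\DD$ as a Serre subcategory, i.e. $\DD = \Filt\SS$. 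Moreover each $S \in \SS$ satisfies the $2$-regular condition by Lemma~\ref{simplemma}(3)$\Rightarrow$(1) applied to $\DD$ itself. Thus $\SS \mapsto \Filt\SS$ and $\DD \mapsto \SS$ are mutually inverse: $\SS \mapsto \Filt\SS \mapsto \{\text{simples in }\Filt\SS\} = \SS$ is clear, and $\DD \mapsto \SS \mapsto \Filt\SS = \DD$ was just argued.

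The only genuinely non-formal point is the equality $\DD = \Filt\SS$ for an admissible $\DD$, and in particular that every module in $\DD$ actually decomposes along composition factors all of which lie in $\DD$; this uses precisely that $\DD$ is a Serre subcategory of $\mod\Gamma$ closed under subobjects and quotients, combined with $\DD \subset \fl\Gamma$ so that composition series are finite. I expect this to be the main (though still routine) obstacle — one argues by induction on length: a nonzero $M \in \DD$ has a simple quotient $S$, which lies in $\DD$ by the Serre property, hence $S \in \SS$; the kernel $M' \in \DD$ has smaller length, so by induction $M' \in \Filt\SS$, and then $M \in \Filt\SS$ since $\Filt\SS$ is extension-closed. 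The reverse inclusion $\Filt\SS \subset \DD$ is the extension-closedness of the Serre subcategory $\DD$.

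Finally, the two bulleted descriptions of the correspondence are inherited directly from the corresponding bullets in Theorem~\ref{finitecorresp}: substituting $\DD = \Filt\SS$ into ``$X \to Y \to Z \to M \to 0$ exact with $M \in \DD$'' gives the first bullet verbatim, and the second bullet says $S \in \SS$ iff $S = \coker(g)$ for a deflation $g$, which follows since $\SS$ consists of the simple objects of $\DD = \DD(F)$ and $\DD(F)$ is exactly the class of cokernels of deflations (Definition~\ref{maps} and the remark after it). This completes the proof; no new ideas beyond Theorem~\ref{finitecorresp} and Lemma~\ref{simplemma} are needed, the argument being a clean specialization.
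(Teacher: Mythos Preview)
Your proof is correct and follows essentially the same approach as the paper: reduce via Theorem~\ref{finitecorresp} (with the admissibility constraint $\DD \subset \fl\Gamma$) to a bijection between such $\DD$'s and sets $\SS$ of $2$-regular simples, then use Lemma~\ref{simplemma} for well-definedness and a routine length induction for the equality $\DD = \Filt\SS$. The paper's proof is terser, simply asserting that the maps are mutually inverse ``by definition,'' whereas you spell out the induction, but there is no substantive difference.
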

\begin{proof}
By Theorem \ref{finitecorresp}, there exists a bijection between (1) and
\begin{enumerate}
\item[(3)] Subcategories $\DD$ of $\CC_2(\Gamma)$ satisfying the following condition.
 \begin{enumerate}
  \item $\DD$ is a Serre subcategory of $\mod\Gamma$ satisfying $\DD \subset \fl \Gamma$.
  \item $\Ext^2_\Gamma(\DD,\Gamma)$ is a Serre subcategory of $\mod\Gamma^{\op}$.
 \end{enumerate}
\end{enumerate}

We have mutually inverse bijections between (2) and (3) as follows;
For a given $\SS$ in (2), we put $\DD :=\Filt \SS$, and
for a given $\DD$ in (3), we denote by $\SS$ the set of simple modules contained in $\DD$.
By Lemma \ref{simplemma}, these maps are well-defined. These are mutually inverse to each other by definition.
\end{proof}

Next we will focus on the case over the ground ring $R$. \emph{In the rest of this subsection, we fix a commutative noetherian complete local ring $R$}.
For an additive $R$-category $\EE$ for a commutative noetherian ring $R$, we say that $\EE$ is \emph{Hom-noetherian} (resp. \emph{Hom-finite}) if the $R$-module $\EE(X,Y)$ is finitely generated (resp. of finite length) for every $X, Y \in \EE$. Every Hom-noetherian idempotent complete $R$-category is Krull-Schmidt.
For an exact $R$-category $\EE$, we say that $\EE$ is \emph{Ext-noetherian} (resp. \emph{Ext-finite}) if for every object $X$ and $Y$, the $R$-module $\Ext^1_\EE(X,Y)$ is finitely generated (resp. of finite length).

The following figure illustrates the relationship between these concepts.
We will prove these implication in Proposition \ref{admissible} and Corollary \ref{projinj}. We refer the reader to Appendix A.2 for the results and notions we need in the proof.
\begin{figure}[h]
\begin{tikzpicture}
 \node (0) at (-3,0)  {$R$ is artinian};
 \node [text width=2cm, text centered] (1) at (0,0)  {enough proj. (or inj.)};
 \node (2) at (3,0)  {Ext-noeth.};
 \node (3) at (6,0) {Ext-finite};
 \node (4) at (9,0) {admissible};

 \draw [-implies,double equal sign distance] (0) -- (1) ;
 \draw [-implies,double equal sign distance] (1) -- (2) ;
 \draw [-implies,double equal sign distance] (2) to [bend left=10] node[auto] {\footnotesize finite type} (3) ;
 \draw [-implies,double equal sign distance] (3) to [bend left=10]  (2) ;
 \draw [-implies,double equal sign distance] (3) to [bend left=10] node[auto] {\footnotesize finite type} (4) ;
 \draw [-implies,double equal sign distance] (4) to [bend left=10] node[auto] {\footnotesize enough inj.} (3) ;

\end{tikzpicture}
\caption{Some implications on admissibility}
\label{admfigure}
\end{figure}

\begin{proposition}\label{admissible}
Let $\EE$ be a Hom-noetherian idempotent complete exact $R$-category. Then the following hold.
\begin{enumerate}
\item If $\EE$ has either enough projectives or enough injectives, then $\EE$ is Ext-noetherian.
\item If $\EE$ is Ext-noetherian and $\EE$ is of finite type, then $\EE$ is Ext-finite.
\item If $\EE$ is Ext-finite and $\EE$ is of finite type, then $\EE$ is admissible. Conversely, if $\EE$ is admissible and has enough injectives, then $\EE$ is Ext-finite.
\end{enumerate}
\end{proposition}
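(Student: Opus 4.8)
The plan is to treat the four implications in turn: (1) and the converse half of (3) are short exact‑sequence computations, the forward half of (3) is a reformulation through the defect functor, and (2) is the substantive one, where the Auslander--Reiten theory of Appendix~A.2 is the indispensable input.

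For (1), given $X,Y\in\EE$ and assuming $\EE$ has enough projectives, I would choose a conflation $\Omega X\infl P\defl X$ with $P$ projective in $\EE$ (its kernel is an object of $\EE$ by the exact‑category axioms), apply $\EE(-,Y)$, and use $\Ext^1_\EE(P,Y)=0$ to obtain $\Ext^1_\EE(X,Y)\iso\coker(\EE(P,Y)\to\EE(\Omega X,Y))$, a quotient of the finitely generated $R$‑module $\EE(\Omega X,Y)$, hence finitely generated since $R$ is noetherian. The enough‑injectives case is dual, using a conflation $Y\infl I\defl\Sigma Y$ with $I$ injective in $\EE$ and the functor $\EE(X,-)$.

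For (3), to prove ``Ext‑finite and of finite type $\Rightarrow$ admissible'' I would write $\EE=\proj\Gamma$ with $\Gamma:=\End_\EE(M)$ for an additive generator $M$; $\Gamma$ is module‑finite over $R$ by Hom‑noetherianness, $\mod\EE=\mod\Gamma$, and $\DD\subseteq\CC_2(\Gamma)$ denotes the Serre subcategory corresponding to $F$ (Theorem~\ref{finitecorresp}). Each $N\in\DD$ is the defect of a conflation $X\infl Y\defl Z$, and the connecting homomorphisms in the sequences $\EE(W,Y)\to\EE(W,Z)\to\Ext^1_\EE(W,X)$ realize $N$ as a subfunctor of $\Ext^1_\EE(-,X)$; evaluating at $M$ identifies the underlying $\Gamma$‑module of $N$ with a submodule of $\Ext^1_\EE(M,X)$, which has finite length over $R$ by Ext‑finiteness, so $N\in\fl\Gamma$ and hence $\DD\subseteq\fl\Gamma$. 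For the converse, ``admissible and enough injectives $\Rightarrow$ Ext‑finite'': a conflation $Y\infl I\defl\Sigma Y$ with $I$ injective in $\EE$ gives, exactly as in (1), $\Ext^1_\EE(-,Y)\iso\coker(P_I\to P_{\Sigma Y})$, which is the defect of that conflation and so lies in $\DD\subseteq\fl\EE$ by admissibility; since $\EE$ is Krull--Schmidt, a finite‑length $\EE$‑module has composition factors of the form $P_{X_0}/\rad P_{X_0}$, whose value at any $X$ is $\EE(X,X_0)/\rad(X,X_0)$ --- annihilated by $\mm$ and finitely generated over $R$, hence of finite length --- so $\Ext^1_\EE(X,Y)$ has finite length over $R$ for every $X$.

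For (2), keep $M$ and $\Gamma$ as above; by additivity of $\Ext^1_\EE$ it suffices to prove that $\Ext^1_\EE(M,M)$ has finite length over $R$, and the idea is to compute its Matlis dual $D(-):=\Hom_R(-,E)$, with $E$ the injective hull of $R/\mm$. The key fact I would import from Appendix~A.2 is that a Hom‑noetherian exact $R$‑category of finite type has AR conflations and satisfies an AR duality, so that for each indecomposable $Z$ not projective in $\EE$ the functor $D\Ext^1_\EE(Z,-)$ is naturally a quotient of $\EE(-,\tau Z)$ (and $\Ext^1_\EE(Z,-)=0$ when $Z$ is projective in $\EE$). Decomposing $M$ into indecomposables, $D\Ext^1_\EE(M,M)$ is then a finite direct sum of quotients of the $R$‑modules $\EE(M_i,\tau M_j)$, hence finitely generated over $R$ since $\EE$ is Hom‑noetherian. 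On the other hand $\Ext^1_\EE(M,M)$ is finitely generated over $R$ by the Ext‑noetherian hypothesis, so, $R$ being complete, it is Matlis reflexive and equals $D$ of the finitely generated module $D\Ext^1_\EE(M,M)$, hence is Artinian; a finitely generated Artinian module over a noetherian ring has finite length, which finishes (2). The hard part is precisely this imported AR duality: finite type by itself does \emph{not} force Ext‑finiteness --- for instance the nontrivial exact structure on $\proj R$, with $R$ a two‑dimensional regular local ring, is a finite‑type exact category having $\Ext^1_\EE(R,R)\iso E$, which is not noetherian --- so the Ext‑noetherian hypothesis is genuinely needed to upgrade ``finitely generated Matlis dual'' to ``finite length''. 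When quoting the AR duality I would also take care to fix the precise variance and the correct stable $\Hom$.
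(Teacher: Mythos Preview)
Your proof is correct and follows essentially the same approach as the paper. For part (2), you unpack the Matlis-duality argument (finitely generated with finitely generated Matlis dual over a complete local ring implies finite length) that the paper has already folded into the conclusion of Proposition~\ref{ardual}; the paper simply cites that proposition, whose proof contains the same non-degenerate pairing plus duality step. Your extra detail in the converse of (3), explaining why a finite-length $\EE$-module has finite-length values over $R$ via the shape of the simple $\EE$-modules, fills in the passage the paper leaves implicit, and your remark on $\proj R$ for a two-dimensional regular local ring is a nice sharpness observation not in the paper.
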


\begin{proof}
(1)
This is clear since extension groups can be computed by projective resolutions or injective coresolutions.

(2)
If $\EE$ is of finite type, then $\EE$ has AR conflations by Corollary \ref{exist}. Since $\EE$ is Ext-noetherian, Proposition \ref{ardual} implies that $\EE$ is Ext-finite.

(3)
Suppose that $\EE$ is Ext-finite and of finite type. Take $W$ to be an additive generator of $\EE$ and $\Gamma := \End_\EE W$. For any conflation $X \xrightarrow{f} Y \xrightarrow{g} Z$, the cokernel of $g \circ (-):\EE(W,Y) \to \EE(W,Z)$ is a submodule of $\Ext_\EE^1(W,X)$, thus is of finite length over $R$, or equivalently, over $\Gamma$. Therefore $\EE$ is admissible.

Conversely, suppose that the exact structure on $\EE$ is admissible and $\EE$ has enough injectives. For an object $X \in \EE$, take a conflation $X \xrightarrow{f} I \xrightarrow{g} Z$ such that $I$ is injective. Then $\EE(-,I) \xrightarrow{g \circ (-)} \EE(-,Z) \to \Ext^1_\EE(-,X) \to 0$ is exact, hence $\Ext^1_\EE(-,X)$ is of finite length. It follows that $\Ext^1_\EE(W,X)$ is of finite length over $R$ for any $W$ in $\EE$.
\end{proof}

Next we interpret Theorem \ref{2regularcorresp} in terms of the quiver of $\Gamma$. Recall that for a noetherian $R$-algebra $\Gamma$, the valued quiver $Q(\Gamma)$ is defined as follows, where $\JJ$ denotes the radical of $\proj\Gamma$.
\begin{enumerate}
\item The set of vertices is $\ind (\proj\Gamma)$, that is, the isomorphism classes of all indecomposable projective right $\Gamma$-modules.
\item We draw an arrow from $P$ to $Q$ if $\JJ(P,Q)/ \JJ^2(P,Q) \neq 0$ with a valuation $(d_{P,Q}, d'_{P,Q})$, where $d_{P,Q}$ (resp. $d'_{P,Q}$) is the dimension of $\JJ(P,Q)/ \JJ^2(P,Q)$ as a $k_P$-vector space (resp. $k_Q$-vector space). Here $k_P := \End_\EE(P)/ \rad \End_\EE(P)$ and $k_Q := \End_\EE(Q)/ \rad \End_\EE(Q)$.
\end{enumerate}

We introduce a \emph{translation} on this quiver $Q(\Gamma)$.
\begin{definition}\label{const}
Let $\Gamma$ be a noetherian $R$-algebra. The translation $\tau$ of $P \in Q(\Gamma)$ is defined when $P/\rad P$ satisfies the $2$-regular condition. In this case, $\tau P$ is the projective module $Q$ such that $\Hom_\Gamma(Q,\Gamma)$ is a projective cover of the simple left $\Gamma$-module $\Ext^2_\Gamma(P/\rad P,\Gamma)$.
We draw a \emph{dotted arrow} from $P$ to $\tau P$ whenever $\tau P$ is defined.
\end{definition}
This construction yields a valued translation quiver (see \cite{ars}). Since we will not use valued arrows, we omit the proof.

Now admissible exact structures on $\proj\Gamma$ can be visually classified by the dotted arrows.
\begin{corollary}\label{dotted}
Let $\Gamma$ be a noetherian $R$-algebra over a noetherian complete local ring $R$. Then there exists a bijection between the following two classes.
\begin{enumerate}
\item Admissible exact structures on $\proj \Gamma$.
\item Sets of dotted arrows in $Q(\Gamma)$.
\end{enumerate}
Moreover, the Auslander-Reiten quiver of the exact category $\EE$ is given by the quiver $Q(\Gamma)$ with the dotted arrows chosen in {\upshape (2)}.
\end{corollary}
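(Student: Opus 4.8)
The plan is to deduce the bijection from Theorem \ref{2regularcorresp}, and to identify $Q(\Gamma)$ with the Auslander--Reiten quiver directly via the defect formalism and the Ext-duality of Lemma \ref{e2}.

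First I would observe that a set of dotted arrows in $Q(\Gamma)$ is the same datum as a set of isomorphism classes of simple $\Gamma$-modules satisfying the $2$-regular condition. Indeed, by Definition \ref{const} there is at most one dotted arrow starting at each vertex $P \in \ind(\proj\Gamma)$, it exists precisely when $S_P := P/\rad P$ satisfies the $2$-regular condition, and its target $\tau P$ is uniquely determined by $P$; so a dotted arrow is nothing but its source $P$. Since $\proj\Gamma$ is Krull--Schmidt (being Hom-noetherian over the complete local ring $R$), the assignment $P \mapsto S_P$ is a bijection between $\ind(\proj\Gamma)$ and the isomorphism classes of simple $\Gamma$-modules, and restricting it identifies sets of dotted arrows in $Q(\Gamma)$ with sets $\SS$ of simple $\Gamma$-modules with the $2$-regular condition. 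Composing with the bijection of Theorem \ref{2regularcorresp} gives the asserted bijection between admissible exact structures on $\proj\Gamma$ and sets of dotted arrows.

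For the last statement, fix an admissible exact structure $F$ with associated set $\SS$ and Serre subcategory $\DD = \Filt\SS \subseteq \CC_2(\Gamma)$. The underlying valued quiver of $Q(\Gamma)$ is by definition the Gabriel quiver of $\proj\Gamma$, i.e. the quiver recording $\rad_{\proj\Gamma}(P,Q)/\rad^2_{\proj\Gamma}(P,Q)$; since irreducible morphisms are a purely additive notion, this agrees with the solid part of the Auslander--Reiten quiver of $(\proj\Gamma,F)$, independently of $F$. By Proposition \ref{proj}, an indecomposable $P$ is projective in $(\proj\Gamma,F)$ iff $M(P) = 0$ for all $M \in \DD$, i.e. iff $\Hom_\Gamma(P,S) = 0$ for all $S \in \SS$, i.e. iff $S_P \notin \SS$; so the non-projective vertices are exactly the sources of the chosen dotted arrows. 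It remains to check that for such a non-projective $P$ the AR translate $\tau_{\AR}P$, appearing in the AR conflation $\tau_{\AR}P \to E \to P$, equals $\tau P$ of Definition \ref{const}. Here I would use that an AR conflation is exactly a conflation whose contravariant defect $\coker(\EE(-,E)\to\EE(-,P))$ is a simple object of $\DD$; for the one ending at $P$ this defect is $S_P$, and applying $\Ext^2_\Gamma(-,\Gamma)$ (Lemma \ref{e2}, together with the identification of defects in Proposition \ref{aiu}) shows the covariant defect $\coker(\Hom_\Gamma(E,\Gamma)\to\Hom_\Gamma(\tau_{\AR}P,\Gamma))$ is $\Ext^2_\Gamma(S_P,\Gamma)$. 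Minimality of the AR conflation then forces $\Hom_\Gamma(\tau_{\AR}P,\Gamma)$ to be a projective cover of $\Ext^2_\Gamma(S_P,\Gamma)$, which by Definition \ref{const} means $\tau_{\AR}P \cong \tau P$, so the dotted arrows of the AR quiver are precisely the chosen ones.

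The main obstacle is this last step: pinning down the defect dictionary — that the AR conflations of $(\proj\Gamma,F)$ are exactly the conflations with simple contravariant defect in $\DD$, and that enough such conflations exist in the generality at hand — which is where the Auslander--Reiten machinery of the appendix (Corollary \ref{exist} and the associated duality statements) is used. Granting that, the equality $\tau_{\AR} = \tau$ is a formal consequence of Lemma \ref{e2} and the uniqueness of projective covers.
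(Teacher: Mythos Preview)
Your argument for the bijection is exactly the paper's proof: identify dotted arrows with simple $\Gamma$-modules satisfying the $2$-regular condition via Definition \ref{const}, then invoke Theorem \ref{2regularcorresp}. The paper's own proof stops there and says nothing further; in particular it does not justify the ``Moreover'' clause about the Auslander--Reiten quiver.

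Your second and third paragraphs therefore go beyond the paper, and the defect argument you give is correct and is the natural one. The point that the solid part of the AR quiver depends only on the additive structure is immediate; the identification of non-projectives with sources of chosen dotted arrows is Proposition \ref{proj} applied to $\DD=\Filt\SS$; and your computation $\tau_{\AR}P=\tau P$ via Lemma \ref{e2} is exactly right. The only thing worth making explicit is why the dualized resolution is minimal: since the AR conflation has both maps in the radical (Lemma \ref{kc} plus right/left minimality), applying the duality $\Hom_\Gamma(-,\Gamma):\proj\Gamma\to\proj\Gamma^{\op}$ keeps them in the radical, so $\Hom_\Gamma(\tau_{\AR}P,\Gamma)$ is indeed a projective cover of $\Ext^2_\Gamma(S_P,\Gamma)$. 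The existence of the needed AR conflations is Corollary \ref{exist}, as you note. So your proposal is correct, follows the paper for the bijection, and supplies a proof of the AR-quiver statement that the paper leaves implicit.
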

\begin{proof}
Note that each dotted arrow $\tau P \dashleftarrow P$ in $Q(\Gamma)$ bijectively corresponds to the simple $\Gamma$-module $P / \rad P$ satisfying the $2$-regular condition by the definition. Thus Theorem \ref{2regularcorresp} implies the assertion.
\end{proof}

In the situation of Corollary \ref{dotted}, let $\XXX$ be a set of dotted arrows in $Q(\Gamma)$ and $F$ be the corresponding exact structures on $\EE:=\proj\Gamma$. For the convenience of the reader, we give several relations between $\XXX$ and $F$.
\begin{itemize}
  \item $\XXX$ is empty if and only if $F$ is the split exact structure on $\EE$, that is, the smallest exact structure in which only split exact sequences are conflations. This is nothing but the ``usual" exact structure on $\proj\Gamma$, which is induced from the embedding $\proj\Gamma \subset \mod\Gamma$.
  \item $\XXX$ is the set of all dotted arrows if and only if $F$ is the unique maximal exact structures among admissible ones on $\EE$. In particular, if $R$ is artinian, then this holds precisely when $F$ is the unique maximal exact structure, because all the exact structures are admissible (see Figure \ref{admfigure}).
  \item Let $W$ be an indecomposable object in $\EE$. Then $W$ is a projective (resp. injective) object in $(\EE,F)$ if and only if $W$ is not the source (resp. not the target) of a dotted arrow in $\XXX$. This follows from the description of projective objects given in the proof of Proposition \ref{enough}(1) below.
\end{itemize}

We end this subsection by giving examples of Corollary \ref{dotted}.
\begin{example}
Let $\Gamma$ be the algebra defined in Example \ref{example1} the introduction. Then $Q(\Gamma)$ coincides with Figure \ref{cap1} (all valuations are trivial, i.e. $(1,1)$ and all dotted lines are interpreted as arrows from right to left). It has seven dotted arrows, thus $\proj \Gamma$ has $2^7 = 128$ exact structures.
Actually, this example is due to \cite[Example 5.7]{en}, and in that paper it is shown that the category $\proj\Gamma$ is a \emph{strict $\tau$-category}, introduced by Iyama (see e.g. \cite{tau4}). For such a class of categories, we have a nice correspondence between translation quivers and their mesh algebras: the quiver $Q(\Gamma)$ of the mesh algebra $\Gamma$ coincides with the original translation quiver.
\end{example}
The following two examples concern with the category $\CM\Lambda$ of Cohen-Macaulay $\Lambda$-modules over an order $\Lambda$. We refer the reader to Section 4 for the details on orders. The usual exact structure on the category $\CM\Lambda$ is admissible if $\CM\Lambda$ is of finite type, since $\CM\Lambda$ has enough projectives (Proposition \ref{admissible}). Therefore, the usual exact structure on $\CM\Lambda$ corresponds to the set of dotted arrows of the usual Auslander-Reiten quiver of $\CM\Lambda$.
\begin{example}
Let $k$ be a field, $R:=k\llbracket t\rrbracket$ the ring of formal power series over a field $k$ and
\[
\Gamma :=
\begin{bmatrix}
R & R & \cdots & R & R \\
(t) & R & \cdots & R & R \\
(t^2) & (t) & \ddots & R & R \\
\vdots & \ddots & \ddots & \ddots & \vdots \\
(t^n) & \cdots & (t^2) & (t) & R
\end{bmatrix}.
\]
Then $Q(\Gamma)$ is given by the following quiver.
\[
\begin{tikzpicture}[xscale=1.5]
\node (1) at (0,0) {$1$};
\node (2) at (1,0) {$2$};
\node (3) at (2,0) {$3$};
\node (4) at (3,0) {$\cdots$};
\node (5) at (4,0) {$n$};
\node (6) at (5,0) {$n+1$};

\draw[->] (1) to [bend left] (2);
\draw[->] (2) to [bend left](1);
\draw[->] (2) to [bend left](3);
\draw[->] (3) to [bend left](2);
\draw[->] (3) to [bend left](4);
\draw[->] (4) to [bend left](3);
\draw[->] (4) to [bend left](5);
\draw[->] (5) to [bend left](4);
\draw[->] (5) to [bend left](6);
\draw[->] (6) to [bend left](5);

\draw[thick, dotted,->] (2) to [loop above](2);
\draw[thick, dotted,->] (3) to [loop above](3);
\draw[thick, dotted,->] (5) to [loop above](5);

\end{tikzpicture}
\]
Note that $\Gamma$ is the Auslander order for an $R$-order
\[
\Lambda :=
\begin{bmatrix}
R & R\\
(t^n) & R
\end{bmatrix}
\]
hence there is an equivalence $\proj \Gamma \equi \CM \Lambda$, and the above translation quiver gives the Auslander-Reiten quiver of $\CM \Lambda$.
Corollary \ref{dotted} shows that $\proj\Gamma$ has $2^{n-1}$ admissible exact structures, and the usual exact structure on $\CM \Lambda$ corresponds to the set of all dotted arrows.
In general, for an Cohen-Macaulay-finite $R$-order $\Lambda$ over a complete discrete valuation ring $R$, the quiver $Q(\Gamma)$ of its Auslander order $\Gamma$ coincides with the usual Auslander-Reiten quiver of $\CM\Lambda$.
\end{example}

\begin{example}\label{veronese}
Let $T= k \llbracket x,y \rrbracket$ be the ring of formal power series in two variables over a field $k$ of characteristic $0$, and let $R = T^{(n)}$ be the $n$-th Veronese subring of $T$, that is, $R = k \llbracket x^n,x^{n-1}y, \cdots, y^n \rrbracket$.
 Put $\Gamma := \End_R(T)$. Then all simple $\Gamma$-modules satisfy the $2$-regular condition, and the translation quiver $Q(\Gamma)$ is the following.
\[
\begin{tikzpicture}[scale=0.5]
   \newdimen\R
   \R=2.7cm
   \node (1) at (60:\R)  {$1$};
   \node (2) at (0:\R)  {$2$};
   \node (3) at (-60:\R)  {$3$};
   \node (4) at (-120:\R)  {$4$};
   \node (5) at (180:\R)  {$5$};
   \node (n-1) at (120:\R)  {$n$};

\draw[->] (1) to [bend left=15](2);
\draw[->] (2) to [bend left=15](3);
\draw[->] (3) to [bend left=15](4);
\draw[->] (4) to [bend left=15](5);
\draw[->] (5) to [bend left=15](n-1);
\draw[->] (n-1) to [bend left=15](1);
\draw[->] (1) to [bend right=15](2);
\draw[->] (2) to [bend right=15](3);
\draw[->] (3) to [bend right=15](4);
\draw[->] (4) to [bend right=15](5);
\draw[->] (5) to [bend right=15](n-1);
\draw[->] (n-1) to [bend right=15](1);

\draw[dashed,->] (3) to (1);
\draw[dashed,->] (4) to (2);
\draw[dashed,->] (5) to (3);
\draw[dashed,->] (2) to (n-1);
\draw[dashed,->] (1) to (5);
\draw[dashed,->] (n-1) to (4);

\node [draw=white,fill=white,rotate=-30,inner ysep=1pt] at (150:\R*0.7) {\huge $\cdots\cdots$};
\end{tikzpicture}
\]
Here the double arrows are interpreted as the single arrows with valuation $(2,2)$. Note that $\add (T_R)= \CM R$ holds, thus $\proj\Gamma \equi \CM R$ (see \cite[Chapter 10]{yo} for the detail). Therefore Corollary \ref{dotted} implies that $\CM R$ has $2^n$ admissible exact structures.
The usual exact structure on $\CM R$ corresponds to the set of all but one arrows above, which is the usual Auslander-Reiten quiver of $\CM R$.
 On the other hand, the exact structure on $\proj \Gamma$ corresponding to all dotted arrows arises from the embedding $\proj\Gamma \to \mod\Gamma \defl \mod\Gamma / \fl \Gamma$, where $\mod\Gamma / \fl \Gamma$ is the localization of $\mod\Gamma$ by the Serre subcategory $\fl\Gamma$ (we omit the details here).
\end{example}

\subsection{Enough projectivity and admissibility}
We collect some properties about the relation between enough projectivity and admissibility of exact $R$-categories.
\emph{Throughout this subsection, we fix a commutative noetherian complete local ring $R$}.

By Proposition \ref{admissible}(3), If $\EE$ is a Hom-noetherian idempotent complete exact $R$-category of finite type which has enough projectives, then $\EE$ is admissible. The following gives a criterion when the converse holds.

\begin{proposition}\label{enough}
Let $\EE:=(\proj\Gamma,F)$ be an admissible exact category for a semiperfect noetherian ring $\Gamma$. Take an idempotent $e \in \Gamma$ such that $e \Gamma$ is an additive generator of projective objects in $\EE$. Then the following hold.
\begin{enumerate}
\item $\DD = \fl (\Gamma/\Gamma e \Gamma)$ holds, where $\DD$ is the subcategory of $\mod\Gamma$ corresponding to $F$ under Theorem \ref{finitecorresp}.
\item $\EE$ has enough projectives if and only if $\Gamma / \Gamma e \Gamma$ is in $\fl \Gamma$.
\end{enumerate}
\end{proposition}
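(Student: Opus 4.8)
The plan is to first reduce part (1) to an identification of the simple objects of $\DD$, and then deduce part (2) from (1) by a trace-ideal argument.

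\emph{Part (1).} Since the exact structure $F$ is admissible, $\DD$ is a Serre subcategory of $\mod\Gamma$ contained in $\fl\Gamma$; hence every object of $\DD$ has a composition series with factors in $\DD$, and $\DD$ is extension closed, so $\DD=\Filt\SS$, where $\SS$ denotes the set of simple $\Gamma$-modules lying in $\DD$. On the other hand $\mod(\Gamma/\Gamma e\Gamma)$ is a Serre subcategory of $\mod\Gamma$ (if $0\to N'\to N\to N''\to 0$ with $N'e=N''e=0$, then $Ne\subseteq N'$ and hence $Ne=Ne^2\subseteq N'e=0$), and a finite length module is annihilated by $e$ precisely when all its composition factors are, so $\fl(\Gamma/\Gamma e\Gamma)=\Filt\{\,S \text{ simple}\mid Se=0\,\}$. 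Thus it suffices to prove $\SS=\{\,S\mid Se=0\,\}$. Given a simple $\Gamma$-module $S$ with projective cover $P(S)$, Proposition \ref{proj} (transported to $\proj\Gamma$ along the equivalence $\Mod(\proj\Gamma)\equi\Mod\Gamma$) says that $P(S)$ is a projective object of $(\EE,F)$ if and only if $\Hom_\Gamma(P(S),M)=0$ for every $M\in\DD$; as $\Hom_\Gamma(P(S),-)$ is exact and $\DD=\Filt\SS$, this happens exactly when $\Hom_\Gamma(P(S),S')=0$ for all $S'\in\SS$, that is, when $S\notin\SS$. But by hypothesis the projective objects of $(\EE,F)$ are exactly $\add(e\Gamma)$, and (since $\Gamma$ is semiperfect) $P(S)$ is a summand of $e\Gamma$ if and only if $\Hom_\Gamma(e\Gamma,S)\iso Se$ is nonzero. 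Comparing the two criteria gives $S\notin\SS\iff Se\neq 0$, hence $\SS=\{\,S\mid Se=0\,\}$ and $\DD=\fl(\Gamma/\Gamma e\Gamma)$.

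\emph{Part (2).} I first note that a morphism $g\colon Y\to Z$ in $\EE=\proj\Gamma$ is a deflation if and only if $\coker g\in\DD$: the forward implication is Theorem \ref{finitecorresp}, and conversely, if $\coker g\in\DD\subset\CC_2(\Gamma)$ then Lemma \ref{Schanuel} shows $\ker g$ is finitely generated projective, so $\ker g\to Y\to Z$ is a kernel-cokernel pair by Proposition \ref{aiu} with contravariant defect $\coker g\in\DD$, hence a conflation of $F$ by Theorem \ref{finitecorresp}. Next recall that the two-sided ideal $\Gamma e\Gamma$ is the sum of the images of all right $\Gamma$-linear maps $e\Gamma\to\Gamma$: such a map is multiplication by some $a\in\Gamma e$ and has image $a\Gamma\subseteq\Gamma e\Gamma$. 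Now suppose $\Gamma/\Gamma e\Gamma\in\fl\Gamma$. Since $\Gamma$ is right noetherian, $\Gamma e\Gamma$ is finitely generated as a right $\Gamma$-module, so finitely many of the above maps already have images summing to $\Gamma e\Gamma$; this yields a morphism $g\colon(e\Gamma)^n\to\Gamma$ with $\im g=\Gamma e\Gamma$, so $\coker g\iso\Gamma/\Gamma e\Gamma\in\DD$ and $g$ is a deflation from a projective object of $\EE$ onto $\Gamma$. Forming direct sums of this deflation and composing with split epimorphisms $\Gamma^m\to Z$ produces, for every $Z\in\proj\Gamma$, a deflation onto $Z$ from a projective object; hence $\EE$ has enough projectives. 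Conversely, if $\EE$ has enough projectives, choose a deflation $g\colon P\to\Gamma$ with $P\in\add(e\Gamma)$; writing $P$ as a summand of some $(e\Gamma)^k$ and extending $g$ by zero, the trace description forces $\im g\subseteq\Gamma e\Gamma$, so $\Gamma/\Gamma e\Gamma$ is a quotient of $\coker g\in\DD\subset\fl\Gamma$ and therefore has finite length.

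The trace-ideal bookkeeping, the deflation criterion, and the passage between $\Mod(\proj\Gamma)$ and $\Mod\Gamma$ are all routine. The one place that genuinely requires care is the identification of the simple objects of $\DD$ in part (1): it is here that admissibility (which forces $\DD\subset\fl\Gamma$, so that $\DD$ is recovered from its composition factors) and Proposition \ref{proj} are both indispensable; without admissibility the reduction to simple modules breaks down.
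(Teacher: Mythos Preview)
Your proof is correct and follows essentially the same approach as the paper: both arguments identify the simples in $\DD$ via Proposition \ref{proj} and the characterization of projectives as $\add(e\Gamma)$, and both reduce part (2) to the existence of a map $P\to\Gamma$ with $P\in\add(e\Gamma)$ and cokernel in $\DD$, using the trace-ideal description of $\Gamma e\Gamma$ and noetherianity of $\Gamma$. You are somewhat more explicit than the paper in justifying the deflation criterion (via Lemma \ref{Schanuel} and Proposition \ref{aiu}) and in passing from $\Gamma$ to arbitrary $Z\in\proj\Gamma$, but the structure of the argument is the same.
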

\begin{proof}
(1)
Since $\Gamma$ is semiperfect, $\EE$ is a Krull-Schmidt exact category. By Proposition \ref{proj}, an object $P$ in $\EE$ is projective if and only if $\Hom_\Gamma(P,\DD) =0$. Thus an indecomposable object in $\EE$ is projective in $\EE$ if and only if it is the projective cover of a simple $\Gamma$-module which is not contained in $\DD$.
Therefore, a simple $\Gamma$-module $S$ is contained in $\DD$ if and only if $\Hom_\EE (e \Gamma,S) = 0$, that is, $S e = 0$. Thus, for a $\Gamma$-module $M \in \mod\Gamma$, it follows that $M \in\DD$ holds if and only if $M e = 0$ and $M \in \fl \Gamma$, which implies the assertion.

(2)
Recall that a morphism $g:P \to X$ in $\EE$ is a deflation if and only if $\coker(g)$ in $\mod\Gamma$ is in $\DD$, and every projective resolution of $M \in \DD$ yields a conflation in $\EE$. Thus $\EE$ has enough projectives if and only if there exists an exact sequence
\begin{equation}\label{enougheq}
P \xrightarrow{g} \Gamma \to M \to 0
\end{equation}
 in $\mod\Gamma$ for some objects $P \in \add(e\Gamma)$ and $M \in \DD$.
Suppose that this holds. Then $\im(g) \subset \Gamma e \Gamma$ holds, so we have a surjection $M \defl \Gamma / \Gamma e \Gamma$. Since $M$ is in $\DD$, it follows that $M$ is of finite length, thus so is $\Gamma / \Gamma e \Gamma$.
Conversely, suppose that $\Gamma / \Gamma e \Gamma$ has finite length. Then $\Gamma / \Gamma e \Gamma$ is contained in $\DD = \fl (\Gamma / \Gamma e \Gamma)$. Since $\Gamma$ is noetherian, $\Gamma e \Gamma$ is a finitely generated as a right $\Gamma$-module. Therefore there exists an exact sequence of the form (\ref{enougheq}) with $M = \Gamma / \Gamma e \Gamma$.
\end{proof}
Consequently, we have the following interesting consequences. It is remarkable that we use purely module-theoretical argument to show non-trivial properties of exact categories.
\begin{corollary}\label{projinj}
Let $\EE$ be a Hom-noetherian idempotent complete admissible exact $R$-category of finite type. Then the following holds.
\begin{enumerate}
\item If $R$ is artinian, then $\EE$ has enough projectives and injectives.
\item $\EE$ has enough projectives if and only if $\EE$ has enough injectives.
\end{enumerate}
\end{corollary}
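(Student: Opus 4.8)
The plan is to reduce everything to the purely ring-theoretic content of Proposition~\ref{enough}. Since $\EE$ is Hom-noetherian, idempotent complete and of finite type, it is Krull--Schmidt, so $\EE \equi \proj\Gamma$ where $\Gamma := \End_\EE W$ for an additive generator $W$; moreover $\Gamma$ is semiperfect, and $\EE$ being Hom-noetherian makes $\Gamma$ a finitely generated $R$-module. Let $F$ be the exact structure, $\DD \subseteq \mod\Gamma$ the Serre subcategory corresponding to it via Theorem~\ref{finitecorresp}, and fix an idempotent $e \in \Gamma$ with $e\Gamma$ an additive generator of the projective objects of $\EE$. By Proposition~\ref{enough} we have $\DD = \fl(\un\Gamma)$ with $\un\Gamma := \Gamma/\Gamma e\Gamma$ a noetherian ring module-finite over $R$, and $\EE$ has enough projectives if and only if $\un\Gamma \in \fl\Gamma$. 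I would first record the elementary fact that a $\Gamma$-module which is finitely generated over $R$ has finite length over $\Gamma$ if and only if it has finite $R$-length; in particular a ring $A$ module-finite over $R$ is right artinian iff of finite $R$-length iff left artinian (a simple $A$-module is killed by $\mm$ by Nakayama), so ``$A$ is artinian'' is a left--right symmetric condition.

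For~(1): if $R$ is artinian then $\un\Gamma$, being a finitely generated $R$-module, is automatically of finite length, so $\un\Gamma \in \fl\Gamma$ and $\EE$ has enough projectives by Proposition~\ref{enough}(2). Applying the same reasoning to $(\proj\Gamma^{\op},F^{\op})$ — which is again admissible, since admissibility is left--right symmetric — shows $\proj\Gamma^{\op}$ has enough projectives, i.e. $\EE$ has enough injectives.

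For~(2): by Proposition~\ref{enough}(2) and the first paragraph, $\EE$ has enough projectives iff $\un\Gamma$ is artinian. Dually, $(\proj\Gamma^{\op},F^{\op})$ is admissible with corresponding Serre subcategory $\Ext^2_\Gamma(\DD,\Gamma)$, so Proposition~\ref{enough} applied to it gives $\Ext^2_\Gamma(\DD,\Gamma) = \fl(\ov\Gamma')$ for $\ov\Gamma' := \Gamma^{\op}/\Gamma^{\op}e'\Gamma^{\op}$ (with $e'$ an idempotent adapted to $\EE^{\op}$), and $\EE$ has enough injectives iff $\ov\Gamma'$ is artinian. It thus remains to prove that $\un\Gamma$ is artinian iff $\ov\Gamma'$ is artinian, which I would get by combining two exact dualities. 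On one hand, $\Ext^2_\Gamma(-,\Gamma)$ restricts to an exact duality $\fl(\un\Gamma) = \DD \xrightarrow{\sim} \fl(\ov\Gamma')^{\op}$ (Lemma~\ref{e2}), so $\fl(\un\Gamma)$ has enough injectives iff $\fl(\ov\Gamma')$ has enough projectives iff $\ov\Gamma'$ is artinian. On the other hand, Matlis duality over $R$ is an exact duality $\fl(\un\Gamma) \xrightarrow{\sim} \fl(\un\Gamma^{\op})^{\op}$, so $\fl(\un\Gamma)$ has enough injectives iff $\fl(\un\Gamma^{\op})$ has enough projectives iff $\un\Gamma^{\op}$ is artinian iff $\un\Gamma$ is artinian, by the left--right symmetry above. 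Chaining the two equivalences gives the claim.

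The step I expect to be the main obstacle — and the one drawing on Appendix~A.2 — is the equivalence, used repeatedly above, that for a ring $A$ module-finite over the \emph{complete} local ring $R$ one has: $\fl(A)$ has enough projectives $\iff$ $A$ is artinian. The direction $\Leftarrow$ is immediate since then $\fl(A) = \mod A$. For $\Rightarrow$ one must show that an object of $\fl(A)$ which is projective \emph{in} $\fl(A)$ is already projective over $A$; this is where completeness enters, through an $\mm$-adic (Mittag--Leffler) argument promoting $\Ext^1_A(-,N)=0$ from finite-length $N$ to all finitely generated $N$, after which $A = \bigoplus P(S)$ is a finite direct sum of finite-length projective covers of the simple modules and hence artinian. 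The basic properties of Matlis duality invoked above belong to the same circle of standard facts developed in Appendix~A.2.
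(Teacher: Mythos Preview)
Your overall strategy coincides with the paper's: identify $\EE$ with $\proj\Gamma$, use Proposition~\ref{enough} to translate ``enough projectives/injectives'' into finiteness of $\un\Gamma=\Gamma/\Gamma e\Gamma$ and of the dual quotient, and then transport information between the two via the exact dualities $\Ext^2_\Gamma(-,\Gamma)$ and the Matlis dual $D$. Part~(1) is handled identically (the paper just says ``immediate from Proposition~\ref{enough}(2)'' and leaves the injective half to the obvious dualization you spell out).

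The one genuine divergence is in the endgame of~(2), namely the implication ``$\fl A$ has a projective generator $\Rightarrow$ $A$ is artinian'' for $A$ module-finite over $R$. You propose to show that any projective of $\fl A$ is already $A$-projective via an $\mm$-adic Mittag--Leffler argument, and then conclude $A=\bigoplus P(S)$ has finite length. This does work (the transition maps $\Hom_A(P,N/\mm^{i+1}N)\to\Hom_A(P,N/\mm^iN)$ are surjective because $\Ext^1_A(P,\mm^iN/\mm^{i+1}N)=0$, so $\varprojlim{}^1$ vanishes), but it is heavier than necessary. The paper bypasses this entirely: once $\fl\un\Gamma$ has an injective cogenerator (from $\un\Gamma$ artinian plus Matlis duality), the $\Ext^2$-duality gives $\fl\ov\Gamma^{\op}$ a projective \emph{generator} $P$ of finite length, and then one simply observes that if $\ov\Gamma$ were not artinian, a fixed power $P^k$ would surject onto every $\ov\Gamma/\rad^i\ov\Gamma$ (lifting along the tower and using that the successive kernels lie in the radical), contradicting $\ell(P^k)<\infty$. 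No statement about $A$-projectivity is needed.

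One small correction: Appendix~A.2 does not supply the Mittag--Leffler input you anticipate; it treats AR conflations, AR duality and dualizing varieties, and only records the standard fact that $D$ restricts to a duality on $\fl R$. The step you flag as the main obstacle is thus not something you can cite from there---you would have to prove it yourself, whereas the paper's length argument avoids it altogether.
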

\begin{proof}
(1) Immediate from Proposition \ref{enough}(2).

(2)
It suffices to show the ``only if'' part.
We may assume $\EE = (\proj \Gamma,F)$ for a noetherian $R$-algebra $\Gamma$. Denote by $\DD = \Filt \SS$ the Serre subcategory of $\mod\Gamma$ which corresponds to $F$ under Theorem \ref{2regularcorresp}, and take idempotents $e$ and $f$ in $\Gamma$ such that $e \Gamma$ (resp. $f \Gamma$) is an additive generator of projective objects (resp. injective objects) in $\EE$. Put $\un{\Gamma} := \Gamma / \Gamma e \Gamma$ and $\ov{\Gamma}:= \Gamma / \Gamma f \Gamma$. We know from Proposition \ref{enough}(2) that $\un{\Gamma}$ is of finite length, and it suffices to show that $\ov{\Gamma}$ is of finite length.

Recall that we have a duality $\Ext^2_\Gamma(-,\Gamma):\DD \equi \Ext^2_\Gamma(\DD,\Gamma)$. On the other hand, by Proposition \ref{enough}(1), we have $\DD = \fl \un{\Gamma}$ and $\Ext^2_\Gamma(\DD,\Gamma) = \fl \ov{\Gamma}^{\op}$. Therefore we have a duality $F:\fl \ov{\Gamma}^{\op}\equi \fl \un{\Gamma}$ between two abelian categories.

Observe that $\fl \un{\Gamma}$ has an injective cogenerator $\Hom_R(\Gamma,I)$ by the duality $\Hom_R(-,I):\fl \un{\Gamma} \to \fl\un{\Gamma}^{\op}$, where $I$ is an injective hull of $R/\rad R$. Thus the abelian category $\fl \ov{\Gamma}^{\op}$ has a projective generator, which we denote by $P$.

Suppose that $\ov{\Gamma}$ is not of finite length over $R$. In particular, we have the following infinite chain of proper surjections
\[
 \cdots \defl \ov{\Gamma}/\rad^3 \ov{\Gamma} \defl \ov{\Gamma}/\rad^2 \ov{\Gamma} \defl \ov{\Gamma}/\rad \ov{\Gamma}.
\]
in $\fl \ov{\Gamma}^{\op}$. Take a surjection $f_1:P^k \defl \ov{\Gamma} / \rad \ov{\Gamma}$. Then this lifts to morphisms $f_i : P^k \to \ov{\Gamma} / \rad^i \ov{\Gamma}$. Since the kernel of $ \ov{\Gamma} / \rad^i \ov{\Gamma} \defl \ov{\Gamma} / \rad^{i-1} \ov{\Gamma}$ is contained in $\rad \ov{\Gamma} / \rad^i \ov{\Gamma}$, it follows that $f_i$ is surjective for each $i$, which contradicts the fact that $P^k$ has finite length.
\end{proof}

\begin{remark}
If $R$ is not artinian, $\EE$ does not necessarily has enough projectives. For example, consider Example \ref{veronese}. Then $\proj\Gamma$ has the exact structure corresponding to all the dotted arrows. In this exact structure, there exists no non-zero projective object.
\end{remark}

\subsection{AR conflations and the Grothendieck group}
In this subsection, we investigate the Grothendieck group of exact categories. Several papers showed that the relation of the Grothendieck group $\KKK_0(\EE)$ is generated by AR sequences when $\EE$ is a particular exact category of finite type, e.g. \cite{ar1,but,yo}. Our aim in this subsection is to unify these results.

Let $\EE$ be a Krull-Schmidt category. First we recall the following basic concepts in the AR theory. A morphism $g:Y\to Z$ in $\EE$ is called \emph{right almost split} if $g$ is not a retraction and any non-retraction $h:W \to Z$ factors through $g$. Dually we define \emph{left almost split}.
We say that a conflation $X \xrightarrow{f} Y \xrightarrow{g} Z$ in $\EE$ is an \emph{AR conflation} if $f$ is left almost split and $g$ is right almost split. We say that $\EE$ \emph{has AR conflations} if for every indecomposable non-projective object $Z$ there exists an AR conflation ending at $Z$, and for every indecomposable non-injective object $X$ there exists an AR conflation starting at $X$.
For further properties of AR conflations, we refer the reader to Appendix A.1. In Corollary \ref{exist}, we will prove that Krull-Schmidt exact category $\EE$ has AR conflations if $\EE$ is of finite type and the endomorphism ring of an additive generator of $\EE$ is noetherian.

Next we introduce some notation concerning the Grothendieck group. For a Krull-Schmidt exact category $\EE$, let $\GGG(\EE)$ be the free abelian group $\bigoplus_{[X] \in \ind \EE} \mathbb{Z}\cdot[X]$ generated by the set $\ind \EE$ of isomorphism classes of indecomposable objects in $\EE$.
We denote by $\Ex(\EE)$ the subgroup of $\GGG(\EE)$ generated by
\[
\{ [X] - [Y] + [Z] \text{ $|$ there exists a conflation $X \infl Y \defl Z$ in $\EE$} \}.
\]
We call the quotient group $\KKK_0(\EE) :=\GGG(\EE) / \Ex(\EE)$ the \emph{Grothendieck group} of $\EE$.
We denote by $\AR(\EE)$ the subgroup of $\Ex(\EE)$ generated by
\[
\{ [X] - [Y] + [Z] \text{ $|$ there exists an AR conflation $X \infl Y \defl Z$ in $\EE$} \}.
\]

Now we prove the main result about the relation of the Grothendieck groups.
\begin{theorem}\label{groth}
Let $\EE$ be a Krull-Schmidt exact category of finite type such that the endomorphism ring of an additive generator of $\EE$ is noetherian. If the exact structure on $\EE$ is admissible, then $\Ex(\EE) = \AR(\EE)$ holds.
\end{theorem}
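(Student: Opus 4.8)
The plan is to pass to the standard model $\EE = \proj\Gamma$, where $\Gamma := \End_\EE(M)$ is the endomorphism ring of an additive generator $M$ of $\EE$; it is noetherian by hypothesis and semiperfect because $\EE$ is Krull--Schmidt. Let $\DD \subseteq \CC_2(\Gamma)$ be the Serre subcategory of $\mod\Gamma$ corresponding to the exact structure under Theorem~\ref{finitecorresp}. Admissibility gives $\DD \subseteq \fl\Gamma$, so Theorem~\ref{2regularcorresp} shows $\DD = \Filt\SS$ for a set $\SS$ of simple $\Gamma$-modules satisfying the $2$-regular condition. The device that drives the proof is the ``Euler class'' assignment
\[
\langle-\rangle\colon \DD \longrightarrow \GGG(\EE),\qquad \langle M\rangle := [P_2]-[P_1]+[P_0],
\]
computed from an arbitrary projective resolution $0\to P_2\to P_1\to P_0\to M\to 0$ in $\mod\Gamma$ (which exists with finitely generated terms of length two, since $\DD\subseteq\CC_2(\Gamma)$ and $\Gamma$ is noetherian). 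By Schanuel's lemma and uniqueness of direct-sum decompositions in the Krull--Schmidt category $\EE$, the class $\langle M\rangle$ does not depend on the chosen resolution; and by the horseshoe lemma $\langle-\rangle$ is additive on short exact sequences in $\DD$.

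First I would express both relevant subgroups of $\GGG(\EE)$ in terms of $\langle-\rangle$. By the explicit description in Theorem~\ref{finitecorresp}, a conflation $X\infl Y\xrightarrow{g} Z$ in $\EE$ is precisely an exact sequence $0\to X\to Y\to Z\to M\to 0$ in $\mod\Gamma$ with $M=\coker g\in\DD$, and the associated generator of $\Ex(\EE)$ is exactly $[X]-[Y]+[Z]=\langle M\rangle$; conversely every $M\in\DD$ arises from some conflation. Hence $\Ex(\EE)=\sum_{M\in\DD}\Z\,\langle M\rangle$.

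Next I would analyse the AR conflations. By Corollary~\ref{exist}, $\EE$ has AR conflations, and by Proposition~\ref{proj} the indecomposable non-projective objects of $\EE$ are exactly the projective covers $P_S$ of the simples $S\in\SS$. For such an $S$, the deflation of the AR conflation ending at $P_S$ is the minimal right almost split morphism of the additive category $\proj\Gamma$ ending at $P_S$; a non-retraction $W\to P_S$ from a projective $W$ is the same as a morphism with image inside the unique maximal submodule $\rad P_S$, so this minimal morphism is the composite $Q\twoheadrightarrow\rad P_S\hookrightarrow P_S$ of a projective cover with the radical inclusion. Its cokernel in $\mod\Gamma$ is therefore $P_S/\rad P_S=S$, and its kernel is the second syzygy $\Omega^2_\Gamma S$, which is projective exactly because $\pd S_\Gamma=2$ and which equals $\tau P_S$ by Definition~\ref{const}. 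Thus the AR conflation ending at $P_S$ is carried by the minimal projective resolution $0\to\tau P_S\to Q\to P_S\to S\to 0$, so its generator in $\AR(\EE)$ equals $\langle S\rangle$; since every AR conflation ends at an indecomposable non-projective and is thus of this form, $\AR(\EE)=\sum_{S\in\SS}\Z\,\langle S\rangle$.

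Finally, since $\DD=\Filt\SS$, every $M\in\DD$ has a finite filtration with subquotients in $\SS$, so additivity of $\langle-\rangle$ forces $\langle M\rangle\in\sum_{S\in\SS}\Z\,\langle S\rangle$. Combined with the two identifications above this yields $\Ex(\EE)=\sum_{M\in\DD}\Z\,\langle M\rangle=\sum_{S\in\SS}\Z\,\langle S\rangle=\AR(\EE)$, the reverse inclusion $\AR(\EE)\subseteq\Ex(\EE)$ being clear since AR conflations are conflations. \textbf{The step I expect to be the main obstacle} is the computation of the AR conflation ending at $P_S$ inside $\proj\Gamma$ --- identifying its middle term with a projective cover of $\rad P_S$ and its left-hand term with $\Omega^2_\Gamma S=\tau P_S$ --- since this is the only place where the $2$-regular condition (equivalently, admissibility together with finite type) genuinely enters; everything else is bookkeeping with Schanuel's lemma and the horseshoe lemma.
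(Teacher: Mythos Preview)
Your proposal is correct and follows essentially the same route as the paper's proof: reduce to $\EE=\proj\Gamma$, use Schanuel's lemma to make the alternating sum $[X]-[Y]+[Z]$ depend only on $M=\coker g\in\DD=\Filt\SS$, use the horseshoe lemma to obtain additivity on composition series, and identify the AR conflation ending at $P_S$ with the minimal projective resolution of $S$ (the paper does this last step by appealing to the proof of Proposition~\ref{arconfl}, which is exactly your observation that the projective cover of $\rad P_S$ is right almost split and the minimality forces the left map to be in the radical). Your explicit ``Euler class'' packaging $\langle-\rangle$ is a mild cosmetic improvement over the paper's induction on $l(M)$, but the content is the same.
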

\begin{proof}
Since $\AR(\EE) \subset \Ex(\EE)$ always holds, we will check $\Ex(\EE) \subset \AR(\EE)$. Let $\Gamma$ be the endomorphism ring of an additive generator of $\EE$. Then $\EE \equi \proj \Gamma$ holds, so we may assume that $\EE = \proj\Gamma$ for a semiperfect noetherian ring $\Gamma$.
Denote by $\SS$ the set of simple $\Gamma$-modules corresponding to the exact structure on $\EE = \proj\Gamma$ under Theorem \ref{2regularcorresp}.

Let $X_1 \xrightarrow{f_1} Y_1 \xrightarrow{g_1} Z_1$ be a conflation in $\EE$ and $M := \coker(g_1)$ in $\mod\Gamma$. Then $M$ is in $\Filt\SS$ by Theorem \ref{2regularcorresp}. We show $[X_1] - [Y_1] + [Z_1] \in \AR(\EE)$.
Suppose that there exists another conflation $X_2 \xrightarrow{f_2} Y_2 \xrightarrow{g_2} Z_2$ in $\EE$ such that $M \iso \coker(g_2)$. Then we have the exact sequences
\[
0 \to {X_i} \xrightarrow{f_i} Y_i \xrightarrow{g_i} Z_i \to M \to 0
\]
in $\mod\Gamma$ for each $i=1,2$. Thus Schanuel's lemma shows that $X_1 \oplus Y_2 \oplus Z_1 \iso X_2 \oplus Y_1 \oplus Z_2$, which implies that $[X_1] - [Y_1] + [Z_1] = [X_2] - [Y_2] + [Z_2]$ in $\GGG(\EE)$. Thus it suffices to show the following claim to prove our theorem.

\emph{Claim: For any $M \in \Filt\SS$, there exists at least one exact sequence in $\mod\Gamma$
\[
0 \to X \to Y \to Z \to M \to 0
\] with $X,Y,Z \in \proj\Gamma$ and $[X]-[Y]+[Z] \in \AR(\EE)$.}

We will show this claim by induction on $l(M)$, the length of $M$ as a $\Gamma$-module.
Suppose that $l(M) = 1$, that is, $M\in \SS$. Take the minimal projective resolution $0 \to X \xrightarrow{f} Y \xrightarrow{g} Z \to M \to 0$ of $M$ with $X,Y,Z \in \proj\Gamma = \EE$. By the proof of Proposition \ref{arconfl}, we have that $X \xrightarrow{f} Y \xrightarrow{g} Z$ is indeed an AR-conflation in $\EE$. Thus $[X] - [Y] + [Z] \in \AR(\EE)$.

Now suppose that $l(M) > 1$. Take a simple $\Gamma$-module $M_1 \in \SS$ which is a submodule of $M$. Then we have an exact sequence $0 \to M_1 \to M \to M_2 \to 0$, where $M_1$ and $M_2$ are in $\Filt \SS$. Since $l(M_1),l(M_2)<l(M)$, we have the corresponding projective resolutions $0 \to X_i \xrightarrow{f_i} Y_i \xrightarrow{g_i} Z_i \to M_i \to 0$ such that $[X_i] -[Y_i] + [Z_i] \in \AR(\EE)$ for $i=1,2$ by induction hypothesis. By the horseshoe lemma, we obtain a projective resolution $0 \to X_1 \oplus X_2 \xrightarrow{f_1 \oplus f_2} Y_1\oplus Y_2 \xrightarrow{g_1 \oplus g_2} Z_1 \oplus Z_2 \to M \to 0$. Then we have
\[
[X_1\oplus X_2 ] -[Y_1\oplus Y_2] + [Z_1 \oplus Z_2]
=\sum_{i=1,2}([X_i] -[Y_i] + [Z_i]) \in \AR(\EE),
\]
which completes the proof of the claim.
\end{proof}

\begin{corollary}\label{grocor}
Let $R$ be a noetherian complete local ring and $\EE$ a Hom-noetherian idempotent complete exact $R$-category of finite type. Suppose either $R$ is artinian or $\EE$ has enough projectives. Then $\Ex(\EE) = \AR(\EE)$ holds.
\end{corollary}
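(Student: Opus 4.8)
The plan is to deduce this directly from Theorem \ref{groth}, so that the proof reduces to checking that theorem's hypotheses in the present situation. First I would fix an additive generator $M$ of $\EE$ and put $\Gamma := \End_\EE(M)$, so that $\EE \equi \proj\Gamma$. Since $\EE$ is Hom-noetherian over the noetherian ring $R$, the algebra $\Gamma$ is module-finite over $R$, hence a noetherian ring, and $\EE$ is Krull-Schmidt because it is Hom-noetherian and idempotent complete over the noetherian complete local ring $R$. Thus the only hypothesis of Theorem \ref{groth} still to be verified is that the given exact structure on $\EE$ is admissible.

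To establish admissibility I would treat the two cases of the statement separately. If $R$ is artinian, then $\Gamma$, being a finitely generated module over the artinian ring $R$, is itself an artinian ring, and every exact structure on $\proj\Gamma$ is admissible by the observation following Definition \ref{admdef}. If instead $\EE$ has enough projectives, then $\EE$ is Ext-noetherian by Proposition \ref{admissible}(1); being also of finite type, it is then Ext-finite by Proposition \ref{admissible}(2), and hence admissible by Proposition \ref{admissible}(3). In either case the exact structure on $\EE$ is admissible.

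Having verified that $\EE$ is a Krull-Schmidt exact category of finite type whose additive generator has noetherian endomorphism ring, and that its exact structure is admissible, I would invoke Theorem \ref{groth} to conclude $\Ex(\EE) = \AR(\EE)$.

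The argument is essentially an assembly of the results already established, so I do not anticipate a genuine obstacle; the only point needing a little care is that in the artinian case one must \emph{not} appeal to Corollary \ref{projinj}(1) (which presupposes admissibility and would make the reasoning circular), but should instead obtain admissibility directly from the fact that $\Gamma$ is an artinian ring.
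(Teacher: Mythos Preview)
Your proposal is correct and follows the same route as the paper: verify admissibility in each case and then invoke Theorem \ref{groth}. The paper's own proof is a one-liner citing only Proposition \ref{admissible} for both cases; your argument is more careful in the artinian case, where you bypass Proposition \ref{admissible} and instead use directly that $\Gamma$ is artinian (so $\mod\Gamma=\fl\Gamma$), which is indeed the cleanest way to see admissibility there and avoids the potential circularity you flag.
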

\begin{proof}
In both cases, $\EE$ is an admissible exact category by Proposition \ref{admissible}. Thus Theorem \ref{groth} applies.
\end{proof}

\section{Classifications of CM-finite algebras}
In this section, we apply our previous results to the representation theory of Iwanaga-Gorenstein algebras and orders. More generally, we study the left perpendicular category $^\perp U$ for a cotilting module $U$. \emph{Throughout this section, we fix a commutative noetherian complete local ring $R$}.

\subsection{Cotilting modules}
First, we introduce the notion of \emph{cotilting module} following \cite{applications}. Let $\Lambda$ be a noetherian ring and $U \in \mod\Lambda$ a $\Lambda$-module. We denote by $^\perp U$ the subcategory of $\mod\Lambda$ consisting of all modules $X$ satisfying $\Ext^{>0}_\Lambda(X,U)=0$. Since $^\perp U$ is an extension-closed subcategory of $\mod\Lambda$, we always regard $^\perp U$ as an exact category.
\begin{definition}\label{cotilting}
We say that $U$ is a \emph{cotilting module} if it satisfies the following conditions.
\begin{enumerate}
\item[(C1)] $\id U_\Lambda$ is finite.
\item[(C2)] $\Ext^{>0}_\Lambda(U,U) =0$.
\item[(C3)] $^\perp U$ has enough injectives $\add U$, that is, for every $X$ in $^\perp U$, there exists an exact sequence
\[
0 \to X \to U^0 \to Y \to 0
\]
in $\mod\Lambda$ with $Y \in {}^\perp U$ and $U^0 \in \add U$.
\end{enumerate}
\end{definition}

We shall see in Proposition \ref{ordercase} that if we restrict to $R$-orders, then our definition of cotilting modules coincides with the usual one, e.g. in \cite{higher}.
From our definition, the following property is immediate. Here we say that an object $X$ in an exact category $\EE$ is a \emph{projective generator} (resp. \emph{injective cogenerator}) if $\EE$ has enough projectives $\add X$ (resp. enough injectives $\add X$).
\begin{proposition}\label{cotiltingexact}
Let $\Lambda$ be a noetherian ring and $U \in \mod\Lambda$ a cotilting $\Lambda$-module. Then $^\perp U$ is an exact category with a projective generator $\Lambda$ and an injective cogenerator $U$.
\end{proposition}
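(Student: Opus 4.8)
The plan is to read everything off directly from the definition of a cotilting module. Recall that $^\perp U$ is regarded as an exact category with the structure inherited from $\mod\Lambda$: a sequence $X \infl Y \defl Z$ is a conflation exactly when $0 \to X \to Y \to Z \to 0$ is a short exact sequence in $\mod\Lambda$ whose three terms all lie in $^\perp U$. Two things must be checked: that $\Lambda$ is a projective generator, and that $U$ is an injective cogenerator, of this exact category.

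For the projective generator, first observe that $\proj\Lambda \subseteq {}^\perp U$, since $\Ext^{>0}_\Lambda(P,U)=0$ for every projective $P$. Next, each $P \in \proj\Lambda$ is projective as an object of $^\perp U$: a deflation is in particular an epimorphism of $\Lambda$-modules, and any morphism out of $P$ lifts along epimorphisms of $\Lambda$-modules. It remains to produce enough projectives. Given $X \in {}^\perp U$, I would choose a surjection $p\colon P \defl X$ with $P \in \proj\Lambda$ --- available because $\Lambda$ is noetherian, so $\mod\Lambda$ has enough projectives --- and set $K := \ker p$. Applying $\Hom_\Lambda(-,U)$ to $0 \to K \to P \to X \to 0$ gives, for each $i \geq 1$, an exact portion
\[
\Ext^i_\Lambda(P,U) \to \Ext^i_\Lambda(K,U) \to \Ext^{i+1}_\Lambda(X,U)
\]
whose outer terms vanish ($P$ is projective and $X \in {}^\perp U$), so $\Ext^{>0}_\Lambda(K,U)=0$, i.e. $K \in {}^\perp U$. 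Hence $0 \to K \to P \to X \to 0$ is a conflation in $^\perp U$ and $p$ is a deflation, which shows that $^\perp U$ has enough projectives $\add\Lambda$.

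For the injective cogenerator the argument is dual but shorter. Condition (C2) gives $\Ext^{>0}_\Lambda(U,U)=0$, so $\add U \subseteq {}^\perp U$. Given a conflation $X \infl Y \defl Z$ in $^\perp U$, applying $\Hom_\Lambda(-,U)$ produces an exact sequence
\[
\Hom_\Lambda(Y,U) \to \Hom_\Lambda(X,U) \to \Ext^1_\Lambda(Z,U)
\]
whose right-hand term vanishes because $Z \in {}^\perp U$; therefore every morphism $X \to U$ extends along the inflation $X \infl Y$, so $U$ --- and hence each object of $\add U$ --- is injective in $^\perp U$. Finally, $^\perp U$ has enough injectives $\add U$ by (C3) itself. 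Combining these observations yields the claim.

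I do not expect a genuine obstacle here; the statement is essentially a bookkeeping exercise. The only slightly delicate points are that a syzygy (taken in $\mod\Lambda$) of an object of $^\perp U$ again lies in $^\perp U$, so that ordinary projective resolutions restrict to conflations, and that $\mod\Lambda$-projectivity (resp. injectivity of $U$) is inherited by the exact subcategory --- both of which follow at once from the extension-closedness of $^\perp U$ and the long exact sequence for $\Ext_\Lambda(-,U)$.
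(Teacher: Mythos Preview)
Your proof is correct. The paper gives no explicit proof here --- it simply remarks that the statement is immediate from the definition of a cotilting module --- and your argument is precisely the routine verification one would supply: closure of $^\perp U$ under syzygies gives enough projectives $\add\Lambda$, while (C2) and (C3) give the injective cogenerator $U$.
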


Let $R$ be a Cohen-Macaulay local ring admitting a canonical module $\omega$, for example, complete Cohen-Macaulay local ring. For a noetherian $R$-algebra $\Lambda$, we denote by $\CM \Lambda$ the subcategory of $\mod\Lambda$ consisting of modules which are maximal Cohen-Macaulay as $R$-modules. A noetherian $R$-algebra $\Lambda$ is called an \emph{$R$-order} if $\Lambda\in\CM\Lambda$ holds. For an $R$-order $\Lambda$, there exists a duality $D_d:= \Hom_R(-,\omega) :\CM \Lambda \equi \CM \Lambda^{\op}$. It is immediate that $\CM \Lambda$ is an extension-closed subcategory of $\mod\Lambda$ with a projective generator $\Lambda$ and an injective cogenerator $D_d\Lambda$.

We prepare the following well-known properties of $R$-orders.
\begin{lemma}\label{wellknown}
Let $R$ be a $d$-dimensional complete Cohen-Macaulay local ring, $\Lambda$ an $R$-order and $M \in \CM\Lambda$. Then the following holds.
\begin{enumerate}
\item $\CM \Lambda = {}^\perp (D_d \Lambda)$ holds, and $\id (D_d \Lambda)_\Lambda = d$.
\item $\id M_\Lambda \geq d$ holds, and $\id M_\Lambda \leq m$ if and only if $\Ext^{>m-d}_\Lambda(X,M)=0$ for all $X \in \CM\Lambda$.
\end{enumerate}
\end{lemma}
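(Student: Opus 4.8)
The plan is to reduce both parts to standard properties of the canonical module $\omega$ over the Cohen--Macaulay local ring $R$: namely $\id_R\omega=d$; a finitely generated $R$-module $N$ is maximal Cohen--Macaulay if and only if $\Ext^{>0}_R(N,\omega)=0$; and for $N$ of finite length $\Ext^i_R(N,\omega)=0$ for $i\neq d$ while $\Ext^d_R(N,\omega)\cong\Hom_R(N,E)$, where $E$ is an injective hull of the residue field. Throughout I use that $\Lambda$, being module-finite over the complete local ring $R$, is semiperfect, and that the central $R$-action makes $\Lambda$ maximal Cohen--Macaulay on both sides.

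\textbf{Part (1).} The crucial point is a natural isomorphism $\Ext^i_\Lambda(X,D_d\Lambda)\cong\Ext^i_R(X,\omega)$ for all $X\in\mod\Lambda$ and $i\ge 0$. To get it, fix an $R$-injective resolution $0\to\omega\to I^0\to\cdots\to I^d\to 0$ and apply $\Hom_R(\Lambda,-)$: since $\Lambda\in\CM\Lambda$ we have $\Ext^{>0}_R(\Lambda,\omega)=0$, so the result is a $\Lambda$-injective resolution $0\to D_d\Lambda\to\Hom_R(\Lambda,I^0)\to\cdots\to\Hom_R(\Lambda,I^d)\to 0$ (each $\Hom_R(\Lambda,I^j)$ is $\Lambda$-injective), and $\Ext^\bullet_\Lambda(X,D_d\Lambda)$ is computed from it via the adjunction $\Hom_\Lambda(X,\Hom_R(\Lambda,I^j))\cong\Hom_R(X,I^j)$. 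Hence $\id(D_d\Lambda)_\Lambda\le d$, and $X\in{}^\perp(D_d\Lambda)$ iff $\Ext^{>0}_R(X,\omega)=0$ iff $X$ is maximal Cohen--Macaulay over $R$, i.e.\ iff $X\in\CM\Lambda$; finally $\id(D_d\Lambda)_\Lambda=d$ exactly, because for any simple $\Lambda$-module $S$ one has $\Ext^d_\Lambda(S,D_d\Lambda)\cong\Ext^d_R(S,\omega)\neq 0$ as $\depth_R S=0$.

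\textbf{Part (2).} The engine is the formula $\id M_\Lambda=d+\pd(D_dM)_{\Lambda^{\op}}$ (with the conventions $\pd 0=-\infty$ and $\id M_\Lambda=\infty$ when $\pd(D_dM)_{\Lambda^{\op}}=\infty$). Granting it, $\id M_\Lambda\ge d$ is immediate since $M\neq 0$ forces $D_dM\neq 0$, hence $\pd(D_dM)_{\Lambda^{\op}}\ge 0$. For the equivalence, combine the formula with two further inputs. First, $D_d$ is an exact duality $\CM\Lambda\equi\CM\Lambda^{\op}$ between resolving subcategories (each contains $\add\Lambda$ and is closed under syzygies by the depth lemma), so Yoneda $\Ext$ is preserved and $\Ext^i_\Lambda(X,M)\cong\Ext^i_{\Lambda^{\op}}(D_dM,D_dX)$ for $X\in\CM\Lambda$; thus ``$\Ext^{>m-d}_\Lambda(X,M)=0$ for all $X\in\CM\Lambda$'' is equivalent to ``$\Ext^{>m-d}_{\Lambda^{\op}}(D_dM,N)=0$ for all $N\in\CM\Lambda^{\op}$''. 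Second, an iterated depth lemma (using $\Lambda\in\CM\Lambda$) gives $\Omega^d N'\in\CM\Lambda$ for every $N'\in\mod\Lambda$; dimension-shifting then shows that $\Ext^{>m-d}_\Lambda(X,M)=0$ for all $X\in\CM\Lambda$ implies $\Ext^{>m}_\Lambda(-,M)=0$, i.e.\ $\id M_\Lambda\le m$. The converse is the formula read backwards: $\id M_\Lambda\le m$ gives $\pd(D_dM)_{\Lambda^{\op}}\le m-d$, hence the desired $\Ext$-vanishing.

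\textbf{The main obstacle: proving the formula.} Using $\id M_\Lambda=\sup\{\,i : \Ext^i_\Lambda(S,M)\neq 0,\ S\text{ simple}\,\}$ and $\pd(D_dM)_{\Lambda^{\op}}=\sup\{\,s : \Tor^\Lambda_s(S,D_dM)\neq 0,\ S\text{ simple}\,\}$, it suffices to construct, for each simple $S$, an isomorphism $\Ext^{d+s}_\Lambda(S,M)\cong\Hom_R(\Tor^\Lambda_s(S,D_dM),E)$. I would proceed as follows: (i) write $M\cong D_d(D_dM)$ and take a projective resolution $\cdots\to P_1\to P_0\to D_dM\to 0$ over $\Lambda^{\op}$, all of whose syzygies lie in $\CM\Lambda^{\op}$, so that applying the exact functor $D_d$ yields an exact complex $0\to M\to D_dP_0\to D_dP_1\to\cdots$; (ii) repeating the injective-resolution argument of Part (1), together with the adjunction $\Hom_\Lambda(S,\Hom_R(P_j,I^\bullet))\cong\Hom_R(S\otimes_\Lambda P_j,I^\bullet)$, show that $\Ext^i_\Lambda(S,D_dP_j)$ is concentrated in degree $i=d$ and equals $\Hom_R(S\otimes_\Lambda P_j,E)$, using that $S\otimes_\Lambda P_j$ has finite length over $R$; (iii) feed this into the hyper-$\Ext$ spectral sequence $E_1^{s,i}=\Ext^i_\Lambda(S,D_dP_s)\Rightarrow\Ext^{s+i}_\Lambda(S,M)$, which collapses since only the row $i=d$ survives, and identify the surviving complex with $\Hom_R(S\otimes_\Lambda P_\bullet,E)$, whose cohomology is $\Hom_R(\Tor^\Lambda_s(S,D_dM),E)$ because $\Hom_R(-,E)$ is exact. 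The delicate part is exactly this bookkeeping — tracking the left/right $\Lambda$-structures, the exactness of $D_d$ and of $\Hom_R(-,E)$ on the relevant subcategories, and the degeneration of the spectral sequence; everything else is routine, and the full details may also be extracted from \cite{lw,yo,higher}.
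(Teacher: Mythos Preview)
Your proof is correct. The paper, by contrast, does not prove this lemma at all: its entire argument is the single sentence ``Both follow from \cite[Proposition 1.1]{gn}.'' So there is nothing to compare in terms of approach --- you have supplied a self-contained argument where the paper simply defers to the literature. Your proof of (1) via the adjunction $\Hom_\Lambda(-,\Hom_R(\Lambda,I^\bullet))\cong\Hom_R(-,I^\bullet)$ is the standard one; your proof of (2) via the identity $\id M_\Lambda = d + \pd(D_dM)_{\Lambda^{\op}}$, established through the collapsing spectral sequence and local duality for finite-length modules, is exactly the content behind the cited result of Goto--Nishida. One minor remark: your backward implication in (2) (via $\Omega^d N'\in\CM\Lambda$ and dimension-shifting) already gives $\id M_\Lambda\le m$ without invoking the formula, so the formula is really only needed for the forward implication and for the inequality $\id M_\Lambda\ge d$; you might streamline accordingly.
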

\begin{proof}
Both follow from \cite[Proposition 1.1]{gn}.
\end{proof}
\begin{proposition}\label{ordercase}
Let $R$ be a $d$-dimensional complete Cohen-Macaulay local ring, $\Lambda$ a noetherian $R$-algebra and $U$ a finitely generated $\Lambda$-module.
\begin{enumerate}
\item Suppose that $\Lambda$ is an $R$-order and $U \in \CM\Lambda$. Then $U$ is a cotilting module if and only if $U$ satisfies the condition \upshape{(C1), (C2)} in Definition \ref{cotilting} and the following.
\begin{enumerate}
\item[\upshape{(C$3'$)}] There exists an exact sequence
\[
0 \to U_n \to \cdots \to U_1 \to U_0 \to D_d \Lambda \to 0
\]
in $\mod\Lambda$ for some $n$ such that $U_i \in \add U$ for each $i$.
\end{enumerate}
In particular, $U$ is a cotilting module with $\id U_\Lambda \leq d$ if and only if $\add U = \add D_d \Lambda$.

\item Suppose that $U$ is a cotilting $\Lambda$-module. Then $^\perp U \subset \CM \Lambda$ holds if and only if $\Lambda$ is an $R$-order and $U \in \CM\Lambda$ holds.
\end{enumerate}
\end{proposition}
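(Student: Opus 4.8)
\emph{Strategy.} Conditions (C1) and (C2) occur verbatim in Definition~\ref{cotilting}, so the real content of part~(1) is the equivalence ``(C3)$\Leftrightarrow$(C3$'$)'' when $\Lambda$ is an $R$-order and $U\in\CM\Lambda$, and part~(2) is what is needed to keep $^\perp U$ inside $\CM\Lambda$. The tools I would use throughout are: the exact duality $D_d=\Hom_R(-,\omega)\colon\CM\Lambda\equi\CM\Lambda^{\op}$; Lemma~\ref{wellknown}, especially $\CM\Lambda={}^\perp(D_d\Lambda)$, $\id(D_d\Lambda)_\Lambda=d$, and the consequent identity $\id M_\Lambda=d+\id_{\CM\Lambda}M$ for $M\in\CM\Lambda$; Proposition~\ref{cotiltingexact}, which realises $^\perp U$ as an exact category with projective generator $\Lambda$ and injective cogenerator $U$; and the depth lemma $\depth_R A\ge\min(\depth_R B,\depth_R C+1)$ for a short exact sequence $0\to A\to B\to C\to 0$ in $\mod\Lambda$, together with its consequence that projective resolutions of objects of $\CM\Lambda$ stay in $\CM\Lambda$ — so that $\Ext^i_\Lambda(X,Y)$ for $X,Y\in\CM\Lambda$ coincides with the Yoneda $\Ext$ of the exact category $\CM\Lambda$ and is therefore carried by $D_d$ to $\Ext^i_{\Lambda^{\op}}(D_dY,D_dX)$.

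\emph{Proof of part~(2).} The ``only if'' direction is immediate: $\Lambda_\Lambda$ is projective, hence $\Lambda\in{}^\perp U$, and $U\in{}^\perp U$ by (C2), so $^\perp U\subseteq\CM\Lambda$ forces $\Lambda,U\in\CM\Lambda$, i.e.\ $\Lambda$ is an $R$-order and $U\in\CM\Lambda$. For the ``if'' direction I would assume $d\ge 1$ (if $d=0$ then $\CM\Lambda=\mod\Lambda$) and argue by contradiction: if some nonzero $X\in{}^\perp U$ has $\depth_R X<d$, choose one with $\depth_R X$ minimal; by (C3) there is a short exact sequence $0\to X\to U^0\to X'\to 0$ in $\mod\Lambda$ with $U^0\in\add U\subseteq\CM\Lambda$ and $X'\in{}^\perp U$; since $\depth_R X<d=\depth_R U^0$ we have $X\not\cong U^0$, hence $X'\ne 0$, and the depth lemma gives $\depth_R X'<\depth_R X<d$, contradicting minimality. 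So $^\perp U\subseteq\CM\Lambda$.

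\emph{Proof of part~(1).} Assume first $U$ is cotilting; then $^\perp U\subseteq\CM\Lambda$ by part~(2), so $^\perp U={}^\perp U\cap\CM\Lambda$, and the $\Ext$-comparison above shows the cotilting axioms for $U$ over $\Lambda$ say exactly that $U$ is a cotilting object of the exact category $\CM\Lambda$ (whose injective cogenerator is $D_d\Lambda$): (C1) gives $\id_{\CM\Lambda}U=\id U_\Lambda-d<\infty$, (C2) gives $\Ext^{>0}_{\CM\Lambda}(U,U)=0$, and (C3) says $^\perp U$ has enough injectives $\add U$ inside $\CM\Lambda$. Applying the exact duality $D_d$, this means $D_dU$ is a tilting object of $\CM\Lambda^{\op}$ with projective generator ${}_\Lambda\Lambda$; by the (co)tilting theory of such exact categories (the exact-category analogue of the classical Bongartz-type results; cf.\ \cite{applications, higher}), ${}_\Lambda\Lambda$ admits a finite coresolution by $\add(D_dU)$ whose terms and cosyzygies all lie in $\CM\Lambda^{\op}$, and applying $D_d$ to it yields a finite $\add U$-resolution of $(D_d\Lambda)_\Lambda$, which is (C3$'$). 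Conversely, assume (C1), (C2), (C3$'$): a dimension shift through the $\add U$-resolution of $D_d\Lambda$, using $\Ext^{>0}_\Lambda(X,\add U)=0$ for $X\in{}^\perp U$, shows $\Ext^{>0}_\Lambda(X,D_d\Lambda)=0$, i.e.\ $^\perp U\subseteq\CM\Lambda$; running the correspondence backwards (equivalently, the usual cotilting lemma) then produces, for each $X\in{}^\perp U$, an inflation $X\to U^0$ with $U^0\in\add U$ and cokernel in $^\perp U$, which is (C3). For the ``in particular'' clause: if $U$ is cotilting with $\id U_\Lambda\le d$, then $\id U_\Lambda=d$ by Lemma~\ref{wellknown}(2), so $\Ext^{>0}_\Lambda(\CM\Lambda,U)=0$, i.e.\ $\CM\Lambda\subseteq{}^\perp U$; with part~(2) this gives $^\perp U=\CM\Lambda={}^\perp(D_d\Lambda)$, and uniqueness of the injective cogenerator (up to additive closure) forces $\add U=\add D_d\Lambda$. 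Conversely, if $\add U=\add D_d\Lambda$ then $U$ is cotilting with $\id U_\Lambda=\id(D_d\Lambda)_\Lambda=d$, since (C2) holds as $D_d\Lambda\in\CM\Lambda={}^\perp(D_d\Lambda)$ and (C3) holds because $D_d\Lambda$ is by construction the injective cogenerator of $\CM\Lambda={}^\perp(D_d\Lambda)$.

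\emph{Main obstacle.} The delicate point is making ``(C3)$\Leftrightarrow$(C3$'$)'' precise: one must check that, for $\Lambda$ an $R$-order and $U\in\CM\Lambda$, every piece of homological data entering the cotilting property of $U$ over $\Lambda$ — Ext-vanishing, (co)resolutions, approximations, the identity of the injective cogenerator — can be computed inside the exact category $\CM\Lambda$, so that the exact duality $D_d$ transports it faithfully to the tilting picture over $\CM\Lambda^{\op}$, where the classical constructions (and in particular a Bongartz-type (T3)-coresolution staying inside $\CM\Lambda^{\op}$) become available. This is exactly where the hypotheses $\Lambda\in\CM\Lambda$ and $U\in\CM\Lambda$, via part~(2), are used; everything else reduces to the depth lemma, Lemma~\ref{wellknown}, and standard (co)tilting theory.
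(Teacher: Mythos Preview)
Your argument is correct, and the overall structure matches the paper's; the differences are in execution, not in substance.

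For $^\perp U\subseteq\CM\Lambda$ (the ``if'' of part~(2), also needed inside part~(1)): you run a minimal-depth contradiction through a single application of (C3), while the paper iterates (C3) $d$ times to obtain $0\to X\to U^0\to\cdots\to U^{d-1}$ and then applies the depth lemma once to conclude $\depth_R X\ge d$. Both are fine; yours is marginally slicker.

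For (C3) $\Rightarrow$ (C3$'$): you transport everything through $D_d$ to the tilting side in $\CM\Lambda^{\op}$ and invoke an exact-category Bongartz-type statement. The paper stays on the cotilting side and uses Auslander--Buchweitz approximation theory directly (\cite{mcm}, \cite{en}): once $^\perp U\subseteq\CM\Lambda$ is established, one has $\Ext^{>0}_\Lambda({}^\perp U,D_d\Lambda)=0$, and AB theory gives $D_d\Lambda\in\widehat{U}$ immediately. Your duality route is valid, but the ``Bongartz-type result in exact categories'' you appeal to is itself the dual of AB approximation, so the paper's path is one translation step shorter and makes the key input explicit. For (C3$'$) $\Rightarrow$ (C3) and the ``in particular'' clause, your argument and the paper's coincide: both defer to \cite{applications} (with \cite{higher} for the order case), and both read off $\add U=\add D_d\Lambda$ from the uniqueness of the injective cogenerator of $\CM\Lambda$.
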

\begin{proof}
(1)
Suppose that $U$ satisfies (C1), (C2) and (C$3'$). Then the same proof as in \cite[Theorem 5.4]{applications} implies that (C3) holds (see \cite[Proposition 3.2.2]{higher} for the order case).

Conversely, suppose that $U \in \CM\Lambda$ satisfies (C1)-(C3). First we show $^\perp U \subset \CM \Lambda$. Let $X$ be in $^\perp U$. Then by (C3), we have an exact sequence $0 \to X \to U^0 \to U^1 \to \cdots \to U^{d-1}$ with $U^i$ in $\add U$ for each $i$. Since each $U^i$ is maximal Cohen-Macaulay as an $R$-module, it follows from the depth lemma \cite[Proposition 1.2.9]{bh} that $X$ is in $\CM \Lambda$.

Next we will see that (C$3'$) holds. Put $\XX :={}^\perp U$ and denote by $\widehat{U}$ the subcategory of $\mod \Lambda$ consisting of modules $Y$ such that there exists an exact sequence
\[
0 \to U_n \to \cdots \to U_1 \to U_0 \to Y \to 0
\]
in $\mod\Lambda$ for some $n$ with $U_i \in \add U$ for each $i$. Then the Auslander-Buchweitz theory implies that $Y \in \widehat{U}$ if $\Ext_\Lambda^{>0}(\XX,Y)=0$ (see \cite[Proposition 3.6]{mcm} or \cite[Corollary A.3]{en} for the detail). Since $\XX \subset \CM \Lambda$ and $\Ext_\Lambda^{>0}(\CM \Lambda,D_d \Lambda) = 0$, we obtain $D_d \Lambda \in \widehat{U}$, which implies (C$3'$).

It follows from Lemma \ref{wellknown}(2) that $\id U_\Lambda \leq d$ if and only if $U$ is an injective object in $\CM\Lambda$. Thus the remaining assertion easily follows from the definition and (C$3'$).

(2)
If we have $^\perp U \subset \CM\Lambda$, then in particular $\Lambda$ and $U$ are in $\CM\Lambda$, which in particular implies that  $\Lambda$ is an $R$-order. Thus the ``only if'' part follows. The the  ``if'' part has already shown in the proof of (1).
\end{proof}

A noetherian ring $\Lambda$ is called \emph{Iwanaga-Gorenstein} if both $\id(\Lambda_\Lambda)$ and $\id({}_\Lambda \Lambda)$ are finite. By \cite[Lemma A]{za}, we have $\id (\Lambda_\Lambda) = \id ({}_\Lambda \Lambda)$ in this case.
For an Iwanaga-Gorenstein ring $\Lambda$, a $\Lambda$-module $X \in \mod\Lambda$ is called \emph{Gorenstein-projective} if $X$ is in $\GP \Lambda := {}^\perp \Lambda$. Then $\GP \Lambda$ is a Frobenius exact category with a projective generator $\Lambda$. An Iwanaga-Gorenstein ring $\Lambda$ is \emph{GP-finite} if $\GP \Lambda$ is of finite type. These concepts are special cases of cotilting modules, as the following proposition shows. If $\Lambda$ is an $R$-order, then this can be easily proved by using Proposition \ref{ordercase}.

\begin{proposition}\label{gorencotilt}
Let $\Lambda$ be a noetherian ring.
\begin{enumerate}
\item $\Lambda$ is Iwanaga-Gorenstein if and only if $\Lambda_\Lambda$ is a cotilting $\Lambda$-module.
\item Let $U \in \mod\Lambda$ be a cotilting $\Lambda$-module. Then $^\perp U$ is Frobenius if and only if $\Lambda$ is Iwanaga-Gorenstein and $\add U = \proj \Lambda$ holds.
\end{enumerate}
\end{proposition}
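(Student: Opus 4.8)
The plan is to prove (1) first and to deduce (2) from it formally. For (2) I will use Proposition \ref{cotiltingexact}, which says that $^\perp U$ is always an exact category with projective generator $\Lambda$ and injective cogenerator $U$, together with the elementary fact that in an exact category having a projective generator $P$ (resp.\ an injective cogenerator $I$) the projective (resp.\ injective) objects are exactly $\add P$ (resp.\ $\add I$), since any projective object is a direct summand of a suitable $P^{\oplus m}$ via a split deflation. Thus if $^\perp U$ is Frobenius, comparing these two descriptions forces $\add\Lambda=\add U$, i.e.\ $\add U=\proj\Lambda$; then $\id U_\Lambda=\id(\Lambda_\Lambda)$ and $^\perp U={}^\perp\Lambda$, so $\Lambda_\Lambda$ inherits (C1)--(C3) from $U$ and part (1) gives that $\Lambda$ is Iwanaga-Gorenstein. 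Conversely, if $\Lambda$ is Iwanaga-Gorenstein and $\add U=\proj\Lambda$, then $\Lambda_\Lambda$ is a cotilting module by (1), so by Proposition \ref{cotiltingexact} the category $^\perp U={}^\perp\Lambda$ has $\Lambda$ both as a projective generator and as an injective cogenerator; hence its projectives and its injectives both coincide with $\add\Lambda$, so $^\perp U$ is Frobenius.

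For the sufficiency in (1), suppose $\Lambda$ is Iwanaga-Gorenstein, so that $n:=\id(\Lambda_\Lambda)=\id({}_\Lambda\Lambda)$ is finite. Conditions (C1) and (C2) are immediate, the latter since $\Lambda_\Lambda$ is projective. The content is (C3). Let $X\in{}^\perp\Lambda=\GP\Lambda$; over an Iwanaga-Gorenstein ring such an $X$ is totally reflexive, so $X\iso X^{**}$ and $X^*:=\Hom_\Lambda(X,\Lambda)$ lies in $^\perp(\Lambda^{\op})$. Taking a projective cover $Q\twoheadrightarrow X^*$ over $\Lambda^{\op}$ and putting $P:=\Hom_{\Lambda^{\op}}(Q,\Lambda)$ (a finitely generated projective right module), an application of $\Hom_{\Lambda^{\op}}(-,\Lambda)$ to $Q\twoheadrightarrow X^*$ together with $X\iso X^{**}$ yields an exact sequence $0\to X\to P\to C\to 0$; a short computation with the long exact $\Ext$-sequence, using $\Ext^{>0}_\Lambda(X,\Lambda)=0=\Ext^{>0}_\Lambda(P,\Lambda)$ and the surjectivity of $Q\to X^*$, shows $\Ext^{>0}_\Lambda(C,\Lambda)=0$, i.e.\ $C\in\GP\Lambda$. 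This is exactly (C3). (Alternatively, one may simply cite the well-known fact that $\GP\Lambda$ is a Frobenius category over an Iwanaga-Gorenstein ring.)

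The necessity in (1) is the substantive direction. Assume $\Lambda_\Lambda$ is cotilting; by (C1), $n:=\id(\Lambda_\Lambda)<\infty$, and I must show $\id({}_\Lambda\Lambda)$ is finite. The key point is that (C3) forces every object of $^\perp\Lambda$ to admit a complete resolution: iterating (C3) produces, for $X\in{}^\perp\Lambda$, a coresolution $0\to X\to P^0\to P^1\to\cdots$ by finitely generated projectives with all cosyzygies again in $^\perp\Lambda$, and splicing this with an ordinary projective resolution of $X$ (legitimate because $^\perp\Lambda$ is closed under syzygies) gives a totally acyclic complex of finitely generated projectives having $X$ as a cocycle. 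Hence $^\perp\Lambda$ is precisely the category of Gorenstein-projective right $\Lambda$-modules, and $\Hom_\Lambda(-,\Lambda)$ restricts to an exact duality between it and the Gorenstein-projective left modules. Since moreover the $n$-th syzygy of any finitely generated right $\Lambda$-module lies in $^\perp\Lambda$ (as $\Ext^{>n}_\Lambda(-,\Lambda)=0$), every finitely generated right $\Lambda$-module has Gorenstein-projective dimension at most $n$. From this I would conclude that $\Lambda$ is Iwanaga-Gorenstein: concretely, one shows $\id({}_\Lambda\Lambda)\le n$ by a transpose computation --- using the Auslander--Bridger transpose to relate $\Ext^i_{\Lambda^{\op}}(N,\Lambda)$ for a finitely generated left module $N$ to higher $\Ext$-groups of right modules whose $n$-th syzygies are Gorenstein-projective (hence reflexive with vanishing positive $\Ext$ into $\Lambda$), and reading off the vanishing for $i>n$ --- or, more briefly, by invoking the known characterization of Iwanaga-Gorenstein rings as the noetherian rings of finite one-sided Gorenstein global dimension.

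I expect this last step to be the main obstacle. It is exactly here that hypothesis (C3) --- a condition about right modules --- must be converted into a bound on the injective dimension of $\Lambda$ as a \emph{left} module, which is where the two-sidedness in the definition of Iwanaga-Gorenstein genuinely enters; the transfer from right to left (whether through the Auslander--Bridger transpose exact sequences, through a bidualizing spectral sequence argument, or by citing an input from Gorenstein homological algebra) is the delicate point, while everything else in the proof is either formal or standard.
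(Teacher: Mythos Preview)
Your proposal is correct and follows essentially the same route as the paper. The paper's proof is terser: for the ``if'' direction of (1) it cites \cite{hh} for the fact that finite Gorenstein-projective dimension of all finitely generated modules implies Iwanaga--Gorensteinness, and \cite[Proposition 4.2]{en} for the fact that this holds when $\Lambda_\Lambda$ is cotilting; your argument unpacks these citations (showing directly that (C3) makes every object of $^\perp\Lambda$ totally reflexive, hence every $\Omega^n M$ is Gorenstein-projective) and correctly identifies the left--right transfer as the only nontrivial step, which is exactly the content of the cited result of Huang--Huang. Part (2) in the paper is dispatched in one line from (1) and Proposition~\ref{cotiltingexact}, which is what you do as well.
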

\begin{proof}
(1)
First we show the ``only if'' part. Suppose that $\Lambda$ is Iwanaga-Gorenstein. Then $\Lambda_\Lambda$ clearly satisfies (C1) and (C2). Moreover (C3) follows from the fact that $^\perp \Lambda = \GP \Lambda$ is the Frobenius category with an injective cogenerator $\Lambda$.

To show the ``if'' part, we use the main result of \cite{hh}. We refer the reader to \cite{hh} and references therein for the unexplained concepts. In \cite{hh}, it was shown that $\Lambda$ is Iwanaga-Gorenstein if every module in $\mod\Lambda$ has a finite Gorenstein-projective dimension. Thus it suffices to see that this is the case if $\Lambda_\Lambda$ is cotilting, which follows from \cite[Proposition 4.2]{en}.

(2)
Clear from (1) and Proposition \ref{cotiltingexact}.
\end{proof}

In the paper \cite{en}, it was characterized when a given exact category is exact equivalent to the exact category of the form $^\perp U$ for a cotilting module $U$ over a noetherian ring. For an integer $n\geq 1$, we say that $\EE$ \emph{has $n$-kernels} if for every morphism $f:X\to Y$ in $\EE$, there exists a complex
 \[
 0 \to X_n \xrightarrow{f_n} \cdots \xrightarrow{f_2} X_1 \xrightarrow{f_1} X \xrightarrow{f} Y
 \]
  in $\EE$ such that the following diagram is exact.
  \[
 0 \to \EE(-,X_n) \xrightarrow{\EE(-,f_n)}  \cdots \xrightarrow{\EE(-,f_2)}  \EE(-,X_1) \xrightarrow{\EE(-,f_1)} \EE(-,X) \xrightarrow{\EE(-,f)}  \EE(-,Y)
  \]

\begin{proposition}[{\cite[Corollary 4.12]{en}}]\label{eno}
Suppose that $\EE:= (\proj \Gamma,F)$ is an exact category for a noetherian ring $\Gamma$ and $n \geq 2$ is an integer.
\begin{enumerate}
\item The following are equivalent.
\begin{enumerate}
\item There exist a noetherian ring $\Lambda$ and a cotilting $\Lambda$-module $U$ with $\id U_\Lambda \leq n$ such that $^\perp U$ is exact equivalent to $\EE$.
\item $\EE$ has projective generators and injective cogenerators, and $\gl \Gamma \leq n$ holds.
\item $\EE$ has projective generators, injective cogenerators, and $(n-1)$-kernels.
\end{enumerate}

\item
Suppose that the condition in \upshape{(1)} holds. Take a projective generator $P$ and an injective cogenerator $I$, and put $\Lambda := \End_\EE(P)$ and $U := \EE(P,I) \in \mod\Lambda$. Then $\Lambda$ and $U$ satisfies \upshape{(1)} and $\EE(P,-)$ gives an exact equivalence $\EE \equi {}^\perp U$.
\end{enumerate}
\end{proposition}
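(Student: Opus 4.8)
The plan is to run the cycle $(a)\Rightarrow(b)\Rightarrow(c)\Rightarrow(a)$ in part $(1)$, where the closing arrow is supplied by the explicit construction asserted in part $(2)$. I would first dispose of $(b)\Leftrightarrow(c)$, which is purely formal, and then concentrate on the two implications involving $(a)$, which carry the real content.

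\emph{The equivalence $(b)\Leftrightarrow(c)$.} Both conditions include the clause that $\EE$ have projective generators and injective cogenerators, so only ``$\gl\Gamma\le n$'' $\Leftrightarrow$ ``$\EE$ has $(n-1)$-kernels'' needs proof. Since $\EE=\proj\Gamma$ is idempotent complete, the Yoneda embedding identifies $\EE$ with the finitely generated projective objects of the abelian category $\mod\Gamma\simeq\mod_1\EE$, and f.g. projective $\EE$-modules are exactly the representables. For a morphism $f\colon X\to Y$, a witnessing complex $0\to X_{n-1}\to\cdots\to X_1\to X\to Y$ becomes, after applying $\EE(-,-)$ and passing to $M:=\coker\EE(-,f)$, a projective resolution
\[
0\to\EE(-,X_{n-1})\to\cdots\to\EE(-,X)\to\EE(-,Y)\to M\to0
\]
of length $n$ in $\mod_1\EE$. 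Conversely, since $\Gamma$ is noetherian, $\mod\Gamma$ is abelian, every object of $\mod_1\EE$ is a cokernel of a map of representables, syzygies of f.g.\ modules are f.g., and f.g.\ projectives are representable; so $\gl\Gamma\le n$ yields, for any $f$, exactly such an $(n-1)$-kernel complex by splicing projective resolutions. Hence $(b)$ and $(c)$ coincide.

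\emph{From $(c)$ to $(a)$ --- the construction of part $(2)$.} Fix a projective generator $P$ and an injective cogenerator $I$, and put $\Lambda:=\End_\EE(P)$, $U:=\EE(P,I)$. First, $\Lambda$ is noetherian: with $W$ an additive generator containing $P$ and $\Gamma=\End_\EE(W)$, one has $\Lambda\cong e\Gamma e$ for an idempotent $e$, and a corner ring of a noetherian ring is noetherian (the map $\mathfrak{a}\mapsto\mathfrak{a}\Gamma$ on right ideals is injective and order-preserving). The functor $\EE(P,-)\colon\EE\to\mod\Lambda$ is exact ($P$ projective), faithful ($P$ generates), fully faithful (projective presentations by $\add P$), and takes $\add P$-resolutions in $\EE$ to projective $\Lambda$-resolutions, hence induces isomorphisms $\Ext^i_\EE(X,Y)\cong\Ext^i_\Lambda(\EE(P,X),\EE(P,Y))$. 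Writing $\CC$ for its essential image: $\Lambda,U\in\CC$, $\CC$ is extension-closed, and $\Ext^{>0}_\Lambda(U,U)\cong\Ext^{>0}_\EE(I,I)=0$ gives (C2). The substantive points --- $\id U_\Lambda\le n$ (hence (C1)), the enough-injectives axiom (C3) for objects of $\CC$, and the identification $\CC={}^\perp U$ --- are where the bound $\gl\Gamma\le n$ and Auslander--Buchweitz approximation theory (cf.\ \cite{mcm,en}) must enter; granting these, $\EE(P,-)$ induces an exact equivalence $\EE\simeq{}^\perp U$ with $\Lambda$ a projective generator and $U$ an injective cogenerator, which is $(a)$ together with $(2)$.

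\emph{From $(a)$ to $(b)$, and the main obstacle.} If $\EE\simeq{}^\perp U$ for a cotilting $\Lambda$-module $U$ with $\id U_\Lambda\le n$, then Proposition \ref{cotiltingexact} shows $^\perp U$ has a projective generator $\Lambda$ and an injective cogenerator $U$, so $\EE$ has both. For $\gl\Gamma\le n$ --- equivalently, by the first step, that $^\perp U$ has $(n-1)$-kernels --- one again invokes the cotilting axioms: using the Auslander--Buchweitz structure attached to $U$ (right $^\perp U$-approximations and the complement category $\widehat{U}$), the inequality $\id U_\Lambda\le n$ forces the finitely presented functor category on $^\perp U$ to have global dimension $\le n$ (here the hypothesis $n\ge 2$ is needed), and transporting the resulting resolutions through the Yoneda embedding yields the $(n-1)$-kernels. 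I expect this homological estimate, together with the dual assertions in the second step ($\id U_\Lambda\le n$ and $\CC={}^\perp U$), to be the main obstacle; by contrast the first step and the formal properties of $\EE(P,-)$ are routine.
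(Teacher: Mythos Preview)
The paper does not actually prove this proposition: it is imported from \cite[Corollary~4.12]{en}, and the paper's ``proof'' consists solely of the remark that the endomorphism ring of every object in $\EE=\proj\Gamma$ is noetherian (citing \cite[Proposition~2.3]{sand}), so that \cite[Theorem~4.11]{en} applies. Everything else is deferred to \cite{en}.

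Your proposal, by contrast, sketches how one would prove the cited result itself. The architecture you give --- $(b)\Leftrightarrow(c)$ as a formal translation between $(n-1)$-kernels and $\gl\Gamma\le n$, then $(c)\Rightarrow(a)$ via the explicit construction of part~(2), and $(a)\Rightarrow(b)$ via cotilting/Auslander--Buchweitz theory --- is exactly the shape of the argument in \cite{en}. Your corner-ring argument for the noetherianness of $\Lambda=e\Gamma e$ is correct (the inverse to $\mathfrak a\mapsto\mathfrak a\Gamma$ is $I\mapsto eIe$, using $a\gamma e=a(e\gamma e)\in\mathfrak a$ for $a\in\mathfrak a\subset e\Gamma e$); this is the one substantive point the paper bothers to address, via the citation to Sandomierski.

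That said, you yourself flag the genuine content --- the bound $\id U_\Lambda\le n$, the identification $\CC={}^\perp U$, and the global-dimension estimate in $(a)\Rightarrow(b)$ --- as unproved (``granting these'', ``the main obstacle''). So what you have is a correct and well-organized \emph{plan}, not a proof; but since the paper's own treatment is a bare citation, your plan already says strictly more than the paper does, and nothing in it is wrong.
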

\begin{proof}
Note that the endomorphism ring of every object in $\EE$ is noetherian, see e.g. \cite[Proposition 2.3]{sand}. Thus \cite[Theorem 4.11]{en} applies.
\end{proof}

\subsection{Classifications for noetherian $R$-algebras}
To state our classifications, it is convenient to introduce the following terminology.
\begin{definition}
Let $R$ be a noetherian complete local ring.
\begin{enumerate}
\item
We say that a pair $(\Lambda,U)$ is an \emph{$n$-cotilting pair} if $\Lambda$ is a noetherian $R$-algebra and $U \in \mod\Lambda$ is a cotilting $\Lambda$-module with $\id U_\Lambda \leq n$.
Cotilting pairs $(\Lambda_1,U_1)$ and $(\Lambda_2,U_2)$ are said to be \emph{equivalent} if there exists an equivalence $\mod \Lambda_1 \to \mod \Lambda_2$ which induces an equivalence $\add U_1 \to \add U_2$.
\item
We say that a pair $(\Gamma, \XXX)$ is an \emph{algebra with dotted arrows} if $\Gamma$ is a noetherian $R$-algebra and $\XXX$ is a set of dotted arrows of $Q(\Gamma)$ (see Definition \ref{const}). Two such pairs $(\Gamma_1,\XXX_1)$ and $(\Gamma_2,\XXX_2)$ are said to be \emph{equivalent} if there exists an equivalence $\proj \Gamma_1 \to \proj\Gamma_2$ such that $\XXX_1$ corresponds to $\XXX_2$ under the isomorphism $Q(\Gamma_1) \to Q(\Gamma_2)$. For an algebra with dotted arrows $(\Gamma, \XXX)$, let $P_\XXX$ be the direct sum of indecomposable projective $\Gamma$-modules which are not sources of dotted arrows in $\XXX$. We fix an idempotent $e_\XXX \in\Gamma$ satisfying $e_\XXX \Gamma = P_\XXX$.
\end{enumerate}
\end{definition}
The following main theorem classifies an $n$-cotilting pair of finite type by algebras with finite global dimension and set of dotted arrows.
\begin{theorem}\label{main1}
Let $R$ be a noetherian complete local ring. There exists a bijection between the following for $n \geq 2$.
\begin{enumerate}
\item Equivalence classes of $n$-cotilting pairs $(\Lambda,U)$ such that $^\perp U$ is of finite type.
\item Equivalence classes of algebras with dotted arrows $(\Gamma, \XXX)$ such that $\gl \Gamma \leq n$ and $\Gamma/\Gamma e_\XXX \Gamma$ is of finite length over $R$.
\item Exact equivalence classes of Hom-noetherian idempotent complete exact $R$-category $\EE$ of finite type such that $\EE$ has enough projectives, enough injectives and $(n-1)$-kernels.
\end{enumerate}
\end{theorem}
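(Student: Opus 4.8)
The plan is to use the class (3) of exact categories as a pivot and to establish two bijections, $(1)\leftrightarrow(3)$ and $(2)\leftrightarrow(3)$, whose composite gives $(1)\leftrightarrow(2)$. Class (3) is the natural hub, because both Proposition \ref{eno} (recovering a cotilting pair from an exact category) and Corollary \ref{dotted} (converting an admissible exact structure into a set of dotted arrows) apply directly to objects of type (3).

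\textbf{From (1) to (3) and back.} Starting from an $n$-cotilting pair $(\Lambda,U)$ with $\EE:={}^\perp U$ of finite type, Proposition \ref{cotiltingexact} shows that $\EE$ is an exact category with projective generator $\Lambda$ and injective cogenerator $U$; since $\Lambda$ is a noetherian $R$-algebra, $\EE\subseteq\mod\Lambda$ is Hom-noetherian and idempotent complete, and it is of finite type by hypothesis, so $\EE\equi\proj\Gamma$ with $\Gamma:=\End_\EE M$ for an additive generator $M$. Condition (1)(a) of Proposition \ref{eno} then holds for $\EE$, witnessed by $(\Lambda,U)$ itself, hence so does condition (1)(c), which supplies the one missing requirement, namely that $\EE$ has $(n-1)$-kernels; thus $\EE$ lies in class (3). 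Conversely, for $\EE$ in (3) we have $\EE\equi\proj\Gamma$ with $\Gamma$ a noetherian $R$-algebra (a finitely generated $R$-module), and condition (1)(c) of Proposition \ref{eno} holds, so Proposition \ref{eno}(2) produces, from a projective generator $P$ and an injective cogenerator $I$, an $n$-cotilting pair $\Lambda:=\End_\EE P$, $U:=\EE(P,I)$ with ${}^\perp U\equi\EE$ of finite type. I would then check that these assignments descend to the stated equivalence classes: an equivalence $\mod\Lambda_1\to\mod\Lambda_2$ carrying $\add U_1$ to $\add U_2$ restricts to an exact equivalence ${}^\perp U_1\equi{}^\perp U_2$, and conversely an exact equivalence of such perpendicular categories carries projective generators to projective generators and injective cogenerators to injective cogenerators, so, after passing to the endomorphism rings of additive generators, it matches the pairs $(\Lambda_i,U_i)$ up to the required notion of equivalence.

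\textbf{From (2) to (3) and back.} Given $(\Gamma,\XXX)$ with $\gl\Gamma\leq n$ and $\Gamma/\Gamma e_\XXX\Gamma$ of finite length over $R$, Corollary \ref{dotted} turns $\XXX$ into an admissible exact structure $F$ on $\proj\Gamma$, and $\EE:=(\proj\Gamma,F)$ is Hom-noetherian, idempotent complete, of finite type, and admissible; moreover Proposition \ref{proj} identifies $P_\XXX=e_\XXX\Gamma$ as an additive generator of the projective objects of $\EE$. Hence Proposition \ref{enough}(2) yields enough projectives, Corollary \ref{projinj}(2) yields enough injectives, and the implication from condition (1)(b) to condition (1)(c) in Proposition \ref{eno} yields $(n-1)$-kernels, so $\EE$ lies in class (3). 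Conversely, for $\EE$ in (3): having enough projectives, $\EE$ is Ext-noetherian by Proposition \ref{admissible}(1), hence Ext-finite by Proposition \ref{admissible}(2) (using finite type), hence admissible by Proposition \ref{admissible}(3); Corollary \ref{dotted} then yields a set $\XXX$ of dotted arrows in $Q(\Gamma)$, the bound $\gl\Gamma\leq n$ follows from the implication from condition (1)(c) to condition (1)(b) in Proposition \ref{eno}, and finite length of $\Gamma/\Gamma e_\XXX\Gamma$ follows from Proposition \ref{enough}(2) since $\EE$ has enough projectives. Here again both assignments respect the stated equivalences, since an additive generator of $\EE$ --- hence $\Gamma$ up to Morita equivalence, together with its quiver and the chosen dotted arrows --- is intrinsic to the exact category $\EE$ by Corollary \ref{dotted}.

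\textbf{The main obstacle.} The two substantive inputs, Proposition \ref{eno} and Corollary \ref{dotted}, are already available, so the remaining work is largely organizational: keeping track of the three distinct notions of equivalence and checking that the web of implications among Propositions \ref{admissible}, \ref{enough}, \ref{eno} and Corollaries \ref{dotted}, \ref{projinj} is invoked with all hypotheses satisfied (Krull-Schmidt/semiperfectness of the algebras involved, completeness of $R$, finite type). The point requiring genuine care --- and what I expect to be the main obstacle --- is avoiding circularity among the properties ``admissible'', ``enough projectives'' and ``enough injectives'': admissibility is obtained from enough projectives only by routing through ``Ext-noetherian'' and ``Ext-finite'', whereas ``enough injectives'' is obtained from ``enough projectives'' only via Corollary \ref{projinj}(2), which itself presupposes admissibility, so these facts must be deduced in the correct order.
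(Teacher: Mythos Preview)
Your proposal is correct and follows essentially the same route as the paper: using Proposition \ref{eno} for $(1)\leftrightarrow(3)$ and the combination of Corollary \ref{dotted}, Proposition \ref{admissible}, Proposition \ref{enough}, and Corollary \ref{projinj} for $(2)\leftrightarrow(3)$. The paper introduces an auxiliary class (4) of pairs $(\proj\Gamma,F)$ with $\gl\Gamma\leq n$ and enough projectives and injectives, but this is just a rephrasing of (3) via Proposition \ref{eno}, and your handling of the admissibility chain (enough projectives $\Rightarrow$ Ext-noetherian $\Rightarrow$ Ext-finite $\Rightarrow$ admissible) is exactly what underlies the paper's terse invocation of Proposition \ref{admissible}.
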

\begin{proof}
By Corollary \ref{dotted}, we have the bijection between (2) and the following.
\begin{enumerate}
\item[(4)]
Exact equivalence classes of $(\proj \Gamma, F)$, where $\Gamma$ is a noetherian $R$-algebra with $\gl \Gamma \leq n$ and $F$ is an exact structure on $\proj\Gamma$ with enough projectives and injectives.
\end{enumerate}
In fact, Proposition \ref{admissible} shows that $F$ in (4) is admissible, and Proposition \ref{enough} and Corollary \ref{projinj} implies that $F$ has enough projectives and injectives if and only if $\Gamma / \Gamma e_\XXX \Gamma$ is of finite length.

By Proposition \ref{eno}, we have the bijections between (2), (3) and (4). Finally, we have maps between (1) and (3) as follows.
For an $n$-cotilting pair $(\Lambda, U)$ in (1), we put $\EE := {}^\perp U$.
For an exact category $\EE$ in (3), there exists an $n$-cotilting pair $(\Lambda,U)$ such that $^\perp U$ is exact equivalent to $\EE$ by Proposition \ref{eno}, which satisfies the condition of (1).
The straightforward argument shows that these maps are mutually inverse to each other.
\end{proof}

Next we shall apply Theorem \ref{main1} to GP-finite Iwanaga-Gorenstein algebras. For a translation quiver $Q$, we consider a \emph{$\tau$-orbit} (that is, a connected component of the graph $Q'$ consisting of the same vertices as $Q$ and all the dotted translation arrows of $Q$). We say that a $\tau$-orbit is \emph{stable} if every vertex on it is both a source and a target of dotted arrows.
\begin{corollary}\label{main2}
Let $R$ be a noetherian complete local ring. There exists a bijection between the following for $n \geq 2$.
\begin{enumerate}
\item Morita equivalence classes of GP-finite Iwanaga-Gorenstein noetherian $R$-algebras $\Lambda$ with $\id \Lambda \leq n$.
\item Equivalence classes of algebras with dotted arrows $(\Gamma,\XXX)$ satisfying the following.
\begin{enumerate}
\item $\gl \Gamma \leq n$.
\item $\Gamma/\Gamma e_\XXX \Gamma$ is of finite length over $R$.
\item $\XXX$ is a union of stable $\tau$-orbits in $Q(\Gamma)$.
\end{enumerate}
\end{enumerate}
\end{corollary}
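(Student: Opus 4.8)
The plan is to realise classes~(1) and~(2) as the subclasses, cut out by a \emph{Frobenius} condition, of the classes already put in bijection by Theorem~\ref{main1}, and then to translate that Frobenius condition into the combinatorial requirement on $\XXX$. On the representation-theoretic side I would first use Proposition~\ref{gorencotilt}: an $R$-algebra $\Lambda$ is Iwanaga--Gorenstein with $\id\Lambda_\Lambda\le n$ exactly when $(\Lambda,\Lambda_\Lambda)$ is an $n$-cotilting pair, in which case $^\perp\Lambda=\GP\Lambda$ is Frobenius by Proposition~\ref{cotiltingexact} and is of finite type precisely when $\Lambda$ is GP-finite. Conversely, if $(\Lambda,U)$ is an $n$-cotilting pair with $^\perp U$ Frobenius, then Proposition~\ref{gorencotilt}(2) gives that $\Lambda$ is Iwanaga--Gorenstein and $\add U=\proj\Lambda$, so $\Lambda':=\End_\Lambda(U)$ is Morita equivalent to $\Lambda$ (hence again Iwanaga--Gorenstein), the induced equivalence $\mod\Lambda\to\mod\Lambda'$ carries $^\perp U$ onto $\GP\Lambda'$, and $\id\Lambda'_{\Lambda'}=\id U_\Lambda\le n$; moreover $(\Lambda,U)$ and $(\Lambda',\Lambda'_{\Lambda'})$ are equivalent as cotilting pairs. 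Since every equivalence $\mod\Lambda_1\to\mod\Lambda_2$ automatically matches projectives with projectives, equivalence of pairs of the form $(\Lambda_i,(\Lambda_i)_{\Lambda_i})$ is nothing but Morita equivalence of the $\Lambda_i$. This shows that class~(1) is in bijection with the equivalence classes of $n$-cotilting pairs $(\Lambda,U)$ with $^\perp U$ \emph{of finite type and Frobenius}, and hence, via Theorem~\ref{main1}, with those pairs $(\Gamma,\XXX)$ of Theorem~\ref{main1}(2) for which the exact category $\EE:=(\proj\Gamma,F)$ is Frobenius.

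The heart of the matter is then the claim that, for $(\Gamma,\XXX)$ as in Theorem~\ref{main1}(2), $\EE=(\proj\Gamma,F)$ is Frobenius if and only if $\XXX$ is a union of stable $\tau$-orbits. Here $F$ corresponds via Theorem~\ref{2regularcorresp} to $\SS=\{P/\rad P\mid P\text{ a source of a dotted arrow of }\XXX\}$. By Proposition~\ref{proj} together with Proposition~\ref{enough}(1), the indecomposable projective objects of $\EE$ are exactly the indecomposable projective $\Gamma$-modules which are \emph{not} sources of dotted arrows of $\XXX$. Dually, $\Hom_\Gamma(-,\Gamma)$ is an exact duality from $(\proj\Gamma,F)$ onto $(\proj\Gamma^{\op},F^{\op})$, where $F^{\op}$ corresponds to $\Ext^2_\Gamma(\DD,\Gamma)=\Filt\{\Ext^2_\Gamma(S,\Gamma)\mid S\in\SS\}$; applying Proposition~\ref{enough}(1) on the opposite side and using that, by Definition~\ref{const}, $\Hom_\Gamma(\tau P,\Gamma)$ is the projective cover of $\Ext^2_\Gamma(P/\rad P,\Gamma)$, one finds that the indecomposable injective objects of $\EE$ are exactly the indecomposable projective $\Gamma$-modules which are \emph{not} targets of dotted arrows of $\XXX$. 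As $\EE$ has enough projectives and enough injectives (this is part of the content of Theorem~\ref{main1}(2), via Proposition~\ref{enough}(2) and Corollary~\ref{projinj}), it is Frobenius if and only if these two classes coincide, i.e.\ if and only if the set of sources of dotted arrows of $\XXX$ equals the set of their targets.

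It then remains to close the argument combinatorially. The partial translation $\tau$ on the vertices of $Q(\Gamma)$ is injective: if $\tau P\cong\tau P'$ then the simple modules $\Ext^2_\Gamma(P/\rad P,\Gamma)$ and $\Ext^2_\Gamma(P'/\rad P',\Gamma)$ share a projective cover, hence agree, and the duality of Lemma~\ref{e2} yields $P/\rad P\cong P'/\rad P'$. Thus in the subquiver of dotted arrows every vertex is the source of at most one and the target of at most one arrow, so each $\tau$-orbit is a finite cycle, a one- or two-sided infinite string, a finite non-closed string, or an isolated vertex, and it is stable precisely when it is a cycle or a two-sided infinite string. If $\XXX$ is a union of stable $\tau$-orbits, then in each of them every vertex is both a source and a target of an arrow of $\XXX$, so the two vertex-sets agree. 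Conversely, if they agree, then starting from any arrow of $\XXX$ and alternately invoking ``every source of $\XXX$ is a target of $\XXX$'' and ``every target of $\XXX$ is a source of $\XXX$'' propagates along the entire $\tau$-string through that arrow: all its arrows lie in $\XXX$, $\tau$ and $\tau^{-1}$ are everywhere defined along it, and every vertex on it is both a source and a target; hence that string is a stable $\tau$-orbit whose arrows lie in $\XXX$, and $\XXX$ is the union of the stable $\tau$-orbits obtained this way.

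Assembling the three bijections (class~(1) $\leftrightarrow$ Frobenius $n$-cotilting pairs $\leftrightarrow$ Frobenius $(\proj\Gamma,F)$ $\leftrightarrow$ pairs $(\Gamma,\XXX)$ with $\XXX$ a union of stable $\tau$-orbits), together with the conditions $\gl\Gamma\le n$ and ``$\Gamma/\Gamma e_\XXX\Gamma$ of finite length over $R$'' carried over unchanged from Theorem~\ref{main1}, yields the asserted bijection, and the formula for the corresponding $\Lambda$ is read off from Theorem~\ref{main1} as $\End_\Gamma(P_\XXX)=e_\XXX\Gamma e_\XXX$. I expect the main obstacle to lie in the bookkeeping of the middle step --- in particular the accurate identification of the injective objects of $(\proj\Gamma,F)$ via the $\Hom_\Gamma(-,\Gamma)$-duality --- and in verifying that the combinatorial reformulation ``sources $=$ targets'' matches exactly the definition of stable $\tau$-orbits used just before the statement.
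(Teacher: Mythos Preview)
Your proposal is correct and follows essentially the same approach as the paper's proof: reduce via Proposition~\ref{gorencotilt} to showing that $(\proj\Gamma,F)$ is Frobenius if and only if $\XXX$ is a union of stable $\tau$-orbits, identify the indecomposable non-projectives (resp.\ non-injectives) as the sources (resp.\ targets) of dotted arrows in $\XXX$, and conclude combinatorially. The paper dispatches each of these steps in a single sentence, whereas you spell out the bookkeeping (the equivalence between Morita classes and Frobenius cotilting pairs, the duality argument for injectives, the injectivity of $\tau$, and the propagation argument along $\tau$-orbits); but the skeleton is identical.
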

\begin{proof}
By Proposition \ref{gorencotilt}, it suffices to show that the pair $(\Gamma, F)$ in Theorem \ref{main1}(2) gives the Frobenius exact structure on $\proj\Gamma$ if and only if $\XXX$ in Theorem \ref{main1}(2) is a union of stable $\tau$-orbits. Since the exact category $\proj \Gamma$ has enough projectives and injectives by Proposition \ref{enough}(2) and Corollary \ref{projinj}, the exact structure $F$ is Frobenius if and only if the class of projectives and that of injectives coincide. For a set of dotted arrows $\XXX$, an indecomposable object $M$ in $\proj \Gamma$ is not projective (resp. not injective) in $F$ if and only if there exists a dotted arrow in $\XXX$ starting at (resp. ending at) $M$. Therefore the class of indecomposable non-projective objects and that of indecomposable non-injective objects coincide if and only if $\XXX$ is a union of some stable $\tau$-orbits, which clearly implies the assertion.
\end{proof}

We refer the reader to Example \ref{example1} in Section 1 for the detailed example of this classification.

\subsection{Classifications for $R$-orders}
Restricting Theorem \ref{main1} to the case of $R$-orders, we obtain the corresponding result as follows.
\begin{corollary}\label{ordercor}
Let $R$ be a complete Cohen-Macaulay local ring. Then there exists a bijection between the following for $n \geq 2$.
\begin{enumerate}
\item Equivalence classes of $n$-cotilting pairs $(\Lambda, U)$ such that $\Lambda$ is an $R$-order, $U$ is in $\CM\Lambda$ and $^\perp U$ is of finite type.
\item Equivalence classes of algebras with dotted arrows $(\Gamma, \XXX)$ satisfying the following.
\begin{enumerate}
\item $\gl \Gamma \leq n$.
\item $\Gamma / \Gamma e_\XXX \Gamma$ is of finite length over $R$.
\item $\Gamma e_\XXX$ is maximal Cohen-Macaulay as an $R$-module.
\end{enumerate}
\end{enumerate}
\end{corollary}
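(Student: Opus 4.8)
The plan is to deduce Corollary~\ref{ordercor} from Theorem~\ref{main1} by matching up the additional hypotheses appearing on the two sides. Theorem~\ref{main1} already yields a bijection between equivalence classes of $n$-cotilting pairs $(\Lambda,U)$ with ${}^\perp U$ of finite type and equivalence classes of algebras with dotted arrows $(\Gamma,\XXX)$ such that $\gl\Gamma\le n$ and $\Gamma/\Gamma e_\XXX\Gamma$ has finite length over $R$. Since conditions (a) and (b) of Corollary~\ref{ordercor}(2) coincide with those already imposed in Theorem~\ref{main1}(2), the task reduces to showing that, for corresponding pairs $(\Lambda,U)\leftrightarrow(\Gamma,\XXX)$, the two conditions ``$\Lambda$ is an $R$-order'' and ``$U\in\CM\Lambda$'' hold together if and only if $\Gamma e_\XXX$ is maximal Cohen-Macaulay over $R$.

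First I would reformulate the left-hand side intrinsically. By Proposition~\ref{ordercase}(2), ``$\Lambda$ is an $R$-order and $U\in\CM\Lambda$'' is equivalent to ${}^\perp U\subseteq\CM\Lambda$. Since ${}^\perp U$ is of finite type and $\CM\Lambda$ is closed under finite direct sums and direct summands in $\mod\Lambda$, the inclusion ${}^\perp U\subseteq\CM\Lambda$ holds if and only if some (equivalently, every) additive generator of ${}^\perp U$ lies in $\CM\Lambda$.

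The core step is then to recognise $\Gamma e_\XXX$ as such an additive generator, up to $R$-linear isomorphism. Let $N$ be the additive generator of $\EE:={}^\perp U$ used to build $\Gamma=\End_\EE(N)$, so $\EE(N,-)\colon\EE\xrightarrow{\ \sim\ }\proj\Gamma$, and let $P\in\EE$ be the projective generator of the exact structure. By the construction behind Theorem~\ref{main1}, via Corollary~\ref{dotted} and Proposition~\ref{enough}, the indecomposable projective objects of $\EE$ are exactly those not occurring as sources of dotted arrows, so $\EE(N,P)\cong P_\XXX=e_\XXX\Gamma$ and $\Lambda=\End_\EE(P)\cong e_\XXX\Gamma e_\XXX$. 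Transporting $N$ along the exact equivalence $\EE(P,-)\colon\EE\xrightarrow{\ \sim\ }{}^\perp U$ of Proposition~\ref{eno}(2) produces the $\Lambda$-module $\Hom_\EE(P,N)$, which is then an additive generator of ${}^\perp U$; on the other hand, applying $\EE(N,-)$ together with the standard isomorphism $\Hom_\Gamma(e_\XXX\Gamma,\Gamma)\cong\Gamma e_\XXX$ gives
\[
\Hom_\EE(P,N)\ \cong\ \Hom_\Gamma\bigl(\EE(N,P),\EE(N,N)\bigr)\ \cong\ \Hom_\Gamma(e_\XXX\Gamma,\Gamma)\ \cong\ \Gamma e_\XXX
\]
as $(\Gamma,\Lambda)$-bimodules, and in particular as $R$-modules. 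Stringing the equivalences together, $\Gamma e_\XXX$ is maximal Cohen-Macaulay over $R$ if and only if an additive generator of ${}^\perp U$ is, if and only if ${}^\perp U\subseteq\CM\Lambda$, if and only if $\Lambda$ is an $R$-order with $U\in\CM\Lambda$. This identifies the two sub-classes as claimed, and the bijection of Corollary~\ref{ordercor} is the restriction of that of Theorem~\ref{main1}.

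I expect the only delicate point to be routine bookkeeping rather than any new idea: one must check that all the identifications used ($\Lambda\cong e_\XXX\Gamma e_\XXX$, the Yoneda isomorphisms, and the passage through the abstract category $\EE$) are $R$-linear, so that ``maximal Cohen-Macaulay over $R$'' is genuinely transported, and that the properties ``$\Lambda$ is an $R$-order'', ``$U\in\CM\Lambda$'', and ``$\Gamma e_\XXX$ is maximal Cohen-Macaulay over $R$'' are invariant under the notions of equivalence used for cotilting pairs and for algebras with dotted arrows; both are immediate because every functor involved is $R$-linear. All the homological content is already contained in Proposition~\ref{ordercase}(2) and Theorem~\ref{main1}.
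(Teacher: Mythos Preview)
Your proposal is correct and follows essentially the same approach as the paper: both restrict the bijection of Theorem~\ref{main1} by using Proposition~\ref{ordercase}(2) to replace ``$\Lambda$ is an $R$-order and $U\in\CM\Lambda$'' with ``${}^\perp U\subseteq\CM\Lambda$'', and then identify $\Gamma e_\XXX$ as an additive generator of ${}^\perp U$ via $\Lambda\cong e_\XXX\Gamma e_\XXX$ and $\Hom_\Gamma(e_\XXX\Gamma,-)$. The paper's version is slightly more terse, writing directly ${}^\perp U=\add\Gamma e_\XXX$ rather than spelling out the Yoneda isomorphisms, but the content is the same.
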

\begin{proof}
We check that the bijections in Theorem \ref{main1} restricts to this case. By Proposition \ref{ordercase}(2), we may replace the equivalence class (1) with
\begin{enumerate}
\item[(1$'$)] Equivalence classes of $n$-cotilting pairs $(\Lambda, U)$ such that $^\perp U$ is of finite type and $^\perp U \subset \CM \Lambda$ holds.
\end{enumerate}

Suppose that $(\Lambda,U)$ and $(\Gamma, \XXX)$ correspond to each other under Theorem \ref{main1}. It suffices to show that $^\perp U \subset \CM \Lambda$ if and only if $\Gamma e_\XXX$ is maximal Cohen-Macaulay as an $R$-module.
Notice that $\add e_\XXX \Gamma$ is the category of all projective objects in the exact category $(\proj \Gamma,F)$, thus $e_\XXX \Gamma$ is a projective generator of it. Hence we may assume that $\Lambda = \End_\Gamma (e_\XXX \Gamma) = e_\XXX \Gamma e_\XXX$. In this situation, we have an embedding $\Hom_\Gamma(e_\XXX \Gamma, -) :\proj \Gamma \to \mod\Lambda$ whose essential image is $^\perp U$. It follows that $^\perp U = \add \Hom_\Gamma(e_\XXX \Gamma, \Gamma) = \add \Gamma e_\XXX$. Therefore $\Gamma e_\XXX$ is maximal Cohen-Macaulay as an $R$-module if and only if $^\perp U \subset \CM\Lambda$ holds.
\end{proof}

This gives the following Auslander-type correspondence for Cohen-Macaulay-finite $R$-orders with $\dim R \geq 2$. It largely improves \cite[Theorem 4.2.3]{higher} for the case $n=1$, because our result gives a bijection for Cohen-Macaulay-finite orders and some assumptions on $\Gamma$ in \cite[Theorem 4.2.3]{higher} are dropped.
\begin{corollary}\label{arorder}
Let $R$ be a complete Cohen-Macaulay local ring with $\dim R =d  \geq 2$. Then there exists a bijection between the following.
\begin{enumerate}
\item Morita equivalence classes of $R$-orders $\Lambda$ such that $\CM \Lambda$ is of finite type.
\item Equivalence classes of algebras with dotted arrows $(\Gamma, \XXX)$ satisfying the following.
\begin{enumerate}
\item $\gl \Gamma = d$.
\item $\Gamma / \Gamma e_\XXX \Gamma$ is of finite length over $R$.
\item $\Gamma e_\XXX$ is maximal Cohen-Macaulay as an $R$-module.
\end{enumerate}
\end{enumerate}
\end{corollary}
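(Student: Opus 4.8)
The plan is to deduce this from Corollary~\ref{ordercor} by taking $n=d$, and to check that for this value of $n$ both sides of that bijection collapse to the objects described here. So the work splits into two independent simplifications (one per side) plus a verification that nothing is lost.

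\emph{Reducing the cotilting side to orders.} Let $(\Lambda,U)$ be a $d$-cotilting pair with $\Lambda$ an $R$-order, $U\in\CM\Lambda$, and $^{\perp}U$ of finite type. Since $U\in\CM\Lambda$ is cotilting with $\id U_{\Lambda}\le d$, Proposition~\ref{ordercase}(1) forces $\add U=\add D_{d}\Lambda$, whence $^{\perp}U={}^{\perp}(D_{d}\Lambda)=\CM\Lambda$ by Lemma~\ref{wellknown}(1); in particular $\CM\Lambda$ is of finite type and $(\Lambda,U)$ is equivalent to $(\Lambda,D_{d}\Lambda)$. Conversely, for every $R$-order $\Lambda$ with $\CM\Lambda$ of finite type, Lemma~\ref{wellknown}(1) shows $(\Lambda,D_{d}\Lambda)$ is a $d$-cotilting pair lying in Corollary~\ref{ordercor}(1). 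It remains to check that $(\Lambda_{1},D_{d}\Lambda_{1})$ and $(\Lambda_{2},D_{d}\Lambda_{2})$ are equivalent as cotilting pairs if and only if $\Lambda_{1}$ and $\Lambda_{2}$ are Morita equivalent: one direction is immediate, and for the other a Morita equivalence $\mod\Lambda_{1}\to\mod\Lambda_{2}$ is $R$-linear (being $-\otimes_{\Lambda_{1}}P$ for a progenerator bimodule over the central ring $R$) and hence compatible with the dualities $D_{d}=\Hom_{R}(-,\omega)$, so it carries $\add D_{d}\Lambda_{1}$ onto $\add D_{d}\Lambda_{2}$. Thus the class in Corollary~\ref{ordercor}(1) with $n=d$ is in natural bijection with part~(1) above.

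\emph{Sharpening $\gl\Gamma\le d$ to $\gl\Gamma=d$.} Conditions (b), (c) in Corollary~\ref{ordercor}(2) are literally those here, so it suffices to show that any $(\Gamma,\XXX)$ with $\gl\Gamma\le d$ satisfying (b), (c) in fact has $\gl\Gamma=d$. By (the proof of) Corollary~\ref{ordercor}, such a $(\Gamma,\XXX)$ yields an $R$-order $\Lambda:=e_{\XXX}\Gamma e_{\XXX}$, an exact equivalence $\proj\Gamma\equi\CM\Lambda$ (of finite type), and, via Propositions~\ref{enough} and~\ref{projinj}, enough projectives and injectives on $\proj\Gamma$. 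Suppose $d\ge 3$ and $\gl\Gamma\le d-1$. Then by Proposition~\ref{eno}(1) (with $n=d-1$) the category $\proj\Gamma$, having enough projectives, enough injectives and $(d-2)$-kernels, is exact equivalent to $^{\perp}U'$ for a noetherian ring $\Lambda'$ and a cotilting module $U'$ with $\id U'_{\Lambda'}\le d-1$; taking the projective generator and injective cogenerator as in Proposition~\ref{eno}(2) and transporting them across $\proj\Gamma\equi\CM\Lambda$, where the projective generator is $\Lambda$ and the injective cogenerator is $D_{d}\Lambda$, we get $(\Lambda',U')\iso(\Lambda,D_{d}\Lambda)$ and therefore $\id (D_{d}\Lambda)_{\Lambda}\le d-1$, contradicting Lemma~\ref{wellknown}(1). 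Hence $\gl\Gamma=d$ for $d\ge 3$. For $d=2$ one argues directly: if $\XXX\neq\varnothing$ there is a simple $\Gamma$-module with the $2$-regular condition, so $\gl\Gamma\ge\pd_{\Gamma}S=2$, while if $\XXX=\varnothing$ then $\Gamma=\Lambda$ is an $R$-order of finite global dimension, forcing $\gl\Gamma=d=2$. Thus the two descriptions of part~(2) coincide.

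\emph{Conclusion and the main obstacle.} Composing the bijection of Corollary~\ref{ordercor} (with $n=d$) with the two identifications above gives the asserted bijection; unwound, it sends an $R$-order $\Lambda$ to $(\End_{\Lambda}(M),\XXX)$ with $M$ an additive generator of $\CM\Lambda$ and $\XXX$ the set of dotted arrows of $Q(\End_\Lambda(M))$ realising the standard exact structure on $\CM\Lambda$, and sends $(\Gamma,\XXX)$ back to $e_{\XXX}\Gamma e_{\XXX}$. The only genuinely delicate point is the lower bound $\gl\Gamma\ge d$: everything else is a routine specialisation of Corollary~\ref{ordercor}, Lemma~\ref{wellknown} and Proposition~\ref{ordercase}, whereas ruling out $\gl\Gamma<d$ essentially uses the hypothesis $d\ge 2$ and the observation (through Proposition~\ref{eno}) that lowering the global dimension by one would force the injective dimension of $D_{d}\Lambda$ over $\Lambda$ below $d$.
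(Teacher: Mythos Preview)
Your reduction of the cotilting side to Morita classes of orders is more careful than the paper's (which simply invokes Proposition~\ref{ordercase} and moves on); that part is fine.

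The real divergence is in establishing $\gl\Gamma\ge d$. The paper does this in a single stroke, uniformly in $d$: since $\Gamma e_\XXX$ is a direct summand of $\Gamma$ and is maximal Cohen--Macaulay over $R$, one has
\[
d=\dim_R\Gamma e_\XXX\le\dim_R\Gamma\le\gl\Gamma,
\]
the last inequality being the standard fact (cited from \cite{towards}) that for a noetherian $R$-algebra the Krull dimension over $R$ is bounded by the global dimension. Your route instead loops back through the categorical machinery: for $d\ge 3$ you suppose $\gl\Gamma\le d-1$, feed $\proj\Gamma$ into Proposition~\ref{eno} with $n=d-1$, and extract $\id(D_d\Lambda)_\Lambda\le d-1$, contradicting Lemma~\ref{wellknown}(1). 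This is correct and has the virtue of staying entirely inside the results already proved in the paper, at the cost of forcing the separate $d=2$ case split.

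That residual case is where you have a small gap. For $d=2$ and $\XXX=\varnothing$ you write ``$\Gamma=\Lambda$ is an $R$-order of finite global dimension, forcing $\gl\Gamma=d=2$'' without justification. This is true, but it is precisely the Krull-dimension inequality the paper invokes, so your detour ends up resting on the same external fact anyway. If you want that subcase to be genuinely self-contained, you can argue directly: $\gl\Gamma\le 1$ would make every finitely generated $\Gamma$-module a second syzygy, hence maximal Cohen--Macaulay over the $2$-dimensional $R$ by the depth lemma, which is absurd since simple $\Gamma$-modules have finite length over $R$.
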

\begin{proof}
Let us apply Corollary \ref{ordercor} to the case $n=d$.
By Proposition \ref{ordercase}, it suffices to show that for a noetherian $R$-algebra $\Gamma$ satisfying the conditions of (2)(c), we have $\gl \Gamma \geq d$. Every noetherian $R$-algebra $\Gamma$ satisfies $\dim_R \Gamma \leq \gl \Gamma$ , see e.g. \cite[Corollary 3.5(4)]{towards}. Since $\Gamma e_\XXX$ is a direct summand of $\Gamma$ and $\Gamma e_\XXX$ is maximal Cohen-Macaulay, we have $d = \dim_R \Gamma e_\XXX \leq \dim_R \Gamma \leq \gl \Gamma$.
\end{proof}

\appendix
\section{The AR theory for exact categories over a noetherian ring}
In this appendix, we study the Auslander-Reiten theory for exact categories over a noetherian complete local ring. In particular, we investigate the relationship between the existence of AR conflations, the AR duality and the notion of dualizing $R$-varieties.

\subsection{Existence of AR conflations}
Let $\EE$ be a Krull-Schmidt category. We denote by $\JJ$ the Jacobson radical of $\EE$.
We need later the following easy lemma about kernel-cokernel pairs in Krull-Schmidt categories.

\begin{lemma}\label{kc}
Suppose that $X \to Y \to Z
$ is a kernel-cokernel pair in $\EE$. Then this complex is isomorphic to the direct sum of kernel-cokernel pairs of the following forms.
\begin{enumerate}
\item $A = A \to 0$ for some A in $\EE$.
\item $0\to B=B$ for some B in $\EE$.
\item $X'\to Y' \to Z'$ with all morphisms in $\JJ$.
\end{enumerate}
\end{lemma}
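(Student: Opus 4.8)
The plan is to decompose the kernel-cokernel pair using the Krull--Schmidt property of the ambient category, first stripping off the obvious summands coming from the kernel and cokernel maps and then isolating a ``radical part.'' Let me write the given kernel-cokernel pair as $X \xrightarrow{f} Y \xrightarrow{g} Z$. The first observation is that $f$ is automatically a monomorphism and $g$ an epimorphism (in the categorical sense) in $\EE$, since $f$ is a kernel and $g$ a cokernel. So the point is not that $f,g$ are ``nice,'' but that after a change of basis on $X$, $Y$, $Z$ we can split the complex into the three listed shapes.

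The key structural step is to analyze the map $f:X\to Y$. Since $\EE$ is Krull--Schmidt, we may choose decompositions $X \iso \bigoplus_i X_i$ and $Y\iso \bigoplus_j Y_j$ into indecomposables, and represent $f$ by a matrix over $\EE$. By standard ``matrix reduction'' in a Krull--Schmidt category (using that a morphism between indecomposables is either an isomorphism or lies in $\JJ$, together with the fact that isomorphic summands can be used to perform elementary row/column operations), we can bring $f$ into a block-diagonal form $\left[\begin{smallmatrix} 1_{X''} & 0 \\ 0 & f' \end{smallmatrix}\right]$ where $1_{X''}$ is an identity on a common direct summand $X''$ of $X$ and $Y$, and every entry of $f'$ lies in $\JJ$. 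Concretely, $X \iso X' \oplus X''$, $Y \iso Y'' \oplus X''$, and under these identifications $f$ becomes $f' \oplus 1_{X''}$ with $f'\colon X' \to Y''$ having all components in $\JJ$. Because $f$ is a monomorphism, so is the identity block (trivially) and $f'$.

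Now I feed this splitting of $f$ through the kernel-cokernel condition. Since $g$ is a cokernel of $f = f' \oplus 1_{X''}$, and the cokernel of $1_{X''}$ is $0$, the map $g$ factors as $Z \iso Z'$ together with the cokernel datum of $f'$; more precisely $g$ restricted appropriately gives a cokernel $g'\colon Y'' \to Z'$ of $f'$, and the $X''$-component of $g$ is zero. So the complex decomposes as the direct sum of $X'' = X'' \to 0$ (shape (1)) and $X' \xrightarrow{f'} Y'' \xrightarrow{g'} Z'$ with $f'\in\JJ$ and $g'$ a cokernel of $f'$. It remains to treat the second complex: here $f'\in\JJ$, but $g'$ need not yet be in $\JJ$, so I repeat the reduction on the epimorphism $g'$, splitting off a summand $0\to B = B$ (shape (2)) and leaving a complex in which both maps lie in $\JJ$; one checks the residual complex is still a kernel-cokernel pair (it is a direct summand of one, and kernel-cokernel pairs are closed under taking direct summands — alternatively invoke Proposition~\ref{aiu}, since the defining module-theoretic conditions pass to summands). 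This yields shape (3) and finishes the decomposition.

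The main obstacle I anticipate is bookkeeping in the matrix-reduction argument: making precise that after block-diagonalizing $f$ the induced decomposition of $Y$ is compatible with the cokernel $g$, i.e. that $g$ really does split as $0 \oplus g'$ along $Y \iso X'' \oplus Y''$. This is where one must use the \emph{universal property} of the cokernel rather than just counting summands: the composite $X'' \hookrightarrow Y \xrightarrow{g} Z$ equals $g f$ on that summand, which is $0$, and conversely $Z$ receives a map only from the $Y''$-part; a short diagram chase with the universal properties of kernel and cokernel pins down $g \iso 0 \oplus g'$ and shows $g'$ is a cokernel of $f'$. Everything else — closure of kernel-cokernel pairs under summands, and the termination of the two reductions (they terminate because each strips off a genuine direct summand and $Y$ has finite length as a direct sum of indecomposables) — is routine.
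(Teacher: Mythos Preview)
The paper does not actually prove this lemma; it is stated as an ``easy lemma'' and left to the reader. Your argument is correct and is the standard way to fill in the details: matrix reduction in a Krull--Schmidt category puts $f$ in the form $1_{X''}\oplus f'$ with $f'\in\JJ$, the universal property of the cokernel then forces $g$ to split as $0\oplus g'$, and the dual reduction on $g'$ strips off the type~(2) summands. One point worth making explicit in your write-up is why the second reduction does not spoil the first: when you change the decomposition of $Y''$ to block-diagonalize $g'$, the morphism $f'$ remains in $\JJ$ because $\JJ$ is a two-sided ideal of $\EE$, so membership in $\JJ$ is invariant under composition with isomorphisms. With that remark the proof is complete.
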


Let $\EE$ be a Krull-Schmidt exact category. Recall that a conflation $X \xrightarrow{f} Y \xrightarrow{g} Z$ in $\EE$ is an \emph{AR conflation} if $f$ is left almost split and $g$ is right almost split.
It immediately follows from the definition that if $X \xrightarrow{f} Y \xrightarrow{g} Z$ is an AR conflation, then $f$ is left minimal, that is, any $\varphi : Y \to Y$ satisfying $\varphi f = f$ is an automorphism. Dually $g$ is right minimal in this case. Also one can prove the uniqueness of the AR conflation as in the classical case.

The following classical observation is useful for us.
For an object $X$ in $\EE$, we put $S_X := P_X / \JJ(-,X) \in \Mod \EE$ and $S^X := P^X /\JJ(X,-) \in \Mod \EE^{\op}$.

\begin{proposition}\label{ras}
Let $\EE$ be a Krull-Schmidt category. Then the following hold.
\begin{enumerate}
\item The map $X \mapsto S_X$ gives a bijection between $\ind\EE$ and the set of isomorphism classes of simple object in $\Mod\EE$.
\item A morphism $Y \xrightarrow{g} Z$ is right almost split if and only if $Z$ is indecomposable and $P_Y \xrightarrow{P_g} P_Z \to S_Z \to 0$ is exact.
\item There exists a right almost split morphism ending at $Z$ if and only if $S_Z$ is finitely presented.
\end{enumerate}
\end{proposition}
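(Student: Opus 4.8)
The statement is a classical one (compare \cite[Chapter IV]{ars}), and the plan is to transport each condition through the Yoneda embedding $P_{(-)}\colon\EE\to\Mod\EE$ and read it off from the functors $P_X$, $\JJ(-,X)$ and $S_X$.

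For (1), I would begin with the standard remark that, since $\EE$ is Krull--Schmidt, for an indecomposable $X$ the locality of $\End_\EE(X)$ makes $\JJ(-,X)$ the unique maximal subfunctor of $P_X$, so $S_X$ is simple. Conversely, given a simple $\EE$-module $S$, I would pick $X$ with $S(X)\neq 0$; by Yoneda this produces a nonzero, hence surjective, morphism $P_X\defl S$, and decomposing $X$ shows that some indecomposable summand $X'$ already surjects onto $S$ with kernel the maximal subfunctor $\JJ(-,X')$, so $S\cong S_{X'}$. Injectivity of $X\mapsto S_X$ on $\ind\EE$ then follows from the Yoneda isomorphism $\Hom_{\Mod\EE}(P_X,S_Y)\cong S_Y(X)=\EE(X,Y)/\JJ(X,Y)$: this is nonzero precisely when there is a non-radical morphism $X\to Y$, which for indecomposable $X,Y$ means an isomorphism.

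For (2), the key translation is that, by Yoneda, a morphism $h\colon W\to Z$ factors through $g$ exactly when it lies in $\im(P_g)(W)$, so $\im(P_g)\subseteq P_Z$ is the subfunctor of those morphisms that factor through $g$. I would first check that a right almost split $g$ must have indecomposable target: if $Z=Z_1\oplus Z_2$ with both summands nonzero, the canonical injections $\iota_i$ are not retractions, hence factor through $g$, and assembling the factorizations realizes $g$ itself as a retraction, a contradiction. Assuming $Z$ indecomposable, the non-retractions $W\to Z$ are precisely the radical morphisms; then ``$g$ is not a retraction'' becomes $\im(P_g)\subseteq\JJ(-,Z)$ (using that $\JJ$ is an ideal) and ``every non-retraction to $Z$ factors through $g$'' becomes $\JJ(-,Z)\subseteq\im(P_g)$. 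Combining, $g$ is right almost split iff $Z$ is indecomposable and $\im(P_g)=\JJ(-,Z)$, i.e. iff $Z$ is indecomposable and $P_Y\xrightarrow{P_g}P_Z\to S_Z\to 0$ is exact; the reverse implication is obtained by reversing the same chain of equivalences.

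For (3), one direction is immediate from (2): a right almost split morphism ending at $Z$ provides an exact sequence $P_Y\to P_Z\to S_Z\to 0$, so $S_Z$ is finitely presented. Conversely, if $S_Z$ is finitely presented then by (1) it is simple, $Z$ is indecomposable, and $P_Z\defl S_Z$ is a projective cover whose kernel is the radical subfunctor $\JJ(-,Z)$; invoking the standard fact that a finitely presented module has a finitely generated syzygy with respect to any finitely generated projective cover, I obtain an epimorphism $P_Y\defl\JJ(-,Z)$, whose composite with $\JJ(-,Z)\hookrightarrow P_Z$ is $P_g$ for some $g\colon Y\to Z$ by Yoneda, and then $\im(P_g)=\JJ(-,Z)$ makes $g$ right almost split by (2). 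I do not expect any deep difficulty here; the one spot deserving care is exactly this last step, namely ensuring that the projective cover $P_Z\defl S_Z$ is available in $\Mod\EE$ and that finite presentation really does force the syzygy $\JJ(-,Z)$ to be finitely generated -- everything else is bookkeeping with the radical of a Krull--Schmidt category in the sense of \cite{krause}.
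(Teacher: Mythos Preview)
The paper does not give a proof of this proposition at all: it simply records ``It is well-known, see \cite{artin2} for example.'' Your proposal therefore supplies what the paper omits, and the argument you sketch is the standard one and is essentially correct.

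One small slip to flag in part (3): the implication ``$S_Z$ finitely presented $\Rightarrow$ $S_Z$ simple and $Z$ indecomposable'' does not follow from (1). For a decomposable $Z=Z_1\oplus Z_2$ one has $S_Z\cong S_{Z_1}\oplus S_{Z_2}$, which is semisimple but not simple, and it can perfectly well be finitely presented. In fact statement (3) as written is only correct for indecomposable $Z$ (which is the only case used later in the paper), so you should either add that hypothesis explicitly or, in the converse direction, simply assume $Z$ indecomposable from the outset rather than trying to deduce it. With that adjustment your argument for (3) is fine: once $Z$ is indecomposable, $P_Z\defl S_Z$ is a projective cover with kernel $\JJ(-,Z)$, and the finite presentation of $S_Z$ forces $\JJ(-,Z)$ to be finitely generated (a quotient of a finitely generated module, using Schanuel and the fact that quotients of finitely generated objects are finitely generated), after which (2) applies directly.
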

\begin{proof}
It is well-known, see \cite{artin2} for example.
\end{proof}

The following proposition says that the existence of the right almost split map ensures the existence of the AR conflation.
\begin{proposition}\label{arconfl}
Let $Z$ be an indecomposable non-projective object in $\EE$. Then the following are equivalent.
\begin{enumerate}
\item There exists a right almost split morphism to $Z$.
\item There exists an AR conflation $X \infl Y \defl Z$ in $\EE$ ending at $Z$.
\end{enumerate}
\end{proposition}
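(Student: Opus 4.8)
The direction $(2)\Rightarrow(1)$ is trivial: if $X\xrightarrow{f}Y\xrightarrow{g}Z$ is an AR conflation then $g$ is right almost split by definition. So the content is $(1)\Rightarrow(2)$, and the plan is to reconstruct the conflation from the right almost split map together with the homological characterization of conflations in terms of $\CC_2(\EE)$-modules (Proposition \ref{aiu}, Lemma \ref{kctokuchou}) and the translation built from $\Ext^2_\EE(-,\EE)$.

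First I would take a right almost split morphism $g:Y\to Z$. By Proposition \ref{ras}(2), $Z$ is indecomposable and $P_Y\xrightarrow{P_g}P_Z\to S_Z\to 0$ is exact in $\Mod\EE$, so $S_Z$ is finitely presented. Since $Z$ is non-projective, $S_Z\neq 0$ is not of the form $P_W/(\text{nothing})$; more precisely I claim $S_Z$ actually lies in $\CC_2(\EE)$. To see this, note $S_Z$ is the defect-type module attached to the would-be AR conflation, and the key computation is that $\Ext^i_\EE(S_Z,\EE)=0$ for $i=0,1$ and $S_Z$ has projective dimension exactly $2$. The vanishing $\Hom_\EE(S_Z,\EE)=0$ is automatic ($S_Z$ is a simple functor, has no nonzero map to a representable since $Z$ is not a summand-detecting object, and more concretely $P^Z\to P^Y$ is mono because $g$ is not a retraction by the same argument as in Proposition \ref{aiu}(1)); the vanishing of $\Ext^1_\EE(S_Z,\EE)$ and $\pd S_Z=2$ should follow by splicing a projective presentation of $S_Z$ with a projective cover of its first syzygy and invoking that $Z$ is non-projective together with idempotent completeness of $\EE$. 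Granting $S_Z\in\CC_2(\EE)$, Lemma \ref{kctokuchou} produces an exact sequence $0\to P_X\xrightarrow{P_f}P_Y\xrightarrow{P_g}P_Z\to S_Z\to 0$, and by the Yoneda lemma together with Lemma \ref{Schanuel} (idempotent completeness) we may take the first syzygy to be $P_X$ for an honest object $X\in\EE$, with $f:X\to Y$ realized so that $X\xrightarrow{f}Y\xrightarrow{g}Z$ is a kernel-cokernel pair (Proposition \ref{aiu}(2)).

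It then remains to upgrade this kernel-cokernel pair to a conflation and to check that $f$ is left almost split. The left almost split property of $f$ is the formal dual of what we started with: applying $\Ext^2_\EE(-,\EE)$ to $S_Z$ (using Lemma \ref{e2}, the duality $\CC_2(\EE)\equi\CC_2(\EE^{\op})$) gives a simple $\EE^{\op}$-module $S^X$, which by Proposition \ref{ras}(2) applied on the opposite side says exactly that $P^Z\xrightarrow{P^g}P^Y\xrightarrow{P^f}P^X\to S^X\to 0$ is exact with $S^X$ simple, i.e. $f$ is left almost split and $X$ is indecomposable. That $X\xrightarrow{f}Y\xrightarrow{g}Z$ actually belongs to the exact structure $F$ is where I expect the real subtlety: this requires knowing that the distinguished Serre subcategory $\DD$ corresponding to $F$ under Theorem \ref{main} contains $S_Z$. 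I would argue this by the standard AR-theoretic trick — take any conflation $A\infl B\xrightarrow{h} Z$ in $F$ (one exists since $Z$ is non-projective, using that $\EE$ has enough... wait, we do not assume that), and instead argue directly: pick any non-split conflation ending at $Z$, or rather use that $g$ being right almost split forces any $F$-deflation onto $Z$ to factor through $g$, hence $\coker(P_g)=S_Z$ receives a surjection from $\coker(P_{(\text{that deflation})})\in\DD$, and $\DD$ being a Serre subcategory of $\mod\EE$ is closed under quotients, giving $S_Z\in\DD$; then $F(\DD)\ni(X\xrightarrow{f}Y\xrightarrow{g}Z)$ by Definition \ref{maps}. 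The main obstacle, then, is establishing that there exists at least one $F$-deflation onto $Z$ at all (equivalently that $S_Z\in\DD$) purely from non-projectivity — I would handle this by the contrapositive via Proposition \ref{proj}: $Z$ non-projective means some $M\in\DD$ has $M(Z)\neq 0$, hence $M$ surjects onto $S_Z$, hence $S_Z\in\DD$ by the Serre property. Once $S_Z\in\DD$, minimality of $f$ and $g$ (left/right minimal, as noted after the definition of AR conflations) combined with indecomposability of $X$ and $Z$ shows the conflation has no split summands of the forms (1),(2) in Lemma \ref{kc}, confirming it is a genuine AR conflation.
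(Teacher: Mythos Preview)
Your core idea matches the paper's: use Proposition~\ref{proj} to find $M\in\DD$ with $M(Z)\neq 0$, and then deduce $S_Z\in\DD$ by the Serre property. Once $S_Z\in\DD$ you get a conflation whose deflation is right almost split. Two points, however, need correction.

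First, the detour attempting to prove $S_Z\in\CC_2(\EE)$ directly is unnecessary and you do not actually complete it (the ``should follow by splicing'' step is exactly the hard part). The paper avoids this entirely: since $\DD\subset\CC_2(\EE)$ by Theorem~\ref{main}, the inclusion $S_Z\in\CC_2(\EE)$ is automatic once you know $S_Z\in\DD$. Also note a small imprecision: from $M(Z)\neq 0$ you get a nonzero $a:P_Z\to M$, and it is $\im a$, not $M$ itself, that surjects onto $S_Z$. To apply the Serre property in $\mod\EE$ to the surjection $\im a\twoheadrightarrow S_Z$ you need its kernel $\JJ(-,Z)/\ker a$ to be finitely generated; this is precisely where hypothesis~(1) is used, since the existence of a right almost split map gives $\JJ(-,Z)$ finitely generated.

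Second, your argument that $f$ is left almost split has a genuine gap. You invoke Lemma~\ref{e2} to say $\Ext^2_\EE(S_Z,\EE)$ is simple, but Lemma~\ref{e2} only gives a duality of \emph{exact} categories $\CC_2(\EE)\simeq\CC_2(\EE^{\op})$; it does not send simple objects of $\Mod\EE$ to simple objects of $\Mod\EE^{\op}$ without further hypotheses (such as $\DD$ being an abelian length category, as in Lemma~\ref{simplemma}, which requires noetherianity). In the generality of Proposition~\ref{arconfl}---a Krull--Schmidt exact category with no noetherian assumption---this fails. Moreover, your final appeal to Lemma~\ref{kc} is circular: you assume $f$ left minimal and $X$ indecomposable to rule out split summands, but those facts are consequences of the conflation being AR, which is what you are proving. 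The paper instead uses Lemma~\ref{kc} in the forward direction: strip off the split summands of type $A=A\to 0$ so that $f\in\JJ$ (hence $g$ is right minimal), and then the standard argument---$g$ right almost split and right minimal in a Krull--Schmidt category forces $f$ left almost split and $X$ indecomposable---finishes the proof.
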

\begin{proof}
(1) $\Rightarrow$ (2):
Since $Z$ is not projective, there exists some object $M$ in $\DD$ such that $M(Z) \neq 0$ by Proposition \ref{proj}. This means that there exists some non-zero morphism $a:P_Z \to M$ by the Yoneda lemma. Since $\im a$ is finitely generated and $\DD$ is a Serre subcategory of  $\mod\EE$, we have that $\im a$ is also in $\DD$.
Since $P_Z$ has a unique maximal subobject $\JJ(-,Z)$, it is easy to see that the cokernel of the composition $\JJ(-,Z) \to P_Z \to \im a$ is isomorphic to $S_Z$. Since $\JJ(-,Z)$ is finitely generated by (1), we have $S_Z \in \DD$.
Therefore, we obtain a conflation $X \xrightarrow{f} Y \xrightarrow{g} Z$ such that $g$ is right almost split. Now we may assume that $f$ is in $\JJ$ by Lemma \ref{kc}. This clearly implies that $g$ is right minimal and the classical argument shows that $f$ is left almost split.

(2) $\Rightarrow$ (1):
Obvious from the definition.
\end{proof}
We say that $\EE$ \emph{has AR conflations} if for every indecomposable non-projective object $Z$ there exists an AR conflation ending at $Z$, and for every indecomposable non-injective object $X$ there exists an AR conflation starting at $X$.
\begin{corollary}\label{exist}
Suppose that $\EE := \proj \Lambda$ is an exact category for a semiperfect noetherian ring $\Lambda$. Then $\EE$ has AR conflations.
\end{corollary}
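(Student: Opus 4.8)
The plan is to deduce the existence of AR conflations from Propositions \ref{ras} and \ref{arconfl}, reducing everything to a finiteness property of the functor category $\Mod\EE$, and then to verify that property using the hypothesis that $\Lambda$ is noetherian.

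First, since $\Lambda$ is semiperfect, $\EE=\proj\Lambda$ is a Krull-Schmidt category, so the results of Appendix A.1 apply to it. Evaluation at the rank-one free module gives an equivalence $\Mod\EE=\Mod(\proj\Lambda)\xrightarrow{\ \sim\ }\Mod\Lambda$ (with quasi-inverse $N\mapsto\Hom_\Lambda(-,N)|_{\proj\Lambda}$) which carries each representable $P_Z=\EE(-,Z)$ to the right $\Lambda$-module $Z$. Under this equivalence $\mod\EE$ corresponds to $\mod\Lambda$ and $\mod_1\EE$ corresponds to $\mod_1\Lambda$; since $\Lambda$ is noetherian we have $\mod\Lambda=\mod_1\Lambda$, and therefore every finitely generated $\EE$-module is finitely presented.

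Now let $Z$ be an indecomposable non-projective object of $\EE$. The simple $\EE$-module $S_Z=P_Z/\JJ(-,Z)$ is a quotient of the representable functor $P_Z$, hence finitely generated, hence finitely presented by the previous paragraph. By Proposition \ref{ras}(3) there is a right almost split morphism ending at $Z$, and Proposition \ref{arconfl} then produces an AR conflation $X\infl Y\defl Z$ ending at $Z$. For the dual statement, apply the same argument to $\EE^{\op}=\proj\Lambda^{\op}$, which is again of the required form because $\Lambda^{\op}$ is semiperfect and noetherian: for an indecomposable non-injective $X$ of $\EE$ (equivalently, indecomposable non-projective in $\EE^{\op}$) we obtain a left almost split morphism starting at $X$, hence by the dual of Proposition \ref{arconfl} an AR conflation starting at $X$. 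Thus $\EE$ has AR conflations.

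This argument is essentially an assembly of the results already established; the only point requiring care is that the equivalence $\Mod\EE\simeq\Mod\Lambda$ restricts correctly to the subcategories of finitely generated and finitely presented modules, so that the implication ``finitely generated $\Rightarrow$ finitely presented'' transfers from $\mod\Lambda$ (where it holds because $\Lambda$ is noetherian) to $\mod\EE$. No genuine difficulty is expected.
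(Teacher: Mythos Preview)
Your proof is correct and follows essentially the same approach as the paper: use the equivalence $\Mod\EE\simeq\Mod\Lambda$, observe that simples are finitely presented because $\Lambda$ is noetherian, and conclude via Propositions \ref{ras} and \ref{arconfl}. You simply spell out more of the details (in particular the dual direction via $\Lambda^{\op}$) that the paper leaves implicit.
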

\begin{proof}
We have $\Mod \EE \equi \Mod \Lambda$, so every simple object in this category is finitely presented because $\Lambda$ is noetherian. Thus the assertion holds by Propositions \ref{ras} and \ref{arconfl}.
\end{proof}

\subsection{AR conflations, the AR duality and dualizing varieties}

Next we investigate the relationship between AR conflations and the AR duality in the general setting. \emph{In what follows, we denote by $R$ a commutative noetherian complete local ring}.

We denote by $I$ the injective envelope of the simple $R$-module $R/ \rad R$, and by $D:\Mod R \to \Mod R$ the Matlis dual $D=\Hom_R(-,I)$. It is well-known that this induces a duality $D:\noeth R \leftrightarrow \art R$ and $D: \fl R \leftrightarrow \fl R$, where $\noeth R$, $\art R$ and $\fl R$ denotes the subcategory of $\Mod R$ consisting of $R$-modules which are noetherian, artinian and of finite length respectively.

\emph{From now on, we assume that $\EE$ is a Hom-noetherian idempotent complete exact $R$-category.}
Recall that a morphism $f:W \to Z$ in $\EE$ is \emph{projectively trivial} if for each object $X$, the induced map $\Ext^1_\EE(Z,X) \to \Ext^1_\EE(W,X)$ is zero. It is easy to see that $f$ is projectively trivial if and only if $f$ factors through every deflation $Y \defl Z$. If $\EE$ has enough projectives, then $f$ is projectively trivial if and only if $f$ factors through some projective object.
Denote by $\PP(X,Y)$ the set of all projectively trivial morphisms from $X$ to $Y$. Then $\PP$ is a two-sided ideal of $\EE$ and we put $\un{\EE} := \EE/\PP$, which we call the \emph{projectively stable category}.
Dually we define the notion of \emph{injectively trivial} morphisms and the \emph{injectively stable category} $\ov{\EE}:= \EE/\II$.

The following theorem is an exact category version of \cite[Proposition I.2.3]{rvdb}, and it generalizes \cite[Theorem 3.6]{lnp} and \cite[Proposition 2.4]{jiao}.
Also see \cite{lnp,in} for related work.
Note that we do not assume that $R$ is artinian or $\EE$ is Hom-finite. This enables us to give another proof of the one implication of Auslander's theorem about isolated singularities, see Remark \ref{isolatedrmk}
\begin{proposition}\label{ardual}
Suppose that $\EE$ is Hom-noetherian Krull-Schmidt exact $R$-category such that either (i) $\EE$ is Ext-noetherian or (ii) $\ov{\EE}$ is Hom-finite. Let $Z$ be an indecomposable non-projective object in $\EE$. Then the following are equivalent.
\begin{enumerate}
\item There exists a right almost split morphism to $Z$.
\item There exists an AR conflation $X \infl Y \defl Z$ ending at $Z$.
\item The functor $D \Ext^1_\EE(Z,-):\ov{\EE} \to \Mod R$ is representable.
\end{enumerate}
Moreover, if $X \infl Y \defl Z$ is an AR conflation, then $D \Ext^1_\EE(Z,-) \equi \ov{\EE}(-,X)$ holds, and thus $\Ext^1_\EE(Z,W)$ and $\ov{\EE}(W,X)$ are of finite length over $R$ for every $W \in \EE$.
\end{proposition}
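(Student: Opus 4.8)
The plan is the following. The equivalence $(1)\Leftrightarrow(2)$ is exactly Proposition~\ref{arconfl} applied to the indecomposable non-projective object $Z$, so no new argument is needed there; likewise the displayed isomorphism $D\Ext^1_\EE(Z,-)\equi\ov\EE(-,X)$ will give $(2)\Rightarrow(3)$, and it then remains to prove $(3)\Rightarrow(1)$. Before that I would record the trivial but necessary point that $D\Ext^1_\EE(Z,-)$ genuinely descends to the injectively stable category $\ov\EE$: an injectively trivial morphism $W\to W'$ induces, by definition, the zero map $\Ext^1_\EE(Z,W)\to\Ext^1_\EE(Z,W')$, hence also after applying the Matlis dual $D$, so asking for representability over $\ov\EE$ makes sense.

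For $(2)\Rightarrow(3)$ together with the ``moreover'': fix an AR conflation $X\xrightarrow{f}Y\xrightarrow{g}Z$ with class $\eta\in\Ext^1_\EE(Z,X)$ (so $X$ is indecomposable as well). Translating the almost-split properties of $f$ and $g$ through push-out and pull-back of $\eta$ shows that $\eta\neq0$, that $\rad\End_\EE(Z)$ annihilates $\eta$ under the left action from the first variable, and that $\rad\End_\EE(X)$ annihilates $\eta$ under the right action from the second variable. In particular $R\eta$ is a nonzero cyclic $R$-module killed by $\rad R$, hence $\iso R/\rad R$; fixing an embedding $R\eta\hookrightarrow I$ and extending along $R\eta\hookrightarrow\Ext^1_\EE(Z,X)$ (possible as $I$ is injective) produces $\phi\colon\Ext^1_\EE(Z,X)\to I$ with $\phi(\eta)\neq0$. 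By the Yoneda lemma $\phi$ determines a natural transformation $\tau\colon\ov\EE(-,X)\to D\Ext^1_\EE(Z,-)$, sending $a\colon W\to X$ to the functional $\phi\circ a_{*}$ where $a_{*}\colon\Ext^1_\EE(Z,W)\to\Ext^1_\EE(Z,X)$ is push-out along $a$; this is well defined modulo $\II$ because injectively trivial $a$ induce $a_{*}=0$. That $\tau$ is an isomorphism is then checked componentwise on indecomposable $W$ by the classical Auslander--Reiten argument (the exact-category adaptation of \cite[Proposition I.2.3]{rvdb}): for $W$ injective both sides vanish, for $W\cong X$ one identifies $\Ext^1_\EE(Z,X)$ as the Matlis dual of the local ring $\ov\EE(X,X)$ with $\eta$ spanning its socle, and for $W$ indecomposable non-injective with $W\not\cong X$ one uses that the deflation underlying any nonzero $\xi\in\Ext^1_\EE(Z,W)$ factors through the right almost split map $g$, yielding $a\colon W\to X$ with $a_{*}\xi=\eta$. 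Granting the isomorphism, the finiteness claims drop out: in case (ii) $\ov\EE(W,X)$ has finite length by hypothesis, whence so do $D\Ext^1_\EE(Z,W)$ and its Matlis dual $\Ext^1_\EE(Z,W)$; in case (i) $\Ext^1_\EE(Z,W)$ is noetherian over $R$, so $D\Ext^1_\EE(Z,W)\iso\ov\EE(W,X)$ is artinian, and being simultaneously a subquotient of the noetherian $R$-module $\EE(W,X)$ it is of finite length, hence so is $\Ext^1_\EE(Z,W)$.

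For $(3)\Rightarrow(1)$: suppose $D\Ext^1_\EE(Z,-)\equi\ov\EE(-,X)$. Discarding the injective direct summands of $X$ (which contribute $0$ to $\ov\EE(-,X)$) and using that $\End_\EE(Z)$ is local to see that the representing object is indecomposable, one reduces to $X$ indecomposable non-injective, so that $\ov\EE(X,X)$ is local. Then $\Ext^1_\EE(Z,X)\iso D\ov\EE(X,X)$ has simple socle; choosing a generator $\eta$ of it and forming the conflation $X\to Y\xrightarrow{g}Z$ representing $\eta$, the standard argument again shows that $g$ is right almost split, which is $(1)$. (Equivalently one may argue through Proposition~\ref{ras}: representability forces the simple $\EE$-module $S_Z$ to be finitely presented.)

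The step I expect to be the real obstacle is not the almost-split diagram chase — which is classical and goes through essentially unchanged — but the finiteness bookkeeping needed to keep Matlis duality available, since here neither $R$ is artinian nor $\EE$ is Hom-finite and so $\Ext^1_\EE(Z,-)$ is not a priori finite-length-valued. The role of the two alternative hypotheses (i) and (ii) is precisely to pin down, at the few places where the classical proof invokes finiteness, that the relevant $\Ext$- and $\Hom$-groups are $D$-reflexive and ultimately of finite length over $R$; organizing this cleanly — and observing that it suffices, which is what underlies the non-Hom-finite isolated-singularity application of Remark~\ref{isolatedrmk} — is where the care goes.
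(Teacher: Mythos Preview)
Your overall strategy matches the paper's: build the pairing $\langle a,\xi\rangle=\phi(a_*\xi)$ from a chosen $\phi\in D\Ext^1_\EE(Z,X)$ with $\phi(\eta)\neq0$, and read off the natural map $\tau_W\colon\ov\EE(W,X)\to D\Ext^1_\EE(Z,W)$. The equivalence $(1)\Leftrightarrow(2)$ via Proposition~\ref{arconfl} and your sketch of $(3)\Rightarrow(1)$ are fine.

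The gap is in $(2)\Rightarrow(3)$, where your logical order is inverted. You write ``that $\tau$ is an isomorphism is then checked \ldots\ by the classical Auslander--Reiten argument'' and only afterwards say ``granting the isomorphism, the finiteness claims drop out''. But the classical argument does \emph{not} produce an isomorphism directly: what the almost-split properties yield is non-degeneracy of the pairing in each variable, i.e.\ two \emph{injections}
\[
\ov\EE(W,X)\hookrightarrow D\Ext^1_\EE(Z,W)\quad\text{and}\quad \Ext^1_\EE(Z,W)\hookrightarrow D\,\ov\EE(W,X).
\]
Your case analysis only sketches the second one (factoring the deflation of a nonsplit $\xi$ through the right almost split $g$); the first requires the dual argument using that $f$ is \emph{left} almost split --- for $a\colon W\to X$ not injectively trivial one finds $\zeta\in\Ext^1_\EE(V,W)$ with $a_*\zeta\neq0$, factors the resulting non-split inflation $X\to E'$ through $f$, and pulls back to produce $\xi\in\Ext^1_\EE(Z,W)$ with $a_*\xi=\eta$. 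Even with both injections in hand, they are not isomorphisms in general: one needs hypothesis (i) or (ii) to force finite length and hence equality. In case (i), the first injection shows $\ov\EE(W,X)$ is artinian (as a submodule of $D$ of a noetherian module), while Hom-noetherianity makes it noetherian, so it has finite length; then the second injection bounds $\length\Ext^1_\EE(Z,W)\le\length\ov\EE(W,X)$, and comparing with the first injection forces equality. In case (ii) the same length comparison runs from the other side. This is exactly how the paper argues (citing \cite[Lemma~2.1]{jiao} for the non-degeneracy). Your deduction ``$D\Ext^1_\EE(Z,W)\iso\ov\EE(W,X)$ is artinian'' in case (i) already presupposes the isomorphism you are trying to prove, so as written it is circular; reorder the argument so that the two injections come first and the finiteness hypotheses are what close the loop.
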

\begin{proof}
(1) $\Leftrightarrow$ (2): This is Proposition \ref{arconfl}.

(2) $\Rightarrow$ (3):
Suppose that $\eta: X \infl Y \defl Z$ is an AR conflation. We regard $0 \neq \eta \in \Ext^1_\EE(Z,X)$. Since $I$ is a cogenerator of $\Mod R$, there exists a non-zero morphism $\gamma:\Ext^1_\EE(Z,X) \to I$ in $\Mod R$ such that $\gamma(\eta) \neq 0$. Then we have a morphism
\[
\langle -,-\rangle _W : \ov{\EE}(W,X) \otimes_R \Ext^1_\EE(Z,W) \to I, \hspace{5mm} (\ov{h},\mu) \mapsto \gamma(h \mu)
\]
in $\Mod R$, where $h \mu$ is the image of $\mu$ by the induced morphism $\Ext^1_\EE(Z,f):\Ext^1_\EE(Z,W) \to \Ext^1_\EE(Z,X)$.
Then the same proof as in \cite[Lemma 2.1]{jiao} applies here to show that $\langle -,- \rangle_W$ is non-degenerate in both variables. Therefore, we have two injections
$\ov{\EE}(W,X) \hookrightarrow D \Ext^1_\EE(Z,W)$ and
$\Ext^1_\EE(Z,W) \hookrightarrow D \ov{\EE}(W,X)$ in $\Mod R$.
They are obviously isomorphisms if (ii) $\ov{\EE}$ is Hom-finite. Suppose that (i) $\EE$ is Ext-noetherian. Then $D \Ext^1_\EE(Z,W)$ is artinian, thus $\ov{\EE}(W,X)$ is of finite length. Therefore two injections are isomorphisms and both of $\ov{\EE}(W,X)$ and $\Ext^1_\EE(Z,W)$ are of finite length over $R$.
The naturality in $W$ is clear from a direct calculation, so we obtain an isomorphism of functors $D \Ext^1_\EE(Z,-) \equi \ov{\EE}(-,X)$.

(3) $\Rightarrow$ (2):
The proof of \cite[Lemma 2.2]{jiao} applies here.
\end{proof}

In the rest of the appendix, we show that the existence of AR conflations are closely related to the notion of dualizing $R$-varieties, introduced in \cite{stable} and \cite{periodic}.
\begin{definition}
Let $\CC$ be a Hom-finite idempotent complete $R$-category. We say that $\CC$ is a \emph{dualizing $R$-variety} if it satisfies both of the following.
\begin{enumerate}
\item For any finitely presented $R$-functor $F:\CC^{\op} \to \fl R$, the composition functor $DF : \CC^{\op} \to \fl R \to \fl R$ is finitely presented.
\item For any finitely presented $R$-functor $G:\CC \to \fl R$, the composition functor $DF : \CC \to \fl R \to \fl R$ is finitely presented.
\end{enumerate}
\end{definition}
The following properties are immediate, which we state without proofs.
\begin{lemma}
Let $\CC$ be a dualizing $R$-variety. Then the following hold.
\begin{enumerate}
\item The category of finitely presented functors $\mod_1 \CC$ and $\mod_1 \CC^{\op}$ are abelian categories.
\item $D$ induces a duality $D:\mod_1\CC \leftrightarrow \mod_1 \CC^{\op}$.
\end{enumerate}
\end{lemma}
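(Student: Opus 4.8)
The plan is to extract everything from the two defining conditions of a dualizing $R$-variety together with two standard facts: (a) for any additive category $\AA$, the category $\mod_1 \AA$ of finitely presented (contravariant) functors is abelian if and only if $\AA$ has weak kernels; and (b) the Matlis dual $D = \Hom_R(-,I)$ is exact on $\Mod R$ and restricts to a duality on $\fl R$ with $D\circ D\cong\id$ naturally. I would establish part (2) first and then use it to prove part (1).

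\emph{Part (2).} Viewing a functor $F\colon\CC^{\op}\to\fl R$ as a contravariant $\CC$-module, the pointwise composite $D\circ F$ is a \emph{covariant} $\CC$-functor (contravariant composed with contravariant), that is, an object of $\Mod\CC^{\op}$; and conversely $D$ turns covariant $\fl R$-valued functors into contravariant ones. Condition (1) in the definition of a dualizing variety is precisely the assertion that $D\circ F\in\mod_1\CC^{\op}$ whenever $F\in\mod_1\CC$, and condition (2) is the reverse inclusion. Since $D\circ D\cong\id_{\fl R}$ naturally and all functors in sight are $\fl R$-valued, composing this natural isomorphism with the relevant functors gives $(D\circ{-})\circ(D\circ{-})\cong\id$ on both $\mod_1\CC$ and $\mod_1\CC^{\op}$; hence $D\circ{-}$ is a pair of mutually quasi-inverse contravariant functors, i.e.\ a duality $\mod_1\CC\leftrightarrow\mod_1\CC^{\op}$.

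\emph{Part (1).} By fact (a) it suffices to show that a dualizing $R$-variety $\CC$ has weak kernels: then $\mod_1\CC$ is abelian, and $\mod_1\CC^{\op}\simeq(\mod_1\CC)^{\op}$ is abelian too by part (2). Given $f\colon X\to Y$ in $\CC$, set $F_0 := \ker(\CC(-,X)\xrightarrow{\CC(-,f)}\CC(-,Y))$ and $G := \coker(\CC(-,f))$, so that $0\to F_0\to\CC(-,X)\to\CC(-,Y)\to G\to 0$ is exact in $\Mod\CC$, and — because $\CC$ is Hom-finite — all four terms are pointwise of finite length over $R$. Applying the exact functor $D\circ{-}$ yields an exact sequence $0\to DG\to D\CC(-,Y)\to D\CC(-,X)\to DF_0\to 0$ of covariant $\CC$-functors. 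Now $D\CC(-,X)$ and $D\CC(-,Y)$ lie in $\mod_1\CC^{\op}$ by condition (1) applied to the representables, and $DG\in\mod_1\CC^{\op}$ by part (2); hence $DF_0$, being the cokernel of a morphism between finitely presented functors, again lies in $\mod_1\CC^{\op}$ (cokernels of morphisms of finitely presented functors are finitely presented, by a routine argument with finite projective presentations). Applying part (2) once more, $F_0\cong D(DF_0)$ is finitely presented, in particular finitely generated, and a finite generating set of $F_0$ provides a weak kernel of $f$. The self-duality of the dualizing conditions ($\CC$ is dualizing iff $\CC^{\op}$ is) yields weak cokernels as well, though these are not needed once $\mod_1\CC$ is known to be abelian.

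The step that is more than formal bookkeeping is the propagation of finiteness along the four-term exact sequence in Part (1): one must arrange the sequence so that membership in $\mod_1\CC^{\op}$ passes all the way to $DF_0$, and verify that the cokernel of a morphism of finitely presented functors is again finitely presented. Everything else reduces to manipulating the Matlis dual by composition with functors and applying the Yoneda lemma.
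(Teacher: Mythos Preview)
The paper does not give a proof of this lemma at all: it is stated with the remark ``the following properties are immediate, which we state without proofs.'' Your argument is a correct way to fill in the omitted details, and follows the standard route (going back to Auslander--Reiten's original treatment of dualizing varieties): first use the defining conditions together with the natural isomorphism $D\circ D\cong\id$ on $\fl R$ to see that $D\circ(-)$ is a genuine anti-equivalence between $\mod_1\CC$ and $\mod_1\CC^{\op}$, and then bootstrap weak kernels in $\CC$ by dualising the four-term sequence and pushing finite presentation through the cokernel. The one step you flag as ``more than formal bookkeeping'' --- that cokernels of maps between finitely presented functors are finitely presented --- is indeed routine (concatenate the presentations), and your observation that all functors involved land in $\fl R$ because $\CC$ is Hom-finite is exactly what makes $D\circ D\cong\id$ applicable pointwise. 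Nothing is missing.
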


The following technical lemma is elementary.
\begin{lemma}\label{ff}
Let $\EE$ be an exact category with enough projectives and injectives. Then $\mod_1 \un{\EE}$ is an abelian category with enough projectives and injectives. The functor $\un{\EE} \to \mod_1 \un{\EE}$ defined by $X \mapsto \un{\EE}(-,X)$ gives the equivalence between $\un{\EE}$ and the category of projective objects in $\mod_1\un{\EE}$, and the functor $\ov{\EE} \to \mod_1 \un{\EE}$ defined by $X \mapsto \Ext^1_\EE(-,X)$ gives the equivalence between $\ov{\EE}$ and the category of injective objects in $\mod_1\un{\EE}$.
\end{lemma}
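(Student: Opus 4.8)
The plan is to prove the three assertions---abelianness with enough projectives, the description of the projective objects, and the description of the injective objects---in that order, the last being the only one requiring real work. First I would check that $\un{\EE}$ has weak kernels, so that $\mod_1\un{\EE}$ is an abelian category whose projective objects are generated by the representables, by the standard theory of coherent functors. Given $\bar f\colon X\to Y$ in $\un{\EE}$, lift it to $f\colon X\to Y$ in $\EE$, choose a deflation $\pi\colon P\defl Y$ with $P$ projective, and let $k\colon K\to X$ be the composite of the inflation $K\infl X\oplus P$ (the kernel of the deflation $[f\ \pi]\colon X\oplus P\defl Y$) with the projection to $X$. Then $\bar k$ is a weak kernel of $\bar f$: one has $\bar f\bar k=0$ because $fk$ factors through $P$, and any $\bar g\colon Z\to X$ with $\bar f\bar g=0$ has $fg$ projectively trivial, hence $fg$ factors through $\pi$ since its target is covered by the projective $P$, so the resulting map $Z\to X\oplus P$ factors through $K$ and recovers $\bar g$ through $\bar k$. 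The Yoneda functor $X\mapsto\un{\EE}(-,X)$ is fully faithful with values in projective objects; every finitely presented functor is a cokernel of a map of representables, so $\mod_1\un{\EE}$ has enough projectives, and each projective object, being a direct summand of a representable, is itself representable because $\un{\EE}$ is idempotent complete (inherited from $\EE$). This gives $\un{\EE}\equi\proj(\mod_1\un{\EE})$.

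Next I would analyze the functor $X\mapsto\Ext^1_\EE(-,X)$. For $X\in\EE$ choose an injective conflation $X\infl I\xrightarrow{p}X^1$. The long exact sequence for $\Ext_\EE$ gives a presentation $\EE(-,I)\to\EE(-,X^1)\to\Ext^1_\EE(-,X)\to 0$; a projectively trivial morphism $W\to X^1$ factors through a projective, which in turn lifts along the deflation $p$, so this presentation descends to $\un{\EE}(-,I)\to\un{\EE}(-,X^1)\to\Ext^1_\EE(-,X)\to 0$, proving $\Ext^1_\EE(-,X)\in\mod_1\un{\EE}$. Since an injectively trivial morphism induces the zero map on $\Ext^1_\EE(-,-)$, we obtain a functor $\ov G\colon\ov{\EE}\to\mod_1\un{\EE}$, $X\mapsto\Ext^1_\EE(-,X)$. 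To see it is fully faithful, note that because $\Ext^1_\EE(W,I)=0$ the map $\un{\EE}(-,X^1)\to\Ext^1_\EE(-,X)$ sending $\psi$ to $\psi^*[X\infl I\defl X^1]$ is surjective, so a transformation $\Ext^1_\EE(-,X)\to\Ext^1_\EE(-,X')$ is determined by its value on this generating class, and well-definedness of such a value amounts to its vanishing after pullback along $p$; the long exact sequence of $\Ext_\EE(-,X')$ applied to $X\infl I\xrightarrow{p}X^1$ then identifies the set of these values with the image of the connecting map $\EE(X,X')\to\Ext^1_\EE(X^1,X')$, $g\mapsto g_*[X\infl I\defl X^1]$, whose kernel is exactly the injectively trivial morphisms, i.e.\ with $\ov{\EE}(X,X')$.

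Finally it remains to show that each $\Ext^1_\EE(-,X)$ is injective in $\mod_1\un{\EE}$ and that these functors cogenerate: then $\mod_1\un{\EE}$ has enough injectives and, since $\ov{\EE}$ is idempotent complete, every injective object---being a direct summand of some $\Ext^1_\EE(-,X)$---is again of this form, which finishes the proof. Injectivity I would verify by testing against a monomorphism $M'\infl M$: writing finite presentations of $M$ and $M'$ by representables and applying Yoneda, the lifting of a morphism $M'\to\Ext^1_\EE(-,X)$ unwinds to a lifting problem in $\EE$ along the deflation $I\xrightarrow{p}X^1$ that is solved by the injectivity of $I$. Cogeneration follows because any $M\in\mod_1\un{\EE}$ embeds into some $\Ext^1_\EE(-,X)$, obtained by realizing a presentation of $M$ through an injective conflation of $\EE$. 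The main obstacle throughout is the bookkeeping forced by the asymmetry: the functor category sits over the \emph{projectively} stable $\un{\EE}$ while its injectives are governed by the \emph{injectively} stable $\ov{\EE}$, so at each step one must upgrade an identity ``modulo projectives'' or ``modulo injectives'' to an honest factorization; the cleanest organizing device is to represent every object of $\mod_1\un{\EE}$ as the $\un{\EE}$-image of a genuine conflation in $\EE$ and to run the projective argument for $\EE^{\op}$ in parallel.
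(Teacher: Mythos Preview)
Your argument for weak kernels in $\un{\EE}$ is exactly the paper's: the pullback of a projective deflation $P\defl Y$ along $f$ yields the weak kernel (your $K\to X$ is the paper's $E\to Y$), and the projective half then follows from standard coherent-functor theory. The paper simply cites \cite[Theorem~2.2.2(1)]{higher} for everything else, so your writeup is supplying the omitted details; your full-faithfulness computation for $X\mapsto\Ext^1_\EE(-,X)$ via the long exact sequence is correct and is the standard one.

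The one soft spot is your sketch of injectivity. You say the extension of $M'\to\Ext^1_\EE(-,X)$ along a monomorphism $M'\hookrightarrow M$ ``unwinds to a lifting problem along the deflation $I\xrightarrow{p}X^1$ that is solved by the injectivity of $I$''; but injectivity of $I$ solves \emph{extension} problems along inflations into $I$, not lifting problems along the deflation $p$, and after unwinding the presentations it is not clear how to reorganize the data so that injectivity of $I$ becomes the operative hypothesis. The robust argument---and presumably the one in \cite{higher}---goes via duality: Lemma~\ref{defect} identifies $\mod_1\un{\EE}$ with the contravariant defect category $\DD\subset\CC_2(\EE)$, its dual identifies $\mod_1\ov{\EE}^{\op}$ with the covariant defects, and the exact duality $\Ext^2_\EE(-,\EE)$ of Lemma~\ref{e2} restricts to an equivalence $\mod_1\un{\EE}\simeq(\mod_1\ov{\EE}^{\op})^{\op}$ carrying $\Ext^1_\EE(-,X)$ (the contravariant defect of $X\infl I\defl X^1$) to the representable $\ov{\EE}(X,-)$ (its covariant defect), which is projective. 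Alternatively, staying on one side: for $M\in\DD$ with resolution $0\to P_A\to P_B\to P_C\to M\to 0$ one checks from the long exact sequence that $\Hom_{\mod_1\un{\EE}}(M,\Ext^1_\EE(-,X))\cong\Ext^2_{\Mod\EE}(M,P_X)$, and the latter is right exact on short exact sequences in $\DD$ because every object of $\DD$ has projective dimension at most $2$. Either route closes the gap; once injectivity is known, your cogeneration argument (embedding $M$ into $\Ext^1_\EE(-,A)$ via the long exact sequence of any conflation realizing $M$) finishes the proof.
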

\begin{proof}
To show that $\mod_1 \un{\EE}$ is abelian, it suffices to show that $\un{\EE}$ has weak kernels, see \cite{aus66}.
Let $g:Y \to Z$ be any morphism in $\EE$. Since $\EE$ has enough projectives, we have the following pullback diagram with $P$ being projective.
\begin{equation}\label{pbdiag}
\xymatrix{
Z' \ar@{=}[d] \inflr & E \deflr^{a} \ar[d]^{b} & Y \ar[d]^{g} \\
Z' \inflr & P \deflr^{c} & Z
}
\end{equation}
It is straightforward to check that $a$ gives the weak kernel of $\un{g} \in \un{\EE}(Y,Z)$.

The remaining assertion follows from the same argument as in \cite[Theorem 2.2.2(1)]{higher}.
\end{proof}

Now we are in position to prove our main results in this appendix, which is an exact category version of \cite[Proposition 2.2]{periodic}.
\begin{theorem}\label{confldual}
Let $\EE$ be a Hom-noetherian idempotent complete exact $R$-category with enough projectives and injectives. The following are equivalent.
\begin{enumerate}
\item The category $\EE$ has AR conflations.
\item $\un{\EE}$ is a dualizing $R$-variety.
\item $\ov{\EE}$ is a dualizing $R$-variety.
\item There exist mutually inverse equivalences $\tau: \un{\EE} \rightleftarrows \ov{\EE} : \tau^-$ such that we have natural isomorphisms $D \un{\EE}(\tau^- X,Z) \iso \Ext^1_\EE(Z,X) \iso D \ov{\EE}(X,\tau Z)$.
\end{enumerate}
In this case, $\EE$ is Ext-finite and $\un{\EE}$ and $\ov{\EE}$ are Hom-finite.
\end{theorem}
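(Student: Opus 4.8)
The plan is to transport the whole problem into the abelian category $\AAA := \mod_1 \un{\EE}$. By Lemma \ref{ff}, $\AAA$ has enough projectives and injectives, and the functors $Z \mapsto \un{\EE}(-,Z)$ and $X \mapsto \Ext^1_\EE(-,X)$ induce equivalences $\un{\EE} \equi \proj\AAA$ and $\ov{\EE} \equi \inj\AAA$; by Lemma \ref{defect}, $\AAA$ coincides with the Serre subcategory $\DD \subseteq \mod\EE$ attached to the exact structure in Theorem \ref{main}. The Yoneda lemma in $\AAA$ furnishes a natural isomorphism $\Ext^1_\EE(Z,X) \iso \Hom_\AAA(\un{\EE}(-,Z), \Ext^1_\EE(-,X))$, so $\Ext^1_\EE(-,-)$ on $\un{\EE}^{\op} \times \ov{\EE}$ is identified with $\Hom_\AAA$ restricted to $\proj\AAA \times \inj\AAA$. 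Since $\EE$ — and hence $\un{\EE}$, $\ov{\EE}$ — is Krull--Schmidt, and since the duality $\EE \leftrightarrow \EE^{\op}$ interchanges $\un{\EE}$ with $\ov{\EE^{\op}}$ while preserving the statement ``$\EE$ has AR conflations'', the implication (2) $\Leftrightarrow$ (3) is formal, and one is left with (1) $\Leftrightarrow$ (2) $\Leftrightarrow$ (4) together with the closing finiteness claim.

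The core equivalence (1) $\Leftrightarrow$ (2) rests on the following dictionary. For $Z$ indecomposable non-projective, Proposition \ref{arconfl} together with Proposition \ref{ras} and the identification $\DD = \AAA$ shows that $\EE$ has an AR conflation ending at $Z$ if and only if the simple top $\un S_Z$ of the indecomposable projective $\un{\EE}(-,Z) \in \AAA$ is finitely presented, i.e. lies in $\AAA$; applying this to $\EE^{\op}$ gives the dual statement for AR conflations starting at indecomposable non-injectives. Thus (1) amounts to a finiteness condition on the simple $\AAA$-modules coming from $\un{\EE}$ and from $\ov{\EE}$. On the other hand, because the Matlis dual $D$ is exact and $\mod_1\un{\EE}$ is abelian (Lemma \ref{ff}, via weak kernels), applying $D$ to a presentation of a finitely presented functor by representables reduces the dualizing $R$-variety condition on $\un{\EE}$ to: $\un{\EE}$ is Hom-finite, and $D\un{\EE}(-,Y)$ and $D\un{\EE}(Y,-)$ are finitely presented for all $Y$. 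Computing socles and tops of these dualized representables (e.g. $\soc D\un{\EE}(Y,-) \iso \un S_Y$), and using that simples admit injective envelopes in the Krull--Schmidt category $\AAA$, one checks that this is precisely the finiteness condition extracted from (1); this gives (1) $\Leftrightarrow$ (2).

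For (1) $\Rightarrow$ (4) and the final sentence I would first bootstrap finiteness. From (1), each simple $\un S_Z$ lies in $\AAA$ and hence embeds into an injective object of $\AAA$, which is of the form $\Ext^1_\EE(-,X)$; this forces $\un{\EE}(W,Z)$, already finitely generated over $R$ by Hom-noetherianity, to embed in an Artinian $R$-module, hence to have finite length. So $\un{\EE}$ (and dually $\ov{\EE}$) is Hom-finite, and hypothesis (ii) of Proposition \ref{ardual} holds. Defining $\tau Z$ as the first term of the AR conflation ending at $Z$ and $\tau^- X$ as the last term of the AR conflation starting at $X$ produces mutually inverse bijections on indecomposables; the ``moreover'' clause of Proposition \ref{ardual} and its dual upgrade them to mutually quasi-inverse equivalences $\un{\EE} \rightleftarrows \ov{\EE}$ fitting into natural isomorphisms $D\Ext^1_\EE(Z,-) \equi \ov{\EE}(-,\tau Z)$ and $D\Ext^1_\EE(-,X) \equi \un{\EE}(\tau^- X,-)$; dualizing these back (valid since all terms have finite length over $R$) yields the two formulas in (4) and, at the same time, the assertions that $\EE$ is Ext-finite and $\un{\EE}$, $\ov{\EE}$ are Hom-finite. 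Conversely, given (4), the isomorphism $\Ext^1_\EE(Z,X) \iso D\ov{\EE}(X,\tau Z)$ shows $\Ext^1_\EE(Z,X)$ is Artinian, while $\ov{\EE}(X,\tau Z)$ — finitely generated over $R$ and embedding, via the double-dual isomorphism for Noetherian modules over the complete local ring $R$, into $D\Ext^1_\EE(Z,X)$ — has finite length; hence $\ov{\EE}$ is Hom-finite, Proposition \ref{ardual} applies, and representability of $D\Ext^1_\EE(Z,-)$ and of its dual yields the AR conflations, so (4) $\Rightarrow$ (1).

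The principal obstacle is precisely this interlaced bootstrapping of finiteness and functoriality: since only Hom-noetherianity is assumed, none of the finite-length statements are available for free, and care is needed to extract Hom-finiteness of $\un{\EE}$ or $\ov{\EE}$ before Proposition \ref{ardual} can be brought to bear, and then to check that the ``simple-object'' finiteness produced by (1) is genuinely equivalent to the full dualizing $R$-variety condition — which uses the exactness of $D$ and the fact that finitely presented functors have representable presentations, so that control of $D$ on representables propagates to all of $\mod_1\un{\EE}$.
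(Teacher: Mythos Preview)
Your overall architecture --- work in $\AAA = \mod_1\un{\EE}$, use Lemma~\ref{ff} to identify $\un{\EE}$ and $\ov{\EE}$ with projectives and injectives of $\AAA$, and reduce the dualizing condition to control of $D$ on representables --- is the same as the paper's. But you have overlooked a basic point that the paper exploits from the first line, and this creates both unnecessary complication and a genuine gap.

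The hypotheses already include that $\EE$ has enough projectives and enough injectives. By Proposition~\ref{admissible}(1) this makes $\EE$ Ext-noetherian \emph{outright}, so alternative (i) of Proposition~\ref{ardual} is available from the start. The paper therefore invokes Proposition~\ref{ardual} immediately: under (1) it gives, for any AR conflation $X \infl Y \defl Z$, that $\ov{\EE}(W,X)$ and $\Ext^1_\EE(Z,W)$ have finite $R$-length and that $D\Ext^1_\EE(Z,-) \iso \ov{\EE}(-,X)$ naturally. This single stroke delivers Hom-finiteness of $\un{\EE}$ and $\ov{\EE}$, Ext-finiteness, and the representability statements needed for (2) and (4), with no ``interlaced bootstrapping'' at all. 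Your concern that ``none of the finite-length statements are available for free'' is thus misplaced: the one you need \emph{is} free, via Ext-noetherianity.

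Because you miss this, your substitute bootstrap has a real gap. You write that ``each simple $\un S_Z$ lies in $\AAA$ and hence embeds into an injective object $\Ext^1_\EE(-,X)$; this forces $\un{\EE}(W,Z)$ \ldots\ to embed in an Artinian $R$-module''. But an embedding of the simple top $\un S_Z$ into an injective says nothing about an embedding of the projective cover $\un{\EE}(-,Z)$ into anything; there is no map in that direction, and without already knowing finite length you cannot filter $\un{\EE}(-,Z)$ by such simples either. So this step does not go through. Similarly, your (1) $\Leftrightarrow$ (2) via ``computing socles and tops of dualized representables'' is too sketchy to stand on its own, whereas the paper handles (1) $\Rightarrow$ (2) concretely: for $F \in \mod_1\un{\EE}$ it arranges a conflation $X \infl Y \defl Z$ presenting $F$, obtains an exact sequence $0 \to F \to \Ext^1_\EE(-,X) \to \Ext^1_\EE(-,Y)$, and then applies $D$ together with the representability $D\Ext^1_\EE(-,X) \iso \un{\EE}(\tau^- X,-)$ coming from Proposition~\ref{ardual}; for $G \in \mod_1\un{\EE}^{\op}$ it uses the dual identification $D\un{\EE}(A,-) \iso \Ext^1_\EE(-,\tau A)$. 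The implication (2) $\Rightarrow$ (4) is then read off from Lemma~\ref{ff} exactly as you suggest, and (4) $\Rightarrow$ (1) is immediate from Proposition~\ref{ardual}(3)$\Rightarrow$(2), again because Ext-noetherianity is already in hand.
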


\begin{proof}
(1) $\Rightarrow$ (2):
First note that $\un{\EE}$ is Hom-finite by Proposition \ref{ardual}. Let $F\in \mod_1 \un{\EE}$ be a finitely presented functor. We have an exact sequence
$
\un{\EE}(-,Y) \to \un{\EE}(-,Z) \to F \to 0
$ in $\Mod \un{\EE}$, which is induced from $g:Y \to Z$ in $\EE$ by the Yoneda lemma.
Consider the diagram (\ref{pbdiag}). It is standard that the right square gives a conflation $E \xrightarrow{^t[b,-a]} P \oplus Y \xrightarrow{[c,g]} Z$ (see e.g. \cite[Proposition 2.12]{buhler}). Hence by replacing $g$ by $[c,g]$, we may assume that $X \xrightarrow{f} Y \xrightarrow{g} Z$ is a conflation with
$
\un{\EE}(-,Y) \xrightarrow{(-)\circ g} \un{\EE}(-,Z) \to F \to 0$
 being exact.

By the long exact sequence of Ext,
\[
\un{\EE}(-,Y) \xrightarrow{(-)\circ g} \un{\EE}(-,Z) \to \Ext^1_\EE(-,X) \xrightarrow{\Ext^1_\EE(-,f)}  \Ext^1_\EE(-,Y)
\]
 is exact in $\Mod \un{\EE}$. Thus we have an exact sequence \[
 0 \to F \to \Ext^1_\EE(-,X) \xrightarrow{\Ext^1_\EE(-,f)}  \Ext^1_\EE(-,Y).\]
Since $D :\Mod\un{\EE} \to \Mod \un{\EE}^{\op}$ is a contravariant exact functor, we obtain an exact sequence
\[
D \Ext^1_\EE(-,Y) \to D \Ext^1_\EE(-,X) \to D F \to 0.\]
By (1) and Proposition \ref{ardual}, the functors $D \Ext^1_\EE(-,Y)$ and $D \Ext^1_\EE(-,X)$ are representable, which shows that $D F$ is finitely presented.

Next suppose that $G \in \mod_1 \un{\EE}^{\op}$ is a finitely presented functor. We have an exact sequence
$
\un{\EE}(B,-) \to \un{\EE}(A,-) \to G \to 0$
 in $\Mod\un{\EE}^{\op}$. Applying $D$, we obtain an exact sequence
$
 0 \to DG \to D \un{\EE}(A,-) \to D \un{\EE}(B,-)
$
 in $\Mod \un{\EE}$. We will show that $D \un{\EE}(A,-) \equi \Ext^1_\EE(-,\tau A)$ for some $\tau A$ in $\EE$. Without loss of generality, we may assume that $A$ has no non-zero projective summands. Then we have a direct sum of AR conflations $\tau A \infl E \defl A$ in $\EE$. By Proposition \ref{ardual}, $D \un{\EE}(A,-) \equi \Ext^1_\EE(-,\tau A)$ holds. Thus we obtain the following exact sequence in $\Mod \un{\EE}$.
  \[0 \to DG \to \Ext^1_\EE(-,\tau A) \to \Ext ^1_\EE(-,\tau B)\]
Since $\un{\EE}$ has weak kernels by Lemma \ref{ff}, the subcategory $\mod_1 \un{\EE}$ is closed under kernels in $\Mod\un{\EE}$ (see \cite{aus66}). Also Lemma \ref{ff} shows that $\Ext^1_\EE(-,\tau A)$ is finitely presented. Thus $DG$ is also finitely presented.

(2) $\Rightarrow$ (4):
Since $\un{\EE}$ is a dualizing $R$-variety, we have a duality $D:\mod_1 \un{\EE}^{\op} \equi \mod_1 \un{\EE}$ between abelian categories. It induces a duality $D: \PP \equi \II$, where $\PP$ (resp. $\II$) denotes the category of projective objects in $\mod_1 \un{\EE}$ (resp. injective objects in $\mod_1 \un{\EE}^{\op}$).
By Lemma \ref{ff}, we have a equivalence $\II \equi \ov{\EE}$ and the contravariant Yoneda embedding $\un{\EE} \equi \PP$. Define the equivalence $\tau$ by the composition $\un{\EE} \equi \PP \equi \II \equi \ov{\EE}$ and denote by $\tau^-$ its quasi-inverse. Then it follows from Lemma \ref{ff} that (4) holds.

(4) $\Rightarrow$ (1):
Obvious from Proposition \ref{ardual}.

By duality, (3) is also equivalent to all the other conditions.
\end{proof}

This theorem gives an application about the relation between AR conflations and isolated singularities shown in \cite{isolated}. For a complete Cohen-Macaulay local ring $R$, we say that an $R$-order $\Lambda$ \emph{has at most an isolated singularity} if $\gl \Lambda\otimes_R R_\mathfrak{p} = \height \mathfrak{p}$ holds for every non-maximal prime ideal $\mathfrak{p}$ of $R$.
It is well-known that an $R$-order $\Lambda$ has at most an isolated singularity if and only if $\un{\CM}\: \Lambda$ is Hom-finite.
\begin{remark}\label{isolatedrmk}
Suppose that $\CM \Lambda$ has AR conflations. Then $\un{\CM}\: \Lambda$ is a dualizing $R$-variety by Theorem \ref{confldual}, hence in particular $\un{\CM}\: \Lambda$ is Hom-finite, and therefore $\Lambda$ has at most an isolated singularity. This gives a simple conceptual proof of the ``only if'' part of the main theorem of \cite{isolated}; $\CM \Lambda$ has AR conflations if and only if $\Lambda$ has at most an isolated singularity.
\end{remark}

\medskip\noindent
{\bf Acknowledgement.}
The author would like to express his deep gratitude to his supervisor Osamu Iyama for his support and many helpful comments, especially about noetherian algebras and orders. The author also thanks the anonymous referee for several helpful comments and suggestions. This work is supported by JSPS KAKENHI Grant Number JP18J21556.


\begin{thebibliography}{199}

 \bibitem[Am]{amiot}
 C. Amiot, \emph{On the structure of triangulated categories with finitely many indecomposables}, Bull. Soc. Math. France 135 (2007), no. 3, 435--474.

 \bibitem[Ar]{artin}
 M. Artin, \emph{Maximal orders of global dimension and Krull dimension two}, Invent. Math. 84 (1986), no. 1, 195--222.

 \bibitem[Au1]{aus66}
 M. Auslander, \emph{Coherent functors},
 Proc. Conf. Categorical Algebra (La Jolla, Calif., 1965) 189--231 Springer, New York.

 \bibitem[Au2]{repdim}
 M. Auslander, \emph{Representation dimension of Artin algebras}, Queen Mary College Mathematics Notes, Queen Mary College, London, 1971.

 \bibitem[Au3]{artin2}
 M. Auslander, \emph{Representation theory of Artin algebras II}. Comm. Algebra 1 (1974), 269--310.

 \bibitem[Au4]{isolated}
 M. Auslander, \emph{Isolated singularities and existence of almost split sequences}, Representation theory, II (Ottawa, Ont., 1984), 194--242, Lecture Notes in Math., 1178, Springer, Berlin, 1986.

 \bibitem[AB]{mcm}
 M. Auslander, R-O. Buchweitz, \emph{The homological theory of maximal Cohen-Macaulay approximations}, M\'em. Soc. Math. France (N.S.) No. 38 (1989), 5--37.
 \bibitem[AR1]{stable}
 M. Auslander, I. Reiten, \emph{Stable equivalence of dualizing R-varieties}, Adv. Math. 12 (1974), 306--366.

 \bibitem[AR2]{ar1}
 M. Auslander, I. Reiten, \emph{Grothendieck groups of algebras and orders},
 J. Pure Appl. Algebra 39 (1986), no. 1--2, 1--51.

 \bibitem[AR3]{applications}
 M. Auslander, I. Reiten, \emph{Applications of contravariantly finite subcategories}, Adv. Math. 86 (1991), no. 1, 111--152.

 \bibitem[AR4]{periodic}
 M. Auslander, I. Reiten, \emph{$D$ Tr-periodic modules and functors}, Representation theory of algebras (Cocoyoc, 1994), 39--50, CMS Conf. Proc., 18, Amer. Math. Soc., Providence, RI, 1996.

 \bibitem[ARS]{ars}
 M. Auslander, I. Reiten, S. Smal\o,
 \emph{Representation Theory of Artin Algebras}.
 Cambridge Studies in Advanced Mathematics 36,
 Cambridge University Press, Cambridge, 1995.

 \bibitem[BGRS]{bgrs}
 R. Bautista, P. Gabriel, A. V. Ro\u{i}ter, L. Salmer\'on,
 \emph{Representation-finite algebras and multiplicative bases}, Invent. Math. 81 (1985), no. 2, 217--285.

 \bibitem[BH]{bh}
 W. Bruns, J. Herzog, \emph{Cohen-Macaulay rings}, Cambridge Studies in Advanced Mathematics, 39. Cambridge University Press, Cambridge, 1993.

 \bibitem[Bua]{bu}
 A.B. Buan, \emph{Closed Subbifunctors of the Extension Functor}, J. Algebra 244 (2001), no. 2, 407--428.

 \bibitem[But]{but}
 M. C. R. Butler, \emph{Grothendieck groups and almost split sequences}, Lecture Notes in Math., 882, Springer, Berlin-New York, 1981.

 \bibitem[B\"u]{buhler}
 T. B\"uhler, \emph{Exact categories}, Expo. Math. 28 (2010), no. 1, 1--69.

 \bibitem[DK]{dk}
 J.A. Drozd, V.V. Kiri\v{c}enko,
 \emph{Primary orders with a finite number of indecomposable representations},
 Izv. Akad. Nauk SSSR Ser. Mat. 37 (1973), 715--736.

 \bibitem[En]{en}
 H. Enomoto, \emph{Classifying exact categories via Wakamatsu tilting}, J. Algebra 485 (2017), 1--44.

 \bibitem[Ga1]{gabriel}
 P. Gabriel, \emph{Unzerlegbare Darstellungen. I},
 Manuscripta Math. 6 (1972), 71--103.

 \bibitem[Ga2]{gabind}
 P. Gabriel, \emph{Indecomposable representations. II}, Symposia Mathematica, Vol. XI (Convegno di Algebra Commutativa, INDAM, Rome, 1971), pp. 81--104. Academic Press, London, 1973.

 \bibitem[GR]{gr}
 P. Gabriel, A. V. Ro\u{i}ter,
 \emph{Representations of finite-dimensional algebras},
 Encyclopaedia Math. Sci., 73, Algebra, VIII, 1--177, Springer, Berlin, 1992.

 \bibitem[GN1]{gn}
 S. Goto, K. Nishida, \emph{Finite modules of finite injective dimension over a Noetherian algebra}, J. London Math. Soc. (2) 63 (2001), no. 2, 319--335.

 \bibitem[GN2]{towards}
 S. Goto, K. Nishida, \emph{Towards a theory of Bass numbers with application to Gorenstein algebras}, Colloq. Math. 91 (2002), no. 2, 191--253.

 \bibitem[HN]{hn}
 H. Hijikata, K. Nishida,
 \emph{Primary orders of finite representation type},
 J. Algebra 192 (1997), no. 2, 592--640.

 \bibitem[HH]{hh}
 C. Huang, Z. Huang,
 \emph{Torsionfree dimension of modules and self-injective dimension of rings}, Osaka J. Math. 49 (2012), no. 1, 21--35.


 \bibitem[Iy1]{tau3}
 O. Iyama, \emph{$\tau$-categories III: Auslander orders and Auslander-Reiten quivers},
 Algebr. Represent. Theory 8 (2005), no. 5, 601--619.

 \bibitem[Iy2]{tau4}
 O. Iyama, \emph{The relationship between homological properties and representation theoretic realization of Artin algebras},
 Trans. Amer. Math. Soc. 357 (2005), no. 2, 709--734.

 \bibitem[Iy3]{higher}
 O. Iyama, \emph{Auslander correspondence}, Adv. Math. 210 (2007), no. 1, 51--82.

 \bibitem[INP]{in}
 O. Iyama, H. Nakaoka, Y. Palu, \emph{Auslander--Reiten theory in extriangulated categories}, arXiv:1805.03776.

 \bibitem[Ji]{jiao}
 P. Jiao, \emph{The generalized Auslander-Reiten duality on an exact category}, arXiv:1609.07732.

 \bibitem[Ka]{kalck}
 M. Kalck, \emph{A remark on Leclerc’s Frobenius categories}, Homological bonds between Commutative Algebra and Representation
Theory, Research Perspectives CRM Barcelona.

 \bibitem[Ki]{kimura}
 Y. Kimura, \emph{Singularity categories of derived categories of hereditary algebras are derived categories}, arXiv:1702.04550.

 \bibitem[Kr]{krause}
 H. Krause, \emph{Krull-Schmidt categories and projective covers}, Expo. Math. 33 (2015), no. 4, 535--549.

 \bibitem[LW]{lw}
 G. Leuschke, R. Wiegand,
 \emph{Cohen-Macaulay representations},
Mathematical Surveys and Monographs, 181. American Mathematical Society, Providence, RI, 2012.

 \bibitem[LNP]{lnp}
 S. Liu, P. Ng, C. Paquette,
 \emph{Almost Split Sequences and Approximations}, Algebr. Represent. Theory 16 (2013), no. 6, 1809--1827.

 \bibitem[Qu]{qu}
 D. Quillen, \emph{Higher algebraic K-theory: I},
 Lecture Notes in Math., Vol. 341, Springer, Berlin 1973.

 \bibitem[RVdB1]{rvdb}
 I. Reiten, M. Van den Bergh,
 \emph{Noetherian hereditary abelian categories satisfying Serre duality}, J. Amer. Math. Soc. 15 (2002), no. 2, 295--366.

 \bibitem[RVdB2]{rvdb2}
 I. Reiten, M. Van den Bergh,
 \emph{Two-dimensional tame and maximal orders of finite representation type.}, Mem. Amer. Math. Soc. 80 (1989), no. 408, viii+72 pp.

 \bibitem[Ri]{riedtmann}
 C. Riedtmann, \emph{Algebren, Darstellungsk\"ocher, \"Uberlagerungen und zur\"uck}, Comment. Math. Helv. 55 (1980), no. 2, 199--224.

 \bibitem[Ru1]{rump}
 W. Rump, \emph{A counterexample to Raikov's conjecture}, Bull. Lond. Math. Soc. 40 (2008), no. 6, 985--994.

 \bibitem[Ru2]{rump2}
 W. Rump, \emph{On the maximal exact structure on an additive category}, Fund. Math. 214 (2011), no. 1, 77--87.

 \bibitem[Sa]{sand}
 F. L. Sandomierski, \emph{Modules over the endomorphism ring of a finitely generated projective module}, Proc. Amer. Math. Soc. 31 1972 27--31.

 \bibitem[Yo]{yo}
 Y. Yoshino, \emph{Cohen-Macaulay modules over Cohen-Macaulay rings},
 London Mathematical Society Lecture Note Series 146, Cambridge University Press, Cambridge, 1990.

 \bibitem[Za]{za}
 A. Zaks, \emph{Injective dimension of semi-primary rings},
J. Algebra 13 1969 73--86.





\end{thebibliography}
\end{document}